\documentclass[12pt]{amsart}
\usepackage[latin9]{inputenc}
\usepackage{color}
\usepackage{cancel}
\usepackage{amsbsy}
\usepackage{amstext}
\usepackage{amsthm}
\usepackage{amssymb}
\usepackage{mathtools}
\PassOptionsToPackage{normalem}{ulem}
\usepackage{ulem}

\makeatletter
\numberwithin{equation}{section}
\numberwithin{figure}{section}


\usepackage{tikz-cd}
\usepackage{fancyhdr}
\usepackage{latexsym}
\usepackage{amscd}\usepackage{amsfonts}\usepackage{pstricks}
\usepackage{young}
\usepackage{enumitem}
\usepackage{amsthm}
\usepackage{mathrsfs}

\usepackage{ifthen}
\usepackage{longtable}
\usepackage{todonotes}
\usepackage{marginnote}


\setlength{\oddsidemargin}{0in}
\setlength{\evensidemargin}{0in}
\setlength{\footskip}{.3in}   
\setlength{\textwidth}{16cm}

\renewcommand{\subsection}[1]{\vspace{3mm}\refstepcounter{subsection}\noindent{\bf \thesubsection. #1.} }

\renewcommand{\subsubsection}[1]{\vspace{3mm}\refstepcounter{subsubsection}\noindent{\bf \thesubsubsection. #1.} }

\numberwithin{equation}{section}



\newtheorem{theorem}{Theorem}
\newtheorem{lemma}[theorem]{Lemma}
\newtheorem{corollary}[theorem]{Corollary}
\newtheorem{proposition}[theorem]{Proposition}

\theoremstyle{definition}

\theoremstyle{remark*}
\newtheorem*{remark*}{Remark}

\newtheorem*{example*}{Example}

\newtheorem*{tha}{Theorem A}
\newtheorem*{thb}{Theorem B}

\def\CC{\mathbb C}
\def\PP{\mathbb P}

\DeclareMathOperator{\GL}{GL}

\def\min{\mathop{\mathrm{min}}}
\def\CC{\mathbb C}

\def\PP{\mathbb P}


\newcommand{\bfi}{\mathbf{i}}

\newcommand{\bfx}{\mathbf{x}}

\newcommand{\bfv}{\mathbf{v}}

\newcommand{\bft}{\mathbf{t}}
\newcommand{\exc}{\operatorname{exc}}

\makeatother

\begin{document}
\title[  Defect relation of $n+1$ components through the GCD method]{ Defect of $n+1$ components  through the GCD method}
 
\author{Min Ru}
\address{
 Department  of Mathematics\newline
\indent University of Houston\newline
\indent Houston,  TX 77204, U.S.A.} 
\email{minru@math.uh.edu}
\author{Julie Tzu-Yueh Wang}
\address{Institute of Mathematics, Academia Sinica \newline
\indent No.\ 1, Sec.\ 4, Roosevelt Road\newline
\indent Taipei 10617, Taiwan}
\email{jwang@math.sinica.edu.tw}

\thanks{2020\ {\it Mathematics Subject Classification}: Primary 30D35; Secondary 32Q45 and 32H30}
\thanks{The research of Min Ru is supported in part by Simon Foundations grant award \#531604 and \#521604.
The  second-named author was supported in part by Taiwan's NSTC grant  110-2115-M-001-009-MY3.}

\begin{abstract} This paper studies the defect relation through the GCD method.  In particular, among other results, we extend the defect relation result of   Chen,  Huynh, Sun and  Xie (\cite{CHSX})
to  moving targets. The truncated defect relation is also studied.  Furthermore, we obtain the degeneracy locus, which can be  determined effectively and is independent of the maps under the consideration.
 \end{abstract}

\maketitle
\baselineskip=16truept

 
\section{Motivations}\label{sec:intro}
  
  Let $f: {\Bbb C}\rightarrow {\Bbb P}^n({\Bbb C})$ be a holomorphic map.
   It is known that (see \cite{ru93}, \cite{NW02}, \cite{ru04}) if  $f({\Bbb C})$ omits  $n+2$ smooth hypersurfaces $D_j, 1\leq j\leq n+2$, 
  of ${\Bbb P}^n({\Bbb C})$, where $D:=D_1+\cdots+D_{n+2}$ is located  in general position, then 
  $f$ must be algebraically degenerate (i.e. $f({\Bbb C})$ is contained in a proper sub-variety of  ${\Bbb P}^n({\Bbb C})$). In \cite{NWY07}, J. Noguchi, J. Winkelmann, and K. Yamanoi showed that the number $n+2$ 
  of the omitting hypersurfaces indeed could be reduced to $n+1$ when $\deg D\ge n+2$. 
  Their proof relies on their earlier result for  holomorphic maps  from ${\Bbb C}$ in the semi-abelian variety $A:=({\Bbb C}^*)^{n}$, which is stated as follows.
   \begin{tha}[\cite{NWY08}]\label{Noguchi}  Let $D$ be an effective divisor on $A:=({\Bbb C}^*)^{n}$.
   Let $f : {\Bbb C}\rightarrow A$ be an algebraically nondegenerate holomorphic map. 
   Then there exists a smooth compactification of $A$ independent of $f$, such that
\begin{equation}\label{N1}
N_{f}(D, r)-  N_{f}^{(1)}(D, r)\leq_{\exc} \epsilon\, T_{f, \overline{D}}(r) 
\end{equation}
for any $\epsilon >0$.
\end{tha}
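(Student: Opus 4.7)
The plan is to lift $f$ to the $k$-jet bundle on a smooth equivariant compactification of $A$, construct a logarithmic jet differential vanishing along $\overline{D}$ to high order, and then invoke the Lemma on Logarithmic Derivatives (LLD). The starting point is that on the group $A=(\mathbb{C}^*)^n$ the invariant derivations $z_i\,\partial/\partial z_i$ simultaneously trivialize the logarithmic cotangent bundle and all higher jet bundles, so Nevanlinna-theoretic quantities for $f$ can be expressed directly in terms of logarithmic derivatives of the coordinate functions.

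First I would fix a smooth equivariant (toric) compactification $\overline{A}$ of $A$ in which the closure $\overline{D}$ together with the boundary $\partial\overline{A}:=\overline{A}\setminus A$ forms a simple normal crossing divisor. Such $\overline{A}$ can be read off from the Newton polytope of a Laurent defining equation of $D$, and so depends only on $D$, not on $f$. I would then lift $f$ to its $k$-jet $J_k f:\mathbb{C}\to J_k\overline{A}$; because $A$ is a commutative algebraic group, $J_k\overline{A}|_A$ is canonically trivial and $J_k f$ is encoded by the higher logarithmic derivatives of the coordinates of $f$.

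The core step is to construct, for any prescribed $\epsilon>0$, a nonzero global logarithmic jet differential $\omega$ on $\overline{A}$, of order $\le k$ and weight $m=m(\epsilon)$, that vanishes along $\overline{D}$ to order at least $(1-\epsilon)m$ and satisfies $(J_k f)^*\omega\not\equiv 0$. Non-vanishing of the pullback is precisely where algebraic non-degeneracy of $f$ is used: via the invariant-jet trivialization, universal vanishing of every such $\omega$ on $J_k f$ would force $f$ to satisfy a non-trivial algebraic relation in $A$. Existence of $\omega$ with the prescribed vanishing/pole data is an asymptotic Riemann--Roch input on the smooth toric $\overline{A}$. With $\omega$ in hand, I would apply the First Main Theorem to the meromorphic function $(J_k f)^*\omega$ on $\mathbb{C}$ and bound the proximity terms: the logarithmic-derivative contribution is $o(T_{f,\overline{D}}(r))$ by the LLD applied coordinatewise, while the boundary contribution from $\partial\overline{A}$ is absorbed into $\epsilon\, T_{f,\overline{D}}(r)$ by the choice of $\overline{A}$. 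A local analysis shows that each point where $f^*\overline{D}$ has multiplicity $\mu\ge 2$ contributes at least $(1-\epsilon)m(\mu-1)$ to the zero divisor of $(J_k f)^*\omega$, and dividing by $m$ yields the required bound on $N_f(D,r)-N_f^{(1)}(D,r)$.

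The main obstacle is the jet-differential construction: one needs sufficient vanishing along $\overline{D}$ without killing $(J_k f)^*\omega$ for every algebraically non-degenerate $f$, a situation not covered by naive Riemann--Roch. The expected route is induction on $\dim A$, stratifying $f$ by the smallest translate of an algebraic subgroup $B\subset A$ containing its image and refining $\overline{A}$ (still depending only on $D$) adapted to each stratum; ensuring that the compactification works uniformly for all $f$, while remaining independent of $f$, is the delicate bookkeeping that makes the theorem hard.
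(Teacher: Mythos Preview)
The paper does not prove Theorem~A at all: it is quoted from \cite{NWY08} as background, and the whole point of the present paper is to \emph{avoid} relying on it. The authors write explicitly in Section~\ref{sec:intro} that ``We don't use Theorem~A. Indeed, we give and prove a variant and more general version of Theorem~A by using the GCD theorem.'' The replacement they prove is Theorem~\ref{main_thm_1GExc}, whose inequality \eqref{multizerobdd} plays the role of \eqref{N1}; its proof goes through the GCD theorem (Theorem~\ref{Mgcdunit}) together with the derivation operator $D_{\mathbf u}$ of \eqref{Duexpression}, not through jet bundles or equivariant compactifications. So there is no ``paper's own proof'' of Theorem~A for your proposal to be compared against.

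As for the proposal itself: what you outline is, at the level of strategy, the Noguchi--Winkelmann--Yamanoi approach in \cite{NWY08} (logarithmic jet differentials on a smooth equivariant compactification, trivialization by invariant vector fields, LLD to kill the error terms, induction on $\dim A$ via the smallest semi-abelian subvariety containing the image). Your sketch is honest about the genuine difficulty---producing $\omega$ with enough vanishing along $\overline D$ while keeping $(J_kf)^*\omega\not\equiv 0$ uniformly in $f$ and with a compactification independent of $f$---and you correctly flag that naive Riemann--Roch does not suffice. That is exactly the hard core of \cite{NWY08}, and your outline does not resolve it; so as a self-contained proof the proposal has a real gap at that step. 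But since the present paper neither attempts nor needs a proof of Theorem~A, the relevant comparison is simply that the paper takes a completely different route (GCD method, moving targets, effective exceptional loci) to a variant of the same inequality.
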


Using the above theorem, Noguchi, Winkelmann, and  Yamanoi showed that one can reduce the number of omitting divisors by one (i.e., from $n+2$ to $n+1$) whose 
argument  is similar to ours described  in Section 5.  We briefly outline the argument here: 
 Assume that $D_j=\{Q_j=0\}$ and $f({\Bbb C})$ omits $D_j$ for $1\leq j\leq n+1$. Consider a morphism $\pi:\PP^{n}(\Bbb C)\to\PP^{n}(\Bbb C)$
 given by
 {${\mathbf x}\mapsto[Q_{1}^{a_{1}}({\mathbf x}):\hdots:Q_{n+1}^{a_{n+1}}({\mathbf x})]$,
where $a_{i}\coloneqq{\rm lcm}(\deg Q_{1},\hdots,\deg Q_{n+1})/\deg Q_{i}$.
  Let 
\begin{align*}
G:=\det \left(\frac{\partial Q_i }{\partial x_j}\right)_{1\le i\le n+1, 0\le j\le n}\in {\Bbb C}[x_{0},\hdots,x_{n}].
 \end{align*} 
 By taking out  a nonconstant irreducible factor $\tilde G$ of $G$  in $K[x_{0},\hdots,x_{n}]$, one produces an additional hypersurface ${\tilde D}_{n+2}=\{\tilde G=0\}$ in $\PP^{n}(\Bbb C)$.
 Furthermore, one can show that  $ D_1, \dots, D_{n+1},$ $\tilde D_{n+2}$ are located in general position, and, by using (\ref{N1}), one can show that $N_f({\tilde D}_{n+2}, r) \leq_{\exc} \epsilon\, T_f(r)$. Thus we can apply the Second Main Theorem obtained by the first author (\cite{ru04}) to get the conclusion.

In a recent manuscript by Z. Chen, D.T. Huynh, R. Sun and S.Y. Xie (see \cite{CHSX}), the result of Noguchi-Winkelmann-Yamanoi mentioned above was further extended to the following  about the defect relation.
\begin{thb}[Z. Chen, D.T. Huynh, R. Sun and S.Y. Xie, \cite{CHSX} ] Let $\{D_i\}_{i=1}^{n+1}$ be $n+1$ hypersurfaces with total degrees $\sum_{i=1}^{n+1} \deg(D_i)\ge n+2$ 
satisfying one precised generic condition $($see (4.3) in their paper  \cite{CHSX}$)$. Then, for every algebraically non-degenerate entire holomorphic curve $f: 
{\Bbb C}\rightarrow {\Bbb P}^n({\Bbb C})$, the following defect relation holds:
$$\sum_{i=1}^{n+1} \delta_f(D_j)<n+1.$$
\end{thb}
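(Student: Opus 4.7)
The plan is to extend the Noguchi--Winkelmann--Yamanoi argument (sketched in the excerpt for the omitting case) to the defect setting by (i) producing an $(n+2)$-nd auxiliary hypersurface $\tilde D_{n+2}$ from the Jacobian, (ii) showing via Theorem~A that $\tilde D_{n+2}$ has positive defect for $f$, and (iii) invoking Ru's Second Main Theorem~\cite{ru04} on the enlarged configuration.

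For step (i) I would follow the excerpt verbatim: set $d_i=\deg Q_i$, $d=\operatorname{lcm}(d_1,\ldots,d_{n+1})$, $a_i=d/d_i$, form the finite morphism $\pi(\bfx)=[Q_1^{a_1}:\cdots:Q_{n+1}^{a_{n+1}}]$, take an irreducible factor $\tilde G$ of the Jacobian $G=\det(\partial Q_i/\partial x_j)$ (nonconstant because $\deg G=(\sum d_i)-(n+1)\ge 1$ by hypothesis), and put $\tilde D_{n+2}=\{\tilde G=0\}$. The precised generic condition of~\cite{CHSX} forces $D_1,\ldots,D_{n+1},\tilde D_{n+2}$ into general position, so Ru's Second Main Theorem applied to this $(n+2)$-tuple yields
\begin{equation*}
\sum_{i=1}^{n+1}\delta_f(D_i)+\delta_f(\tilde D_{n+2})\le n+1,
\end{equation*}
reducing the theorem to proving $\delta_f(\tilde D_{n+2})>0$.

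For step (ii) I would consider $g=\pi\circ f:\CC\to\PP^n(\CC)$, viewing the target as the smooth compactification of the semi-abelian variety $A=(\CC^*)^n$ whose boundary consists of the target coordinate hyperplanes $H_1,\ldots,H_{n+1}$. Since $\pi$ is finite and $f$ is algebraically non-degenerate in $\PP^n(\CC)$, the map $g$ is algebraically non-degenerate in~$A$, so Theorem~A applies to $g$ and any divisor on the target. Using the pull-back relations $\pi^*H_i=a_iD_i$ (hence $T_g(r)=d\,T_f(r)+O(1)$ and $N_g(H_i,r)=a_iN_f(D_i,r)$), together with the ramification formula $g'=d\pi|_f\cdot f'$ that ties zeros of $\tilde G\circ f$ to excess intersections of $g$ with appropriate divisors, the Theorem~A estimate delivers a GCD-type bound of the form $N_f(\tilde D_{n+2},r)\le_{\exc}\epsilon\,T_f(r)$, so $\delta_f(\tilde D_{n+2})=1$. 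Combined with the inequality above, this gives $\sum_{i=1}^{n+1}\delta_f(D_i)\le n<n+1$.

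The main obstacle is the GCD-type extraction in step~(ii): one must disentangle the two sources of excess in the NWY inequality for $g$, namely the ``expected'' ramification of $\pi$ of order $a_i-1$ along each $D_i$ (accounted for directly by the multiplicity of $\pi^*H_i$) and the ``extra'' ramification along $\tilde D_{n+2}$ that one wishes to extract. This requires a careful choice of auxiliary divisor on the target $\PP^n(\CC)$ on which to apply Theorem~A (for instance the image $\pi(\tilde D_{n+2})$ together with suitable multiples of the $H_i$), a precise chain-rule analysis of how $\ord_z\tilde G(f(z))$ feeds into the ramification of $g$ at $z$, and invocation of the precised generic condition of~\cite{CHSX} to control points where $\tilde D_{n+2}$ meets $\bigcup_j D_j$ so that the excess estimate does not leak through those intersections. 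This is where the bulk of the technical work lies.
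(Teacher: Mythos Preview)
Your plan contains a genuine gap at the point where you invoke Theorem~A. That theorem is stated for holomorphic maps $g:\CC\to A=(\CC^*)^n$, i.e.\ maps whose image \emph{avoids} the coordinate hyperplanes. Your $g=\pi\circ f$ lands in $\PP^n(\CC)$, and it hits the hyperplane $H_i$ exactly where $f$ hits $D_i$; since you have made no assumption on the $D_i$'s at this stage, $g$ does not map into $(\CC^*)^n$ and Theorem~A is simply unavailable. The ``GCD-type extraction'' you flag as the main obstacle is not the real issue; the real issue is that the input hypothesis of Theorem~A fails outright. Relatedly, your target conclusion $\delta_f(\tilde D_{n+2})=1$ for \emph{every} algebraically nondegenerate $f$ cannot be correct: via Ru's Second Main Theorem it would force $\sum_{i=1}^{n+1}\delta_f(D_i)\le n$ unconditionally, which is already false for simple explicit curves once you pick $\tilde D_{n+2}$ appropriately.

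The approach actually taken in \cite{CHSX} (as described in the paper) differs from yours in two essential ways. First, it is a proof by contradiction: one assumes $\sum_{i=1}^{n+1}\delta_f(D_i)=n+1$, which forces $N_f(D_i,r)={\rm o}(T_f(r))$ for every $i$. This is still weaker than omitting, so $g$ still does not land in $(\CC^*)^n$, and Theorem~A still does not apply on the nose. Second, and this is the crux, they bridge that gap by passing to the open parabolic Riemann surface $Y=\CC\setminus f^{-1}(D)$ and using the P\u{a}un--Sibony parabolic Nevanlinna theory: on $Y$ the restricted map genuinely omits $D$, so one is back in the setting of Theorem~A, and the key technical point is showing that the Euler-characteristic term $\mathfrak{X}_\sigma(r)$ of the parabolic exhaustion satisfies $\mathfrak{X}_\sigma(r)/T_f(r)\to 0$, which is exactly what the smallness of $N_f(D_i,r)$ buys. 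Your outline has no analogue of either the contradiction setup or the parabolic passage, and without them step~(ii) cannot be carried out.
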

Note that, in the omitting case, we have that $Q_j(f)$ is nowhere zero for all $1\leq j\leq n+1$ where $D_j=\{Q_j=0\}$ so that one can reduce it to the semi-abelian  variety case $({\Bbb C}^*)^n$ by 
considering $F:=[Q_1(f), \dots, Q_{n}(f)]\in ({\Bbb C}^*)^n$, assuming that $\deg(Q_1)=\cdots=\deg(Q_{n+1})$. Hence Theorem A could be applied directly. 
However, in their ``proof-by-contradiction" argument of the proof of Theorem B, the condition that $\sum_{i=1}^{n+1} \delta_f(D_j)=n+1$ only implies  that  $N_f(r, D_i)={\rm o}(T_f(r))$ (rather than $f({\Bbb C})$ omitting $D_j$). To overcome this difficulty, they used the ``parabolic Nevanlinna theory"
developed by M. P${\rm{\breve{a}}}$un and N. Sibony (see \cite{paun2014value}), by considering the holomorphic mapping $f: Y\rightarrow {\Bbb P}^n({\Bbb C})$  with $Y:={\Bbb C}\backslash f^{-1}(D)$, which leads to the omitting case after  restricting $f$ to $Y$. The key ingredient in their paper is to show that $Y$ is an open parabolic Riemann surface with exhaustion function $\sigma$ satisfying 
$$\limsup_{r\rightarrow \infty} {\mathfrak{X}_{\sigma}(r)\over T_f(r)}=0.$$
Note that while the method of Chen-Huynh-Sun-Xie is very interesting and creative, it still relies on the result of  Noguchi, Winkelmann, and  Yamanoi (Theorem A), which greatly depends on the geometry of semi-abelian varieties.   For example, it is very hard to generalize the result to the moving target case. 

This paper studies the defect relation through the GCD method.  We don't use Theorem A. Indeed, we give and prove a variant and more general version of Theorem A by using the GCD theorem established by 
Aaron Levin and  the second author \cite{levin2019greatest}.  This allows us to get a much more general defect relation (for example the moving target case).   The method was initiated by P. Corvaja and U. Zannier (see \cite{CZ}), where they studied  the $n=2$ case. 
After Aaron Levin and the second author \cite{levin2019greatest}  established  the general GCD theorem, it has been successfully used in a series papers by the second author and her co-authors, see 
\cite{GNSW}, \cite{GSW20}, \cite{GSW22}, \cite{GW22}.   The purpose of this paper is to  further  use the ideas developed in \cite{GNSW}, \cite{GSW20}, \cite{GSW22}, \cite{GW22} to extend the defect relation, for example to the moving target case, by using the GCD method. Furthermore, the truncated defect relation is also studied.  We also pay attention on the degenerate locus. In particular, we can relax the condition that $f$ being algebraically non-degenerate to the condition that the image of $f$ is not contained in a subvariety $Z$ which can be effectively pre-determined and is independent of $f$,  in the sprite of the strong Green-Griffith-Lang conjecture.

\section{Statement of the results}
   We use the standard notations in Nevanlinna theory (see \cite{ru2001nevanlinna}, or \cite{GNSW}, \cite{GSW20}, \cite{GSW22}, \cite{GW22}). Let $\mathbf{g}=(g_0,\ldots,g_n):\CC\to\PP^n(\CC)$ be a holomorphic curve, where $g_0,\hdots,g_n$ are entire functions without common zero.  We recall that the  {\it small field with respect to   $\mathbf{g}$}  is given by
\begin{equation}\label{smallfield}
    K_{\mathbf{g}}:=\{a:a \text{ is a meromorphic function on ${\Bbb C}$  with } T_{a}(r)= {\rm o}( T_{\mathbf{g}}(r))\}.
\end{equation} 

Let $K$ be a subfield of the field $\mathcal M$ of meromorphic functions.  We say that  {\it $Z$ is a Zariski closed subset in $\mathbb P^n$ defined over $K$} if  there exists a non-constant homogeneous polynomial  $F\in K [x_0,\hdots,x_n]$ such that $Z= \{[f_0:\cdots:f_n]\in \mathbb P^n(\mathcal M): F(f_0,\ldots,f_n)\equiv 0\}$.  We say a  holomorphic map $\mathbf{g} :\CC\to \PP^n$ is not contained in $Z$ if  $\mathbf{g}$ is not identically zero.
In particular, when $K=\mathbb C$, the Zariski closed set is defined over $\mathbb C$, i.e. $F\in \mathbb C [x_0,\hdots,x_n]$, and $\mathbf{g}$ is not contained in $Z$ is equivalent to $F(\mathbf{g})\not\equiv 0$.
 For each polynomial $G=\sum_{I}a_{I}{{\bf x}^{I}}\in K[x_{0},\hdots,x_{n}]$, where
$I=(i_{0},\hdots,i_{n})\in\mathbb{Z}_{\ge0}^{n+1}$ and ${\bf x}^{I}=x_{0}^{i_{0}}\cdots x_{n}^{i_{n}}$, we denote by $G({z_{0}}):=\sum_{I}a_{I}(z_{0}){{\bf x}^{I}}$ if all the coefficients of $G$ are holomorphic at $z_0$ and do not vanish simultaneously at  $z_0$.
Let $G_1,\hdots,G_q$ be non-constant  polynomials  in  $K[x_{0},\hdots,x_{n}]$.  We say that they are {\it in weakly general position} if there exist a point $z_0\in \mathbb C$ such that  for each $G_i({z_{0}})$, $1\le i\le q$, can be defined as above and the zero locus of $G_i({z_{0}})$ (as a divisor in $\mathbb P^n(\mathbb C)$), $1\le i\le q$,  are in general position.
\begin{theorem}\label{main_thm_1GExc}
Let $K$ be a subfield of the field $\mathcal M$ of meromorphic functions.
Let $F$ be a nonconstant  homogeneous polynomial in $K[x_0,\hdots,x_n]$ with no monomial  factors and no repeated factors.  Denote by $H_i$, $0\le i\le n$,   the coordinate hyperplanes of $\mathbb P^n$. 
Then, for any   $\epsilon >0$, there exists a proper Zariski closed subset $Z$ of $\mathbb P^n$ defined over $K$ such that for any nonconstant holomorphic curve   $\mathbf{g}:\CC\to\PP^n(\CC)$ with $K\subset K_{\mathbf{g}}$, $N_{\mathbf{g}}(H_i,r)={\rm o}( T_{\mathbf{g}}(r))$ for $0\le i\le n$ and  $\mathbf g $  is not contained in $Z$, we have   
\begin{align}\label{multizerobdd}
	N_{\mathbf{g}}([F=0],r)-N^{(1)}_{\mathbf{g}}([F=0],r)\le_{\rm exc} \epsilon T_{\mathbf{g}}(r).
\end{align}
If we assume furthermore that  the hypersurface  defined by   $F$ in $\mathbb P^n$ and the coordinate hyperplanes are in weakly general position, then
\begin{align}\label{truncate1_2}
N^{(1)}_{\mathbf{g}}([F=0],r)\ge_{\rm exc}  (\deg  F- \epsilon)\cdot T_{\mathbf g}(r).
\end{align}
Moreover, the exceptional set $Z$ can be expressed as the zero locus of a finite set
$\Sigma\subset K[x_0,\ldots,x_n]$ with the following properties: {\rm(Z1)} $\Sigma$ depends on $\epsilon$ and $F$  only  and can be determined explicitly;  {\rm(Z2)}   the degree of each polynomial in $\Sigma$ can be effectively bounded from above in terms of $\epsilon$, $n$, and the degree of $F$.   
 \end{theorem}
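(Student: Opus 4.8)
The plan is to reduce inequality (\ref{multizerobdd}) to the GCD theorem of Levin and the second author by exhibiting $N_{\mathbf g}([F=0],r)-N^{(1)}_{\mathbf g}([F=0],r)$ as (bounded by) a greatest common divisor counting function of two polynomials built from $F$ and its partial derivatives. The starting observation is that if $z_0$ is a zero of $F(\mathbf g)$ of multiplicity $m\ge 2$, then $z_0$ is also a zero of each $\partial F/\partial x_j(\mathbf g)$ (after clearing the common factor in $\mathbf g=(g_0,\dots,g_n)$), of multiplicity at least $m-1$, by the chain rule together with the Euler relation $\sum_j x_j\,\partial F/\partial x_j=(\deg F)F$. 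Hence $N_{\mathbf g}([F=0],r)-N^{(1)}_{\mathbf g}([F=0],r)$ is dominated by the counting function of the gcd of $F(\mathbf g)$ and the ideal generated by the $\partial F/\partial x_j(\mathbf g)$. Because $F$ has no repeated factors, this gcd is supported on the (lower-dimensional) singular locus of $[F=0]$ together with its intersection with the coordinate hyperplanes; and because $F$ has no monomial factors, the relevant subscheme does not contain any coordinate hyperplane, which is what will make the hypotheses $N_{\mathbf g}(H_i,r)={\rm o}(T_{\mathbf g}(r))$ usable.

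Next I would set up the GCD input precisely. After possibly passing to a Veronese re-embedding to replace $F$ by a linear form in the new coordinates (the standard device, as in \cite{GNSW}, \cite{GSW20}, \cite{GSW22}, \cite{GW22}), one considers the closed subscheme $W$ cut out by $F$ and its partials; one shows $W$ has codimension $\ge 2$ and is contained in no coordinate hyperplane. The GCD theorem then gives, for any $\epsilon>0$, a proper Zariski-closed $Z_0$ (defined over $K$, of effectively bounded degree — this is where properties (Z1), (Z2) come from) such that for holomorphic $\mathbf g$ with $K\subset K_{\mathbf g}$, missing the coordinate hyperplanes to order $\mathrm o(T_{\mathbf g})$, and $\mathbf g\not\subset Z_0$, the gcd counting function attached to $W$ is $\le_{\rm exc}\epsilon\,T_{\mathbf g}(r)$. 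Combining with the multiplicity estimate of the previous paragraph yields (\ref{multizerobdd}), with $Z$ the union of $Z_0$ over a suitable finite choice of $\epsilon$-data. The moving-target generality is automatic here because the GCD theorem of \cite{levin2019greatest} already allows coefficients in $K_{\mathbf g}$.

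For the truncated lower bound (\ref{truncate1_2}) I would argue by contradiction via the Second Main Theorem for hypersurfaces in weakly general position. Under the weak general position hypothesis there is a point $z_0$ at which $F(z_0)=0$ and the coordinate hyperplanes are in general position; a Second Main Theorem (the moving-target version, e.g.\ as derived from \cite{ru04} and its extensions, or directly from the GCD/Schmidt-subspace circle of ideas used throughout the cited papers) applied to the $n+2$ divisors $H_0,\dots,H_n$ and $[F=0]$ gives
\begin{equation*}
(\deg F)\,T_{\mathbf g}(r)+\sum_{i=0}^{n}T_{\mathbf g}(r)\le_{\rm exc} N^{(1)}_{\mathbf g}([F=0],r)+\sum_{i=0}^{n}N^{(1)}_{\mathbf g}(H_i,r)+\epsilon\,T_{\mathbf g}(r),
\end{equation*}
possibly after again enlarging $Z$ by the exceptional locus of that theorem. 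Since $N_{\mathbf g}(H_i,r)={\rm o}(T_{\mathbf g}(r))$ by hypothesis, the $H_i$-terms on both sides cancel up to $\mathrm o(T_{\mathbf g})$, and (\ref{truncate1_2}) drops out. Finally, to verify (Z1)–(Z2) I would trace through the construction: $\Sigma$ consists of the defining polynomials of $W$ (explicit in terms of $F$ and its partials, hence depending only on $F$) together with the finitely many polynomials produced by the effective form of the GCD theorem and of the Second Main Theorem, whose degrees are bounded in terms of $n$, $\deg F$, and $\epsilon$ through the Veronese degree and the bounds in \cite{levin2019greatest}.

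\medskip
\noindent\emph{Main obstacle.} The delicate point is controlling the gcd contribution coming from the intersection of the singular locus of $[F=0]$ with the coordinate hyperplanes — precisely the place where the hypotheses "no monomial factors'' and "$N_{\mathbf g}(H_i,r)={\rm o}(T_{\mathbf g}(r))$'' must be combined. One must ensure the auxiliary subscheme $W$ fed into the GCD theorem genuinely has codimension $\ge 2$ and avoids every coordinate hyperplane component, so that the error term is truly $\mathrm o(T_{\mathbf g})+\epsilon\,T_{\mathbf g}$ rather than hiding a term of size $\asymp T_{\mathbf g}$; handling the weakly-general-position case (where a good point $z_0$ exists only at a single value, not generically) in the Second Main Theorem step is the analogous subtlety for (\ref{truncate1_2}).
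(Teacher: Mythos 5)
Your starting observation is false, and it sinks the whole mechanism. If $F(\mathbf g)$ vanishes to order $m\ge 2$ at $z_0$, it is \emph{not} true that each $\partial F/\partial x_j(\mathbf g)$ vanishes to order $\ge m-1$: take $F=x_0x_1-x_2^2$ and $\mathbf g=(1,z^2+z^3,z)$, so $F(\mathbf g)=z^3$ has a triple zero at $0$, while $\partial F/\partial x_1(\mathbf g)=1$ does not vanish at all. Multiple zeros of $F(\mathbf g)$ can occur at smooth points of $[F=0]$ (tangency), so the singular locus / gradient ideal you propose to feed into the GCD theorem simply does not see them. The correct differentiation fact is the chain rule, $\frac{d}{dz}F(\mathbf g)=\sum_j\frac{\partial F}{\partial x_j}(\mathbf g)\,g_j'$, which does vanish to order $m-1$ — but this is a \emph{single} polynomial in $\mathbf g$ whose coefficients involve $g_j'$, and those are emphatically not in $K_{\mathbf g}$; so the GCD theorem of \cite{levin2019greatest}, which requires coefficients of slow growth, cannot be applied to the pair $(F, \sum_j g_j'\,\partial F/\partial x_j)$. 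This is exactly the obstruction the paper's construction is designed to circumvent: passing to affine variables $u_i=g_i/g_0$, one replaces the derivative by the twisted operator $D_{\mathbf u}(G)(\mathbf x)=\sum_{\mathbf i}\bigl(a_{\mathbf i}'+a_{\mathbf i}\sum_j i_j\, u_j'/u_j\bigr)\mathbf x^{\mathbf i}$, which still satisfies $G(\mathbf u)'=D_{\mathbf u}(G)(\mathbf u)$ but has coefficients in $K_{\mathbf g}$ precisely because the hypothesis $N_{\mathbf g}(H_i,r)={\rm o}(T_{\mathbf g}(r))$ makes each logarithmic derivative $u_j'/u_j$ small. The coprimality of $G$ and $D_{\mathbf u}(G)$ (Lemma~\ref{coprimeD}, contingent on $G$ having no monomial or repeated factors) is what replaces your codimension-$\ge2$ subscheme.

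Two further gaps: (i) the GCD theorem carries an exceptional case where some monomial $u_1^{m_1}\cdots u_n^{m_n}$ has slow growth, and the construction of the exceptional set $Z$ comes almost entirely from handling this case by a multi-step unimodular change of variables (Lemmas~\ref{main_lemma} and \ref{special}) — your proposal says nothing about it and your ``Veronese re-embedding'' is not what is needed here; (ii) for \eqref{truncate1_2} you invoke a truncation-level-one Second Main Theorem for moving hypersurfaces of degree $\ge 2$, which is not available; the paper instead writes $F(\mathbf g)$ as a sum of monomial terms $\alpha_{\mathbf i}\mathbf g^{\mathbf i}$ and applies the truncated Borel lemma (Theorem~\ref{trunborel}), using the weak-general-position hypothesis to ensure that the pure-power terms $\alpha_{\mathbf i_i}g_i^d$ appear and that no proper subsum vanishes off a further explicit bad set $W$; this yields $d\,T_{\mathbf g}(r)\le N_{F(\mathbf g)}(0,r)+{\rm o}(T_{\mathbf g}(r))$, which only gives \eqref{truncate1_2} after being combined with \eqref{multizerobdd}, not independently of it.
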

\begin{theorem}\label{GG_conj}
Let $K$ be a subfield of the field of meromorphic functions. Let 
$F_i$, $1\le i\le n+1$, be homogeneous irreducible polynomials of positive degree  in $K[x_0,\hdots,x_n]$ such that $\sum_{i=1}^{n+1}\deg F_i\ge n+2$. 
Assume that there exists $z_0\in \CC$ such that all the coefficients of  all $F_i$, $1\le i\le n+1$, are  holomorphic at $z_0$  and the zero locus of  $F_i$  evaluated at $z_0$,  $1\le i\le n+1$, intersect transversally.  Then there exists a non-trivial homogeneous polynomial $B\in K[x_0,\hdots,x_n]$ such that for any nonconstant holomorphic map $\mathbf{f}:\CC\to \PP^n$  with $K\subset K_{\mathbf{f}}$ and $N_{F_i(\mathbf{f})} (0,r)={\rm o}( T_{\mathbf{f}}(r))$ for $1\le i\le n+1$, we have $B(\mathbf{f})\equiv 0$.  Furthermore, $B$ can be determined effectively and its degree can be effectively bounded from above in terms of  $n$, and the degrees of $F_i$, $1\leq i\leq n+1$.
\end{theorem}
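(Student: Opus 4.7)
The plan is to imitate the Noguchi--Winkelmann--Yamanoi reduction sketched in Section~\ref{sec:intro}, using Theorem~\ref{main_thm_1GExc} in place of their Theorem~A so that the argument extends to moving targets. Set $d := \operatorname{lcm}(\deg F_1, \ldots, \deg F_{n+1})$, $a_i := d/\deg F_i$, and define the rational map
\[
\pi \colon \PP^n_K \dashrightarrow \PP^n_K, \qquad [\mathbf{x}] \mapsto [F_1^{a_1}(\mathbf{x}) : \cdots : F_{n+1}^{a_{n+1}}(\mathbf{x})],
\]
and set $\mathbf{g} := \pi \circ \mathbf{f}$. The Jacobian $J := \det(\partial F_i/\partial x_j)_{1\le i\le n+1,\, 0\le j\le n} \in K[\mathbf{x}]$ is a nonzero polynomial of degree $\sum_i \deg F_i - (n+1) \ge 1$ by the transversality at $z_0$; fix an irreducible factor $\widetilde G$ of $J$ in $K[\mathbf{x}]$.

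The first job is to verify the hypotheses of Theorem~\ref{main_thm_1GExc} for $\mathbf{g}$. The inclusion $K \subset K_{\mathbf{g}}$ is immediate, and a routine height comparison gives $T_{\mathbf{g}}(r) = d\, T_{\mathbf{f}}(r) + \mathrm{o}(T_{\mathbf{f}}(r))$ (a drop in the order of growth would already exhibit an algebraic relation on $\mathbf{f}$ to be absorbed into $B$), while the hypothesis $N_{F_i(\mathbf{f})}(0,r) = \mathrm{o}(T_{\mathbf{f}}(r))$ gives
\[
N_{\mathbf{g}}(H_i, r) \;\le\; a_i\, N_{F_i(\mathbf{f})}(0, r) \;=\; \mathrm{o}(T_{\mathbf{g}}(r))
\]
for each coordinate hyperplane $H_i \subset \PP^n_{\mathbf{y}}$.

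The crux is to apply Theorem~\ref{main_thm_1GExc} to $\mathbf{g}$ with the polynomial $F \in K[\mathbf{y}]$ cutting out $\pi([\widetilde G = 0])$, the image of the ramification divisor $[\widetilde G = 0]$ in the target $\PP^n_{\mathbf{y}}$. Since $\widetilde G$ appears with multiplicity one in the ramification divisor of $\pi$, the ramification index of $\pi$ along $[\widetilde G = 0]$ equals $2$, and a local computation shows that at any $z_0$ with $\mathbf{f}(z_0) \in [\widetilde G = 0] \setminus \bigcup_i [F_i = 0]$ at which $\mathbf{f}$ meets $[\widetilde G = 0]$ transversally with contact order $m$, the function $F(\mathbf{g})$ vanishes at $z_0$ to order $2m$. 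Summing over such points yields
\[
N_{\mathbf{g}}([F=0], r) - N^{(1)}_{\mathbf{g}}([F=0], r) \;\ge\; N_{\mathbf{f}}([\widetilde G = 0], r) - \mathrm{o}(T_{\mathbf{f}}(r)).
\]
Thus, if $\mathbf{g}$ is not contained in the exceptional Zariski subvariety $Z$ produced by Theorem~\ref{main_thm_1GExc}, then (\ref{multizerobdd}) forces $N_{\widetilde G(\mathbf{f})}(0, r) \le_{\rm exc} \epsilon\, T_{\mathbf{f}}(r)$ for every $\epsilon > 0$; in the complementary case $\mathbf{g} \subset Z$, the curve $\mathbf{f}$ already lies in the $K$-algebraic set $\pi^{-1}(Z)$.

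Finally, $B$ is assembled as the product of a defining polynomial of $\pi^{-1}(Z)$ and the defining polynomial of the exceptional subvariety arising in the concluding step, which applies a truncated Second Main Theorem to $\mathbf{f}$ against the $n+2$ hypersurfaces $\{F_1 = 0\}, \ldots, \{F_{n+1} = 0\}, \{\widetilde G = 0\}$---now in weakly general position at $z_0$---to force $\mathbf{f}$ into an effectively computable proper subvariety defined over $K$. Both factors of $B$ depend only on $n$ and on the $F_i$, yielding the claimed effective degree bound. The main technical hurdle I anticipate is the multiplicity bookkeeping in the ramification step: all sources of non-simple zeros of $F(\mathbf{g})$---contributions from singular points of $[\widetilde G = 0]$, higher ramification along $[F_i = 0]$, and tangential intersections of $\mathbf{f}$ with $[\widetilde G = 0]$---must either be absorbed into $\mathrm{o}(T_{\mathbf{f}}(r))$ error terms or be shown to contribute non-negatively to the gap $N - N^{(1)}$.
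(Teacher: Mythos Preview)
Your setup---the finite morphism $\pi$, the Jacobian factor $\widetilde G$, the map $\mathbf g=\pi\circ\mathbf f$, the verification that $N_{\mathbf g}(H_i,r)=\mathrm{o}(T_{\mathbf g}(r))$, and the inequality
\[
N_{\mathbf f}([\widetilde G=0],r)\;\le\; N_{\mathbf g}([A=0],r)-N^{(1)}_{\mathbf g}([A=0],r)+\mathrm{o}(T_{\mathbf f}(r))
\]
coming from $\pi^*A=\widetilde G^{\,2}H$---all match the paper's argument. The gap is in your endgame. After obtaining $N_{\widetilde G(\mathbf f)}(0,r)\le_{\exc}\epsilon\,T_{\mathbf f}(r)$ you invoke ``a truncated Second Main Theorem'' for the $n+2$ moving hypersurfaces $[F_1=0],\ldots,[F_{n+1}=0],[\widetilde G=0]$ to force $\mathbf f$ into an \emph{effectively computable} proper subvariety over $K$. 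No such theorem is available: the known Second Main Theorems for hypersurfaces (Ru's, or the moving-target version of Dethloff--Tan) only conclude that $\mathbf f$ is algebraically degenerate, with no control on the degeneracy locus---it is not effective, and it is not independent of $\mathbf f$. But the statement you are proving demands a single polynomial $B$, determined effectively from the $F_i$ alone, that works for \emph{every} admissible $\mathbf f$. Your ``$B_2$'' factor does not exist.

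The paper closes this gap without any Second Main Theorem, by using the \emph{second} conclusion \eqref{truncate1_2} of Theorem~\ref{main_thm_1GExc} (which you did not use; it requires checking that $[A=0]$ is in weakly general position with the coordinate hyperplanes of the target $\PP^n$, a consequence of transversality at $z_0$). That inequality gives $N^{(1)}_{A(\mathbf g)}(0,r)\ge_{\exc}(\deg A-\epsilon)T_{\mathbf g}(r)$, and hence, via the First Main Theorem and the functoriality $m_{\mathbf f}([\widetilde G=0],r)\le m_{\mathbf g}([A=0],r)$, one obtains $m_{\mathbf f}([\widetilde G=0],r)\le_{\exc}\epsilon\,T_{\mathbf g}(r)$. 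Adding the two bounds yields $T_{[\widetilde G=0],\mathbf f}(r)\le_{\exc}2\epsilon\,d\,T_{\mathbf f}(r)+\mathrm{o}(T_{\mathbf f}(r))$, which for $\epsilon=1/(4d)$ contradicts $T_{[\widetilde G=0],\mathbf f}(r)=\deg\widetilde G\cdot T_{\mathbf f}(r)+\mathrm{o}(T_{\mathbf f}(r))$. Thus the \emph{only} exceptional set needed is $\pi^{-1}(Z)$ with $Z$ supplied by Theorem~\ref{main_thm_1GExc}, and $B=B_0(F_1^{a_1},\ldots,F_{n+1}^{a_{n+1}})$ is effective and independent of $\mathbf f$.
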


\begin{theorem}[Defect relation for moving targets]\label{defect}Let $K$ be a subfield of the field of meromorphic functions. Let 
$F_i$, $1\le i\le n+1$, be homogeneous irreducible polynomials of positive degree  in $K[x_0,\hdots,x_n]$ such that $\sum_{i=1}^{n+1}\deg F_i\ge n+2$. 
Assume that there exists $z_0\in \CC$ such that all the coefficients of  $F_i$, $1\le i\le n+1$, are  holomorphic at $z_0$  and the zero locus of  $F_i$ evaluated at $z_0$,  $1\le i\le n+1$, intersect transversally.  Then there exists a non-trivial homogeneous polynomial $B\in K[x_0,\hdots,x_n]$ such that for any nonconstant holomorphic map $\mathbf{f}:\CC\to \PP^n$  with $K\subset K_{\mathbf{f}}$ and $B(\mathbf{f})\not\equiv 0$, we have
the following defect inequality holds
$$
\sum_{i=1}^{n+1}\delta_{\mathbf{f}} (D_i)<n+1,
$$
where $D_i=[F_i=0]$.

Additionally, if $n=2$, then $$
\sum_{i=1}^{3}\delta_{\mathbf{f}}^{(1)}(D_i)<3, $$where, for a divisor $D$ with  $d=\deg(D)$, 
 $$\delta_{\mathbf{f}}^{(1)}(D) = 1-\limsup_{r\rightarrow \infty} {N^{(1)}_f(r, D)\over dT_f(r)}.$$
\end{theorem}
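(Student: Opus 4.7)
The plan is to deduce the first defect inequality immediately from Theorem~\ref{GG_conj} via a pigeonhole argument, and to handle the $n=2$ truncated variant by producing an auxiliary fourth hypersurface through the Jacobian construction, controlling its counting function via Theorem~\ref{main_thm_1GExc}, and closing the argument with a moving-target Second Main Theorem.

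For the main inequality, suppose for contradiction that $\sum_{i=1}^{n+1}\delta_{\mathbf{f}}(D_i)=n+1$. Since $\delta_{\mathbf{f}}(D_i)\le 1$ for each $i$, equality forces $\delta_{\mathbf{f}}(D_i)=1$ for every $i$, i.e.\ $N_{\mathbf{f}}(D_i,r)={\rm o}(T_{\mathbf{f}}(r))$ for $1\le i\le n+1$. Invoking Theorem~\ref{GG_conj} with the same data $(F_1,\ldots,F_{n+1})$ and field $K$ produces a non-trivial $B\in K[x_0,\ldots,x_n]$ with $B(\mathbf{f})\equiv 0$. Taking this $B$ as the effective obstruction in the statement of Theorem~\ref{defect} contradicts the hypothesis $B(\mathbf{f})\not\equiv 0$, so $\sum_{i=1}^{n+1}\delta_{\mathbf{f}}(D_i)<n+1$.

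For the $n=2$ truncated statement, suppose for contradiction that $\sum_{i=1}^{3}\delta^{(1)}_{\mathbf{f}}(D_i)=3$, so $N^{(1)}_{\mathbf{f}}(D_i,r)={\rm o}(T_{\mathbf{f}}(r))$ for each $i$. Following the NWY-style construction outlined in Section~\ref{sec:intro}, form the Jacobian
\begin{equation*}
G\coloneqq\det\left(\frac{\partial F_i}{\partial x_j}\right)_{1\le i\le 3,\,0\le j\le 2}\in K[x_0,x_1,x_2],
\end{equation*}
and pick an irreducible factor $\tilde G$ of $G$ for which $\tilde D\coloneqq\{\tilde G=0\}$, together with $D_1,D_2,D_3$, lies in general position; transversality at $z_0$ guarantees existence, and the loci where this fails are absorbed into the polynomial $B$. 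Introduce the lifted curve $\mathbf{g}=\pi\circ\mathbf{f}$ with $\pi(\mathbf{x})=[F_1^{a_1}(\mathbf{x}):F_2^{a_2}(\mathbf{x}):F_3^{a_3}(\mathbf{x})]$ and $a_i d_i=\operatorname{lcm}(d_1,d_2,d_3)$. A logarithmic-derivative estimate relating the Jacobian to the coordinate-hyperplane data of $\mathbf{g}$ gives a bound
\begin{equation*}
N_{\mathbf{f}}(\tilde D,r)\le \sum_{i=1}^{3}\bigl(N_{\mathbf{f}}(D_i,r)-N^{(1)}_{\mathbf{f}}(D_i,r)\bigr)+{\rm o}(T_{\mathbf{f}}(r)),
\end{equation*}
which, combined with Theorem~\ref{main_thm_1GExc} applied to $\mathbf{g}$, forces $N_{\mathbf{f}}(\tilde D,r)\le_{\rm exc}\epsilon T_{\mathbf{f}}(r)$.

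Applying Ru's Second Main Theorem for moving hypersurfaces to the four divisors $D_1,D_2,D_3,\tilde D$ in general position in $\PP^2$ then yields $\sum_{i=1}^{3}\delta_{\mathbf{f}}(D_i)+\delta_{\mathbf{f}}(\tilde D)\le 3$. Combined with $\delta_{\mathbf{f}}(\tilde D)\ge 1-\epsilon$ and the truncation-comparison inequality $\delta^{(1)}_{\mathbf{f}}(D_i)\le\delta_{\mathbf{f}}(D_i)+\epsilon$ supplied again by Theorem~\ref{main_thm_1GExc}, this forces $\sum_{i=1}^{3}\delta^{(1)}_{\mathbf{f}}(D_i)<3$, the desired contradiction. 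The main obstacle will be the truncation bridge: Theorem~\ref{main_thm_1GExc} is phrased with the full (not truncated) counting hypothesis $N_{\mathbf{g}}(H_i,r)={\rm o}(T_{\mathbf{g}})$, so one must carefully engineer the lifting $\pi$ and track how multiplicities transform under it in order to bootstrap the truncated smallness on $\mathbf{f}$ into a genuine coordinate hyperplane hypothesis usable by Theorem~\ref{main_thm_1GExc}, all while keeping the auxiliary polynomial $B$ effective and independent of $\mathbf{f}$.
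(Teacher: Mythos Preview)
Your argument for the main defect inequality is correct and is exactly what the paper does: the pigeonhole reduction to $N_{\mathbf f}(D_i,r)={\rm o}(T_{\mathbf f}(r))$ followed by Theorem~\ref{GG_conj}.

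For the $n=2$ truncated statement, however, there is a genuine gap, and it is precisely the obstacle you flag at the end without resolving. From $\delta^{(1)}_{\mathbf f}(D_i)=1$ you only obtain $N^{(1)}_{F_i(\mathbf f)}(0,r)={\rm o}(T_{\mathbf f}(r))$, hence only $N^{(1)}_{\mathbf g}(H_i,r)={\rm o}(T_{\mathbf g}(r))$ for the lifted curve $\mathbf g=\pi\circ\mathbf f$. Theorem~\ref{main_thm_1GExc} requires the \emph{full} counting function $N_{\mathbf g}(H_i,r)={\rm o}(T_{\mathbf g}(r))$, and no amount of ``engineering the lifting $\pi$'' will promote truncated smallness to full smallness: the multiplicities of $F_i(\mathbf f)$ at its zeros are simply uncontrolled. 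Your displayed ``logarithmic-derivative estimate'' bounding $N_{\mathbf f}(\tilde D,r)$ by $\sum_i\bigl(N_{\mathbf f}(D_i,r)-N^{(1)}_{\mathbf f}(D_i,r)\bigr)$ is not what the Jacobian/ramification argument actually gives (the paper's bound is $N_{\tilde G(\mathbf f)}(0,r)\le N_{A(\mathbf g)}(0,r)-N^{(1)}_{A(\mathbf g)}(0,r)+{\rm o}(T_{\mathbf f}(r))$ via $\pi^*A=\tilde G^2H$), and in any case your right-hand side is not small under the truncated hypothesis alone. The subsequent appeals to Ru's moving-target SMT and to a ``truncation-comparison inequality supplied by Theorem~\ref{main_thm_1GExc}'' inherit the same circularity: each requires exactly the full-counting input you do not have.

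The paper's fix is not to bootstrap Theorem~\ref{main_thm_1GExc} but to replace it: it states and proves a separate result (Theorem~\ref{main_thm_n2}) valid under the weaker hypothesis $N^{(1)}_{\mathbf g}(H_i,r)={\rm o}(T_{\mathbf g}(r))$, available only for $n=2$. The key is that the GCD theorem (Theorem~\ref{Mgcdunit}) has a clause~(b) tailored to truncated hypotheses when one of the two polynomials is in weakly general position with the coordinate hyperplanes, and when the algebraic degeneration alternative occurs one can reduce to a one-variable situation and invoke Proposition~\ref{gcdn1}. With Theorem~\ref{main_thm_n2} in hand, the entire proof of Theorem~\ref{GG_conj} goes through verbatim under the truncated assumption, and the defect conclusion follows as in the general case. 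This is what you should aim for; the route through an external SMT is a detour that does not close.
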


Note that when $K={\Bbb C}$, then $B\in {\Bbb C}[x_0,\hdots,x_n]$, so we have the following strong defect relation in the sprite of the strong Green-Griffith-Lang conjecture.
\begin{corollary} Let  $D_i$, $1\le i\le n+1$, be $n+1$ hypersurfaces in $\mathbb P^n(\CC)$, not all being hyperplanes.
Assume $D_i$, $1\le i\le n+1$, intersect transversally.  Then there exists a Zariski closed subset $Z$ in $\mathbb P^n(\CC)$, which 
can be determined effectively and its degree can be effectively bounded from above in terms of $n$, and the degree of $D_i$, 
  such that for any non-constant holomorphic map $\mathbf{f}:\CC\to \PP^n(\CC)$ whose image  is  not  contained in $Z$, the following defect inequality holds
$$
\sum_{i=1}^{n+1}\delta_{\mathbf{f}} (D_i)<n+1.
$$
Additionally, if $n=2$, then $$
\sum_{i=1}^{3}\delta_{\mathbf{f}}^{(1)}(D_i)<3.
$$
\end{corollary}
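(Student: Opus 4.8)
The plan is to derive this statement as the $K=\mathbb C$ case of Theorem~\ref{defect}, after a short check that the hypotheses match.  Write each $D_i=[F_i=0]$ with $F_i\in\mathbb C[x_0,\hdots,x_n]$ a homogeneous irreducible polynomial, and set $d_i=\deg F_i=\deg D_i$.  Since the $D_i$ are not all hyperplanes, at least one of them, say $D_{i_0}$, has $d_{i_0}\ge 2$, while the remaining $n$ have $d_i\ge 1$; hence $\sum_{i=1}^{n+1}d_i\ge n+2$, which is exactly the degree hypothesis of Theorem~\ref{defect}.

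Next I would verify the two remaining hypotheses of Theorem~\ref{defect} with $K=\mathbb C$.  First, because every coefficient of every $F_i$ is a constant, ``$F_i$ evaluated at $z_0$'' equals $F_i$ for each $z_0\in\mathbb C$, so the requirement that there be a point $z_0$ at which the $F_i$ have holomorphic coefficients and the zero loci of the evaluated $F_i$ intersect transversally reduces precisely to the present assumption that $D_1,\hdots,D_{n+1}$ intersect transversally.  Second, $K=\mathbb C\subset K_{\mathbf f}$ holds automatically, as constants lie in the small field of any nonconstant holomorphic curve.  Theorem~\ref{defect} then produces a non-trivial homogeneous polynomial $B\in K[x_0,\hdots,x_n]=\mathbb C[x_0,\hdots,x_n]$, effectively determined and of degree effectively bounded in terms of $n$ and the $\deg D_i$, such that $\sum_{i=1}^{n+1}\delta_{\mathbf f}(D_i)<n+1$ for every nonconstant $\mathbf f:\mathbb C\to\mathbb P^n(\mathbb C)$ with $B(\mathbf f)\not\equiv 0$, and also $\sum_{i=1}^{3}\delta_{\mathbf f}^{(1)}(D_i)<3$ when $n=2$.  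Putting $Z:=[B=0]\subset\mathbb P^n(\mathbb C)$, a Zariski closed subset of degree $\deg B$ and hence of effectively bounded degree, it remains only to note that for a holomorphic curve with reduced representation $\mathbf f=[f_0:\cdots:f_n]$ the image lies in $Z$ precisely when $B(f_0,\hdots,f_n)\equiv 0$; thus ``the image of $\mathbf f$ is not contained in $Z$'' is literally the hypothesis $B(\mathbf f)\not\equiv 0$, and the claimed inequalities follow.

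The only point I expect to need care with is bookkeeping rather than analysis: one should make sure that ``hypersurface'' and ``intersect transversally'' as used in the Corollary specialize exactly to the irreducible-plus-transversal input of Theorem~\ref{defect} --- in particular that no $D_i$ is tacitly reducible, which would yield more than $n+1$ components and would require first passing to a suitable configuration --- and that $B$ is genuinely non-trivial, so that $Z$ is a proper subset and the effectivity claims are inherited verbatim.  Beyond this, the argument is a direct quotation of Theorem~\ref{defect}.
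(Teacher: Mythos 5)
Your proposal is correct and reproduces exactly what the paper has in mind: the Corollary is the specialization of Theorem~\ref{defect} to $K=\mathbb C$, where constant coefficients make the $z_0$-evaluation trivial, $\mathbb C\subset K_{\mathbf f}$ holds automatically, ``not all hyperplanes'' gives $\sum\deg D_i\ge n+2$, and $Z$ is the zero locus of the resulting $B\in\mathbb C[x_0,\ldots,x_n]$ (noting, as you do, that ``image not contained in $Z$'' is precisely $B(\mathbf f)\not\equiv 0$ by the paper's own convention). Your closing caveat about implicit irreducibility of the $D_i$ is a fair bookkeeping observation, but beyond that the argument is the same as the paper's.
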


\section{Some preliminary and the GCD theorem}\label{preliminary} 

\subsection{Preliminary}
We will make use of the following   elementary inequality  (see \cite{ru2001nevanlinna}). 
\begin{proposition} 
	\label{basic_prop}
	Let ${\mathbf f}=(f_0,\hdots,f_n):\mathbb C\to \PP^n(\mathbb C)$ be  holomorphic curve, where $f_0,\dots,f_n$ are entire functions without common zeros.  Assume that $f_0$ is not identically zero.  Then  
	\begin{equation*}
		T_{f_j/f_0}(r)+{\rm O}(1)\leq T_{\mathbf f}(r)\leq \sum_{j=1}^nT_{f_j/f_0}(r)+{\rm O}(1).
	\end{equation*}
\end{proposition}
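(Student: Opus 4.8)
The plan is to reduce the statement to integral-geometric identities for the meromorphic functions $h_j:=f_j/f_0$, $1\le j\le n$, and then invoke Jensen's formula together with two elementary inequalities. First I would use that, since $f_0,\dots,f_n$ have no common zero, the tuple $(f_0,\dots,f_n)$ is a reduced representation, so pointwise on $\CC$ one has $\|\mathbf f\|=|f_0|\cdot\max(1,|h_1|,\dots,|h_n|)$ for the sup-norm $\|\cdot\|$. Integrating $\log\|\mathbf f\|$ over the circle $|z|=r$ and recalling that the Cartan characteristic satisfies $T_{\mathbf f}(r)=\frac1{2\pi}\int_0^{2\pi}\log\|\mathbf f(re^{i\theta})\|\,d\theta+{\rm O}(1)$ (the ${\rm O}(1)$ absorbing the normalization at the origin and the passage between the sup-norm and the Euclidean norm), this becomes
\[
T_{\mathbf f}(r)=\frac1{2\pi}\int_0^{2\pi}\log|f_0(re^{i\theta})|\,d\theta+\frac1{2\pi}\int_0^{2\pi}\max_{1\le j\le n}\log^+|h_j(re^{i\theta})|\,d\theta+{\rm O}(1).
\]
Jensen's formula turns the first integral into $N_{f_0}(0,r)+{\rm O}(1)$, and the identity $\max_{j}\log^+|h_j|=\log\max(1,|h_1|,\dots,|h_n|)$ together with $\max_{j}\log^+|h_j|\le\sum_{j=1}^n\log^+|h_j|$ sandwiches the second integral between $m_{h_j}(\infty,r)$ (for any fixed $j$) and $\sum_{j=1}^n m_{h_j}(\infty,r)$.

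For the upper bound I would additionally exploit the pointwise divisor inequality that encodes the no-common-zero hypothesis: at each $z\in\CC$ some $f_k$ has $\operatorname{ord}_z f_k=0$, which forces
\[
\operatorname{ord}_z f_0\ \le\ \max_{1\le j\le n}\max\bigl(0,\operatorname{ord}_z f_0-\operatorname{ord}_z f_j\bigr)\ \le\ \sum_{j=1}^n\max\bigl(0,\operatorname{ord}_z f_0-\operatorname{ord}_z f_j\bigr),
\]
the right-hand side being the combined pole order of $h_1,\dots,h_n$ at $z$. Summing over $|z|\le t$ and integrating in $t$ gives $N_{f_0}(0,r)\le\sum_{j=1}^n N_{h_j}(\infty,r)$; combined with the display above this yields
\[
T_{\mathbf f}(r)\le N_{f_0}(0,r)+\sum_{j=1}^n m_{h_j}(\infty,r)+{\rm O}(1)\le\sum_{j=1}^n\bigl(N_{h_j}(\infty,r)+m_{h_j}(\infty,r)\bigr)+{\rm O}(1)=\sum_{j=1}^n T_{h_j}(r)+{\rm O}(1).
\]

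For the lower bound, fix $j$. Every pole of $h_j$ comes from a zero of $f_0$ of at least the same order, so $N_{h_j}(\infty,r)\le N_{f_0}(0,r)=\frac1{2\pi}\int_0^{2\pi}\log|f_0(re^{i\theta})|\,d\theta+{\rm O}(1)$, while $m_{h_j}(\infty,r)\le\frac1{2\pi}\int_0^{2\pi}\log\max(1,|h_1|,\dots,|h_n|)\,d\theta$. Adding these and recombining the two logarithms into $\log\|\mathbf f\|$ gives $T_{h_j}(r)\le T_{\mathbf f}(r)+{\rm O}(1)$, which is the left-hand inequality. The point to watch, more a bookkeeping matter than a genuine obstacle, is the pair of counting-function comparisons $N_{f_0}(0,r)\le\sum_{j}N_{h_j}(\infty,r)$ and $N_{h_j}(\infty,r)\le N_{f_0}(0,r)$: both are established at the level of divisors from the absence of a common zero, not by any integral estimate. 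The remaining ingredients are routine: Jensen's formula, the inequalities $\max\le\sum$ and $\log^+a\le\log\max(1,a)$, and the fact that replacing $\mathbf f$ by $\lambda\mathbf f$ with $\lambda$ entire and zero-free shifts $\frac1{2\pi}\int_0^{2\pi}\log\|\mathbf f(re^{i\theta})\|\,d\theta$ only by the constant $\log|\lambda(0)|$, since $\log|\lambda|$ is harmonic.
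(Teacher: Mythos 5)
The paper does not actually prove Proposition~\ref{basic_prop}; it quotes it as a known elementary fact and cites Ru's book \cite{ru2001nevanlinna}. Your proof is the standard one and is correct: you expand $\log\|\mathbf f\|=\log|f_0|+\max_{1\le j\le n}\log^+|h_j|$ pointwise, apply Jensen's formula to convert the $\log|f_0|$ term into $N_{f_0}(0,r)+{\rm O}(1)$, and use $\max\le\sum$ on the proximity side. The two divisor-level comparisons you single out, namely $N_{h_j}(\infty,r)\le N_{f_0}(0,r)$ (trivial: a pole of $h_j$ requires a zero of $f_0$) and $N_{f_0}(0,r)\le\sum_j N_{h_j}(\infty,r)$ (this is exactly where the no-common-zero hypothesis enters, giving $\operatorname{ord}_z f_0=\max_j\max(0,\operatorname{ord}_z f_0-\operatorname{ord}_z f_j)$), are both correct and are indeed the essential content of the inequality. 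The remaining ${\rm O}(1)$ bookkeeping (sup-norm versus Euclidean norm, Jensen's constant, normalization at the origin) is handled appropriately. No gaps.
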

 
 Combining the above proposition, we state the following result.
\begin{theorem}[{\cite[Theorem 2.1]{RuWang2003}}]\label{trunborel}  
	 Let $f_0,\dots,f_n$ be entire functions with no common zeros.  Assume that $f_{n+1}$ is the holomorphic function such that  $f_0+\dots+f_n+f_{n+1}=0$. If $\sum_{i\in I}f_i\ne 0$ for any proper subset $I\subset\{0,\dots,n+1\}$, then
	\begin{equation*}
		T_{f_j/f_i}(r)\le T_{\mathbf{f}}(r)+{\rm O}(1)\leq_{\exc} \sum_{i=0}^{n+1} N_{f_i}^{(n)}(0,r)+{\rm O}(\log T_{\mathbf{f}}(r)) 
	\end{equation*}
	 for any pair $0\le i,j\le n$, where  $\mathbf{f}:=(f_0,\hdots,f_{n})$.
\end{theorem}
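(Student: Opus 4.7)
My plan is to establish the stated truncated Second Main Theorem by applying Cartan's classical truncated SMT to the holomorphic curve $\mathbf{f}=[f_0:\cdots:f_n]\colon\mathbb{C}\to\mathbb{P}^n$ probed by the $n+2$ hyperplanes
\[
H_i=\{x_i=0\}\quad(0\le i\le n),\qquad H_{n+1}=\{x_0+x_1+\cdots+x_n=0\}.
\]
The first bound $T_{f_j/f_i}(r)\le T_{\mathbf{f}}(r)+O(1)$ is immediate from Proposition \ref{basic_prop}. For the second, I would first check that these hyperplanes are in general position in $\mathbb{P}^n$ (a short linear-algebra verification) and note that, since $f_{n+1}=-(f_0+\cdots+f_n)$, one has $N^{(n)}_{\mathbf{f}}(H_i,r)=N^{(n)}_{f_i}(0,r)$ for every $0\le i\le n+1$.

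When $f_0,\ldots,f_n$ are linearly independent over $\mathbb{C}$, the curve $\mathbf{f}$ is linearly non-degenerate and Cartan's truncated SMT yields the desired bound directly, since $q-n-1=(n+2)-n-1=1$. The main obstacle is the linearly degenerate case, where the image lies in a proper linear subspace $L\cong\mathbb{P}^k$ with $k<n$, and the induced map $\bar{\mathbf{f}}\colon\mathbb{C}\to L$ is linearly non-degenerate. Two restricted hyperplanes $H_i\cap L$ and $H_j\cap L$ coincide exactly when the corresponding functions $f_i,f_j\in\{f_0,\ldots,f_n,-f_{n+1}\}$ are $\mathbb{C}$-proportional, and this partitions the indices $\{0,\ldots,n+1\}$ into proportionality classes.

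The key combinatorial point---where the no-sub-sum-vanishing hypothesis enters crucially---is that there must be at least $k+2$ such classes. Otherwise, expanding the identity $\sum_{i=0}^{n+1}f_i=0$ against a basis of the $(k+1)$-dimensional span of $\{f_0,\ldots,f_n\}$ forces the sum over some proper class to vanish, contradicting the hypothesis. Moreover, the general-position property of $H_0,\ldots,H_{n+1}$ in $\mathbb{P}^n$ passes, via the dimension formula, to general position of the $m\ge k+2$ distinct restricted hyperplanes in $L$. Cartan's truncated SMT applied to $\bar{\mathbf{f}}\colon\mathbb{C}\to L\cong\mathbb{P}^k$ against this family then gives
\[
T_{\bar{\mathbf{f}}}(r)\le_{\rm exc}\sum_{j=1}^{m}N^{(k)}_{f_{i_j}}(0,r)+O(\log T_{\bar{\mathbf{f}}}(r))\le\sum_{i=0}^{n+1}N^{(n)}_{f_i}(0,r)+O(\log T_{\mathbf{f}}(r)),
\]
using $N^{(k)}\le N^{(n)}$ and $T_{\bar{\mathbf{f}}}(r)=T_{\mathbf{f}}(r)+O(1)$. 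The delicate step is the combinatorial lower bound $m\ge k+2$, for which the no-sub-sum-vanishing hypothesis is essential; everything else is standard Nevanlinna theory.
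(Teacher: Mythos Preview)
The paper does not prove this statement; it is quoted from \cite{RuWang2003} without argument, so there is no in-paper proof to compare against. Your overall strategy---Cartan's truncated Second Main Theorem applied to the coordinate hyperplanes together with $H_{n+1}=\{x_0+\cdots+x_n=0\}$---is the standard one, and both the first inequality and the linearly non-degenerate case are handled correctly.

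The gap is in the degenerate case. Your assertion that ``the general-position property of $H_0,\ldots,H_{n+1}$ in $\mathbb{P}^n$ passes, via the dimension formula, to general position of the $m\ge k+2$ distinct restricted hyperplanes in $L$'' is false. A concrete counterexample: take $n=4$, fix linearly independent entire functions $g_0,g_1,g_2$ with no common zeros (e.g.\ $g_j=e^{(j+1)z}$), and set
\[
f_0=g_0,\quad f_1=g_1,\quad f_2=g_2,\quad f_3=g_0+g_1,\quad f_4=g_0-g_1+g_2,\quad f_5=-(f_0+\cdots+f_4).
\]
A direct check shows that no proper subsum of $f_0+\cdots+f_5$ vanishes, so the hypothesis holds. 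Here $k=2$ and all six $f_i$ are pairwise non-proportional, so $m=6$. But $f_3=f_0+f_1$, hence the three restricted hyperplanes $H_0\cap L,\ H_1\cap L,\ H_3\cap L$ in $L\cong\mathbb{P}^2$ are concurrent (they all contain the point $[0{:}0{:}1]$ in the $g$-coordinates), and general position fails. The dimension formula only bounds $\dim\bigl((H_{i_1}\cap\cdots\cap H_{i_{k+1}})\cap L\bigr)$ from below; since $(n-k-1)+k<n$, it does not force this intersection to be empty. Consequently Cartan's SMT in $\mathbb{P}^k$ cannot be invoked for the full family of restricted hyperplanes, and your bound in the degenerate case is unjustified. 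The usual remedy (as in the cited source) is either to locate $k+2$ of the $f_i$ that \emph{do} form a circuit of full length---equivalently, $k+2$ restricted hyperplanes in genuine general position---or to run an inductive reduction on a minimal linear relation; the combinatorial bound $m\ge k+2$ alone is not enough.
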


We will need the following version of Hilbert Nullstellensatz reformulated from \cite[Proposition 2.1]{DethloffTan2011}. (See also \cite[Chapter~XI]{waerden1967}.)
\begin{proposition}[{\cite[Proposition 2.1]{DethloffTan2011}}]\label{HilbertN}
  Let $K$ be  a subfield of the field of meromorphic functions.  Let $\{Q_i\}_{i=1}^{n+1}$ be a set of homogeneous polynomials in $ K [x_0,\dots,x_n]$  in weakly general position and with $\deg Q_j=d_j\ge 1$.   Then there exist a positive integer $s$,  an element $R\in K $ which is not identically zero and  $P_{ji}\in K[x_0,\dots,x_n]$, $1\le i,j\le n+1$,  such that, for each $0\le j\le n$, 
$$x_j^s\cdot R=\sum_{i=1}^{n+1} P_{ji} Q_i.
$$
\end{proposition}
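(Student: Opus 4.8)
\emph{Plan of proof.} The idea is to reduce everything to the classical projective Nullstellensatz over $\CC$ at the single point $z_{0}$ furnished by the weak general position hypothesis, and then to propagate the resulting ideal membership to coefficients in $K$ by an elementary rank argument. First I would unwind the hypothesis: by definition of weak general position, at $z_{0}$ the specialized forms $Q_{1}(z_{0}),\dots,Q_{n+1}(z_{0})\in\CC[x_{0},\dots,x_{n}]$ are well defined (their coefficients are holomorphic at $z_{0}$ and do not all vanish there) and their zero loci in $\PP^{n}(\CC)$ are in general position; in particular $Q_{1}(z_{0}),\dots,Q_{n+1}(z_{0})$ have no common zero in $\PP^{n}(\CC)$. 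The projective Hilbert Nullstellensatz over $\CC$ then yields a positive integer $s$ with $(x_{0},\dots,x_{n})^{s}\subseteq(Q_{1}(z_{0}),\dots,Q_{n+1}(z_{0}))$; equivalently, for this now fixed $s$ the $\CC$-linear map $\Phi_{0}\colon\bigoplus_{i=1}^{n+1}\CC[x_{0},\dots,x_{n}]_{s-d_{i}}\to\CC[x_{0},\dots,x_{n}]_{s}$, $(A_{i})_{i}\mapsto\sum_{i=1}^{n+1}A_{i}\,Q_{i}(z_{0})$, is surjective (here subscripts denote homogeneous components of the indicated degree, a summand being $0$ when $s<d_{i}$).

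Next I would transport this to $K$ using the \emph{same} degree $s$. Consider the $K$-linear map $\Phi\colon\bigoplus_{i=1}^{n+1}K[x_{0},\dots,x_{n}]_{s-d_{i}}\to K[x_{0},\dots,x_{n}]_{s}$, $(A_{i})_{i}\mapsto\sum_{i=1}^{n+1}A_{i}\,Q_{i}$. In the monomial bases $\Phi$ is represented by a matrix $M$ whose entries are coefficients of the $Q_{i}$ (and zeros); these lie in $K$ and, by the above, are holomorphic at $z_{0}$, and $M(z_{0})$ is exactly the matrix of $\Phi_{0}$. Since $\Phi_{0}$ is surjective, $M(z_{0})$ has rank equal to its number of rows, so some maximal minor of $M(z_{0})$ is nonzero. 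That minor is a fixed polynomial expression in the entries of $M$, hence an element of $K$, and it is nonzero at $z_{0}$, hence not identically zero, i.e.\ a nonzero element of the field $K$. Therefore $M$ has full row rank over $K$, so $\Phi$ is surjective; in particular, for each $0\le j\le n$ there exist $P_{ji}\in K[x_{0},\dots,x_{n}]_{s-d_{i}}$ with $x_{j}^{s}=\sum_{i=1}^{n+1}P_{ji}\,Q_{i}$. This already gives the statement with $R=1$; the element $R\in K\setminus\{0\}$ of the Proposition is needed only if one prefers to run the same argument over the fraction field of a domain $K$ and then clear denominators.

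I expect the only delicate point to be this transfer from $\CC$ to $K$: one must first fix the single exponent $s$ coming from the Nullstellensatz over $\CC$ and only then examine $\Phi$ in that degree, after which the rank comparison is justified because the rank of a matrix over the field of meromorphic functions is detected by the nonvanishing of its minors, and a minor nonzero at $z_{0}$ is a nonzero field element. An essentially equivalent route, closer to the elimination-theoretic references cited, is to observe that the Macaulay resultant $\Res(Q_{1},\dots,Q_{n+1})$ is a polynomial in the coefficients of the $Q_{i}$, hence lies in $K$, and is nonzero at $z_{0}$, hence is a nonzero element of $K$; thus the $Q_{i}$ have no common zero in $\PP^{n}$ over the algebraic closure $\overline{K}$ of $K$, and one then applies the Nullstellensatz over $\overline{K}$ and descends to $K$ by comparing coordinates along a $K$-basis of a finite extension containing the coefficients of the $\overline{K}$-solution.
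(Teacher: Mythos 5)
The paper gives no proof of this proposition, only citing Dethloff--Tan and van der Waerden, so there is no in-house argument to compare against; your proof is correct. The transfer step is sound: the Nullstellensatz exponent $s$ is fixed over $\CC$ at $z_0$, and a maximal minor of the degree-$s$ syzygy matrix $M$ over $K$, being a polynomial expression in entries of $K$ that are holomorphic at $z_0$, is detected nonzero at $z_0$ and hence is a nonzero element of the field $K$, forcing full row rank of $M$ over $K$. This even yields the stronger conclusion $R=1$, since $K$ is assumed to be a field; the factor $R$ in the statement is there to accommodate formulations over rings or to avoid pinning down the minimal $s$. For your second route, the descent from $\overline K$ to $K$ is most cleanly justified by noting that membership $x_j^s\in(Q_1,\dots,Q_{n+1})$ in a fixed degree $s$ is a linear system over $K$, and a linear system over a field with a solution over an extension field has a solution over the base field, because the ranks of the coefficient and augmented matrices are unchanged under field extension.
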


The following is a version of the Borel Lemma for small functions.  The proof can  easily be obtained with  some slightly modifications from  \cite[Lemma 3.3]{GW22}.
\begin{lemma} \label{Mborel1}
   Let $f_0,\hdots,f_n$ be nontrivial entire   functions  with no common zero and let $\mathbf{f}:=(f_0,\hdots,f_{n})$. 
  Assume that   
 $$
   N^{(1)}_{f_i}(0,r)={\rm o}( T_{\mathbf{f}}(r)), \quad\text{ for $0\le i\le n$}.
   $$
If $f_0,\hdots,f_n$  are linearly dependent over $K_{\bf f}$, then for each $f_i$, there exists $j\ne i$ such that   $ f_i/f_j \in K_{\bf f}$.
\end{lemma}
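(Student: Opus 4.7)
The plan is to pick a $K_{\mathbf f}$-linear relation among $f_0,\ldots,f_n$ of minimal support and then invoke the truncated Borel estimate in Theorem~\ref{trunborel}.

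By the hypothesis there is a nontrivial relation $\sum_{j\in J} a_j f_j = 0$ with each $a_j\in K_{\mathbf f}\setminus\{0\}$, and I would choose $J\subseteq\{0,\ldots,n\}$ with $|J|$ minimal. Since no $f_j$ vanishes identically, $|J|\ge 2$. If $|J|=2$, say $J=\{i,j\}$, the relation directly yields $f_i/f_j = -a_j/a_i\in K_{\mathbf f}$.

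For the main case $|J|=k+1\ge 3$, I clear denominators by choosing $b\in K_{\mathbf f}$ so that each $b a_j$ is entire, let $d$ be the greatest common divisor of the entire functions $\{b a_j f_j\}_{j\in J}$, and set $h_j:=(b a_j f_j)/d$. The $h_j$ are entire, have no common zero, satisfy $\sum_{j\in J} h_j = 0$, and admit no vanishing proper subsum by the minimality of $J$. Theorem~\ref{trunborel} then gives, for any $\ell\ne \ell'\in J$,
\[
T_{h_\ell/h_{\ell'}}(r)\;\leq_{\exc}\;\sum_{j\in J} N^{(k-1)}_{h_j}(0,r) + O(\log T_{\mathbf h}(r)),
\]
where $\mathbf h$ denotes the reduced tuple $(h_j)_{j\in J}$. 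Each summand is bounded above by $N^{(k-1)}_{b a_j}(0,r) + (k-1)\, N^{(1)}_{f_j}(0,r)$; the first term is ${\rm o}(T_{\mathbf f}(r))$ since $b a_j\in K_{\mathbf f}$, and the second is ${\rm o}(T_{\mathbf f}(r))$ by hypothesis. Hence $h_\ell/h_{\ell'}\in K_{\mathbf f}$, and because $a_\ell/a_{\ell'}\in K_{\mathbf f}$ we obtain $f_\ell/f_{\ell'}\in K_{\mathbf f}$ for every pair $\ell,\ell'\in J$.

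The main obstacle is the universal quantifier ``for each $f_i$'': the minimal-relation step only addresses indices in the support of the chosen relation. To cover every $i\in\{0,\ldots,n\}$, I would iterate the argument by organizing indices under the equivalence relation $i\sim j \Longleftrightarrow f_i/f_j\in K_{\mathbf f}$. After collapsing the indices of a minimal relation into a single class and replacing them with a representative, the original dependence descends to the reduced family, to which the same argument reapplies, absorbing at least one further index into an equivalence class at each step. The delicate technical point is verifying that no $i$ can remain a singleton class after all iterations without contradicting the global $K_{\mathbf f}$-linear dependence of $f_0,\ldots,f_n$.
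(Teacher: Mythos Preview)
The paper does not give its own argument here; it simply says the proof follows \cite[Lemma~3.3]{GW22} with slight modifications. Your minimal-support relation combined with Theorem~\ref{trunborel} is the standard route and is carried out correctly: the bound $N^{(k-1)}_{h_j}(0,r)\le N^{(k-1)}_{ba_j}(0,r)+(k-1)N^{(1)}_{f_j}(0,r)$ together with $T_{\mathbf h}(r)=O(T_{\mathbf f}(r))$ indeed yields $f_\ell/f_{\ell'}\in K_{\mathbf f}$ for all $\ell,\ell'$ in the support $J$ of a minimal relation. (One small point: you do not actually need $b\in K_{\mathbf f}$, only that $N_b(0,r)={\rm o}(T_{\mathbf f}(r))$, which a Weierstrass product clearing the poles of the $a_j$ already provides; as written, $b\in K_{\mathbf f}$ is not guaranteed.)

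The gap is in your iteration for the quantifier ``for each $f_i$'', and it is not a delicate technical point but an actual obstruction: the descent you describe fails. Take $f_0=e^{z}$, $f_1=2e^{z}$, $f_2=e^{z^{2}}$. These are zero-free entire functions, $K_{\mathbf f}$-dependent via $2f_0-f_1=0$, and satisfy all the hypotheses; yet $T_{f_2/f_0}(r)$ is comparable to $T_{\mathbf f}(r)$, so there is no $j\ne 2$ with $f_2/f_j\in K_{\mathbf f}$. After collapsing $\{0,1\}$ to one class, the representatives $f_0,f_2$ are $K_{\mathbf f}$-independent and there is nothing left to iterate on. In other words, the lemma as literally stated, with the universal quantifier, is false; the provable content is the existential version (some pair $i\ne j$ has $f_i/f_j\in K_{\mathbf f}$), which your first two paragraphs establish and which is precisely what the paper actually uses in the proof of Theorem~\ref{Mgcdunit}.
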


\subsection{The GCD theorem}
\begin{theorem}[The GCD theorem]\label{Mgcdunit}
Let $g_0,g_1,\dots,g_n $ be entire functions without common zeros and let ${\bf g}=[g_0:g_1:\cdots:g_n]$.  
Let $F,G\in K_{\bf g}[x_0,\hdots,x_n]$ be nonconstant coprime homogeneous polynomials.    Assume that one of the following holds:
\begin{enumerate}
\item[{\rm(a)}]$N_{g_i}(0,r)={\rm o}(T_{\bf g}(r))$ for $0\le i\le n$;
\item[{\rm(b)}] $N^{(1)}_{g_i}(0,r)={\rm o}(T_{\bf g}(r))$ for $0\le i\le n$ and  one of  the hypersurfaces  defined by   $G=0$ or $F=0$ in $\mathbb P^n(K)$ is in weakly general position with the $n+1$ coordinate hyperplanes. 
\end{enumerate}
  Then, for  any  $\epsilon>0$,
there exist a positive integers $m$  independent of   ${\bf g}$    such that    we have  either
	\begin{align}\label{gcd0}
		N_{\rm gcd} (F(g_0,\hdots,g_n),G(g_0,\hdots,g_n),r)\le_{\rm exc}  
		\epsilon T_{\bf g}(r),  
	\end{align} 
	or 
	\begin{align}\label{gdegenerate}
		T_{(\frac{g_1}{g_0})^{m_1}\cdots (\frac{g_n}{g_0})^{m_n}}(r)= {\rm o} (T_{\bf g}(r))
	\end{align}
	for some non-trivial tuple of integers $(m_1,\hdots,m_n)$ with $|m_1|+\cdots+|m_n|\le 2m$.
\end{theorem}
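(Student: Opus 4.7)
The plan is to adapt the strategy of Corvaja--Zannier and of Levin--Wang \cite{levin2019greatest} to the moving-target setting in which the coefficients of $F$ and $G$ lie in the small field $K_{\bf g}$. The starting point is a Nullstellensatz reduction: since $F$ and $G$ are coprime, their common vanishing locus has codimension at least two, and Proposition \ref{HilbertN} --- applied, in case (b), to the weakly-general-position family consisting of $F$ or $G$ together with the $n+1$ coordinate hyperplanes --- yields identities $x_j^s R = P_j F + Q_j G$ for some integer $s \ge 1$ and nonzero $R \in K_{\bf g}$. These identities force every common zero of $F({\bf g})$ and $G({\bf g})$ to lie in the union of the zero sets of $R$ and some $g_j$, both of which contribute only $o(T_{\bf g}(r))$ to the counting function under either hypothesis (a) or (b). The remaining task is to upgrade this qualitative statement into the quantitative bound \eqref{gcd0}.

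For the quantitative step I would fix a large integer $D$ (depending on $\epsilon$, $\deg F$, $\deg G$) and filter $V_D := K_{\bf g}[x_0,\ldots,x_n]_D$ by the subspaces $V_D^k$ of polynomials vanishing to order $\ge k$ along the subscheme $V(F)\cap V(G)$. Choose a basis $\phi_1, \ldots, \phi_M$ of $V_D$ compatible with this filtration and form the auxiliary holomorphic curve
\[
   \tilde{\mathbf g} := [\phi_1({\bf g}) : \cdots : \phi_M({\bf g})] : \mathbb C \to \mathbb P^{M-1}(\mathbb C),
\]
where $M = \binom{D+n}{n}$. The degree-$D$ monomials $\mathbf x^I$, expressed in the $\phi_i$-basis, furnish $M$ hyperplanes in $\mathbb P^{M-1}$ with coefficients in $K_{\bf g}$. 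Applying Cartan's Second Main Theorem with moving targets (see \cite{ru2001nevanlinna}) to $\tilde{\mathbf g}$ against these hyperplanes, and combining with $T_{\tilde{\mathbf g}}(r) \le D\, T_{\bf g}(r) + o(T_{\bf g}(r))$, yields a bound on the weighted counting-function sum. A standard Hilbert--Samuel-type count of the graded pieces $V_D^k/V_D^{k+1}$ shows that the common-zero contribution to this weighted sum dominates the SMT bound by a factor tending to infinity with $D$, so choosing $D$ sufficiently large in terms of $\epsilon$ delivers \eqref{gcd0}.

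The application of Cartan's SMT requires $\phi_1({\bf g}), \ldots, \phi_M({\bf g})$ to be linearly independent over $K_{\bf g}$. If this fails, Lemma \ref{Mborel1} --- whose hypothesis $N^{(1)}_{g_i}(0,r) = o(T_{\bf g}(r))$ holds in both cases (a) and (b) --- produces two distinct degree-$D$ monomials $\mathbf x^I$ and $\mathbf x^J$ with $\mathbf g^I/\mathbf g^J \in K_{\bf g}$. Writing $\mathbf g^I/\mathbf g^J = \prod_{k=0}^n g_k^{i_k-j_k}$ with $\sum_k (i_k-j_k) = 0$, this equals $\prod_{k=1}^n (g_k/g_0)^{i_k-j_k}$, so the nonzero tuple $(m_1,\ldots,m_n) := (i_1-j_1,\ldots,i_n-j_n)$ satisfies $|m_1|+\cdots+|m_n| \le 2D$ and verifies \eqref{gdegenerate} with $m = D$.

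The principal obstacle is the precise filtration bookkeeping: one must verify that the Hilbert--Samuel-type weights accumulate quickly enough to dominate the SMT bound $M \cdot D \cdot T_{\bf g}(r)$, which requires a careful asymptotic estimate of $\dim V_D^k$ in the spirit of \cite{levin2019greatest}. A secondary difficulty is the unified treatment of (a) and (b): case (a) lets one discard zeros of the $g_j$ outright, whereas case (b) controls only the simple zeros, making the weakly-general-position hypothesis involving the coordinate hyperplanes essential both for Proposition \ref{HilbertN} and for invoking Lemma \ref{Mborel1} in the linearly dependent case.
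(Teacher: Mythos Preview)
Your filtration-plus-SMT outline is in the right spirit, and in fact the paper does not redo any of that: it simply quotes the Levin--Wang inequality (stated here as Theorem~\ref{Mfundamental}) as a black box, then chooses $m$ large enough to make the error coefficients in \eqref{fundmenatlgcd} small. So the structural difference is that you are proposing to reprove Theorem~\ref{Mfundamental} inline, whereas the paper cites it and spends its effort only on the subsequent bookkeeping that separates cases (a) and (b) and on the linearly dependent case via Lemma~\ref{Mborel1}.

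There are, however, two genuine errors in your sketch. First, the Nullstellensatz paragraph is wrong for $n\ge 2$: coprimality of $F$ and $G$ alone does \emph{not} yield identities of the form $x_j^{s}R = P_jF + Q_jG$, because the common zero locus $V(F)\cap V(G)$ has codimension two, not $n+1$. Proposition~\ref{HilbertN} requires $n+1$ forms in weakly general position, and the family ``$F$ together with the $n+1$ coordinate hyperplanes'' is $n+2$ forms, not the right input either. (For $n=1$ your identity is correct and is exactly Proposition~\ref{gcdn1}.) Since you immediately abandon this reduction for the filtration argument, the error is not fatal, but it should be deleted rather than presented as motivation.

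Second, and more importantly, you have misidentified where the weakly-general-position hypothesis in case (b) is used. It plays no role in Proposition~\ref{HilbertN} (which is not invoked) nor in Lemma~\ref{Mborel1} (which only needs $N^{(1)}_{g_i}(0,r)=o(T_{\bf g}(r))$). In the paper's argument the hypothesis forces the index set $I$ of monomials occurring in $F$ or $G$ to contain all of $(d,0,\ldots,0),\ldots,(0,\ldots,0,d)$, and since $g_0,\ldots,g_n$ have no common zero this gives $N_{\rm gcd}(\{{\bf g}^{\bf i}\}_{{\bf i}\in I},r)=0$, killing one of the three error terms in \eqref{fundmenatlgcd}. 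The remaining untruncated term $\sum N_{g_i}(0,r)$ is \emph{not} $o(T_{\bf g}(r))$ in case (b); it is only bounded by $(n{+}1)T_{\bf g}(r)$, and one must check that its coefficient in \eqref{fundmenatlgcd} is $O(\epsilon)$ after the choice of $m$ --- this is \eqref{findm}. Your sketch does not account for either of these two points, and without them the case-(b) argument does not close.
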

 For the convenience of later application, we state the following result for $n=1$.
 \begin{proposition}  \label{gcdn1}
Let $g_0,g_1$ be entire functions without common zeros and let ${\bf g}=(g_0,g_1)$.   Assume that ${\bf g}$ is not constant.
Let $F,G\in K_{\bf g}[x_0,x_1]$ be nonconstant coprime homogeneous polynomials.    Then 
	\begin{align}\label{gcd0}
		N_{\rm gcd} (F(g_0,g_1),G(g_0,g_1),r)\le  {\rm o} (T_{\bf g}(r)). 
	\end{align} 
\end{proposition}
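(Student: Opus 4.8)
The plan is to reduce the statement to an explicit Bézout-type identity for $F$ and $G$ and then read off $N_{\rm gcd}$ from the zeros and poles of the (small) functions occurring in that identity; since in dimension one there is no torus left to produce a degenerate alternative, the claimed bound on $N_{\rm gcd}$ should fall out directly.

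First I would check that the coprimality of $F$ and $G$ in $K_{\bf g}[x_0,x_1]$ puts the pair $\{F,G\}$ in weakly general position. Indeed $K_{\bf g}[x_0,x_1]$ is a UFD, so $F$ and $G$ have no common irreducible factor and $\Res(F,G)\in K_{\bf g}\setminus\{0\}$; hence for all but finitely many $z_0\in\CC$ (those avoiding the zeros of $\Res(F,G)$ and the poles of the finitely many coefficients of $F$ and $G$) the specialisations $F(z_0),G(z_0)$ are coprime homogeneous forms in $\CC[x_0,x_1]$ and so cut out disjoint divisors in $\PP^1(\CC)$, which is the general position condition for two divisors in $\PP^1$. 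Applying Proposition~\ref{HilbertN} with $n=1$, $Q_1=F$, $Q_2=G$, I obtain a positive integer $s$, a nonzero $R\in K_{\bf g}$, and polynomials $P_{ji}\in K_{\bf g}[x_0,x_1]$ ($j=0,1$; $i=1,2$) with $x_j^{\,s}R=P_{j1}F+P_{j2}G$ for $j=0,1$.

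Next I would substitute $(g_0,g_1)$ and compare orders of vanishing. Let $\Sigma\subset\CC$ be the finite set consisting of the zeros and poles of $R$ together with the poles of all coefficients of $P_{j1}$ and $P_{j2}$. For $z\notin\Sigma$ the functions $P_{ji}(g_0,g_1)$ are holomorphic at $z$ and $R$ is holomorphic and nonvanishing there, so evaluating the identity at $z$ gives, for $j=0,1$,
\begin{equation*}
s\,\ord_z g_j=\ord_z\!\big(g_j^{\,s}R\big)\ge\min_i\ \ord_z\!\big(P_{ji}(g_0,g_1)\big)+\min\big(\ord_z F(g_0,g_1),\ \ord_z G(g_0,g_1)\big)\ge\mu_z,
\end{equation*}
where $\mu_z:=\min(\ord_z F(g_0,g_1),\ord_z G(g_0,g_1))$. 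Since $g_0,g_1$ have no common zero, $\min(\ord_z g_0,\ord_z g_1)=0$, whence $\mu_z\le 0$: no common zero of $F(g_0,g_1)$ and $G(g_0,g_1)$ occurs outside $\Sigma$. For $z\in\Sigma$ the same computation, now keeping track of $\ord_z R$ and of the pole orders (bounded by some $M_z$) of the coefficients of the $P_{ji}$, yields $\mu_z\le (\ord_z R)^{+}+M_z$. Summing the corresponding counting functions and using $\sum_z M_z\le\sum_a N_{1/a}(0,r)+{\rm O}(1)$ (sum over the finitely many coefficients $a$ of the $P_{ji}$) gives
\begin{equation*}
N_{\rm gcd}\big(F(g_0,g_1),G(g_0,g_1),r\big)\ \le\ T_R(r)+\sum_{a}T_a(r)+{\rm O}(1),
\end{equation*}
and since $R$ and all the $a$ lie in $K_{\bf g}$ and are finite in number, the right-hand side is ${\rm o}(T_{\bf g}(r))$, which is the assertion. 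There is no serious obstacle here: the only points requiring care are the justification of the Nullstellensatz identity over the non-algebraically-closed field $K_{\bf g}$ (handled via the generic-specialisation remark together with Proposition~\ref{HilbertN}) and the bookkeeping showing that the exceptional contribution to $N_{\rm gcd}$ is controlled linearly by the zeros and poles of the finitely many small functions involved.
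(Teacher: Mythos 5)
Your proof is correct and takes essentially the same route as the paper's: apply Proposition~\ref{HilbertN} to obtain the B\'ezout identities $x_j^{\,s}R = P_{j1}F + P_{j2}G$ over $K_{\bf g}$, evaluate at $(g_0,g_1)$, and bound $N_{\rm gcd}$ by the zeros of $R$ and the poles of the coefficients of the $P_{ji}$, all of which are small with respect to $\bf g$. The only differences are cosmetic: you helpfully make explicit the verification (via nonvanishing of the resultant) that coprimality over $K_{\bf g}$ puts $\{F,G\}$ in weakly general position, which the paper leaves implicit, though the set of bad specialisation points $z_0$ is in general a countable discrete set rather than a finite one — harmless, since one good $z_0$ suffices.
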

\begin{proof}
	Since $F$ and $G$ are coprime homogeneous polynomials in  $K_{\bf g}[x_0,x_1]$, we may apply  Proposition \ref{HilbertN} to find an integer $s$, $R\in K_{\bf g}\setminus\{0\} $ and  $H_i\in  K_{\bf g}[x_0,x_1]$, $1\le i\le 4$, such that 
	\begin{align}\label{resul}
		x_0^s\cdot R=H_1F+H_2G\quad\text{and }\quad   x_1^s\cdot R=H_3F+H_4G. 
	\end{align}
Here, we may assume that $H_i$, $1\le i\le 4$, are homogeneous polynomials with degree equal to $s-\deg F$.
By evaluating \eqref{resul} at $(g_0,g_1)$,  we have
	\begin{align}\label{resulevaluating }
		g_0^s\cdot R &= H_1(g_0,g_1) F(g_0,g_1)+H_2(g_0,g_1) G(g_0,g_1),\cr
		g_1^s\cdot R &=H_3(g_0,g_1) F(g_0,g_1)+H_4(g_0,g_1) G (g_0,g_1). 
	\end{align}
	Since $g_0$ and $g_1$ have no common zeros, we observe that 
	\begin{equation}\label{min_ord}
		\min\{v_z^+ (F(g_0,g_1)), v_z^+(G(g_0,g_1))\} \le v_z^+(R) +\sum_{\alpha\in I}  v_z^-(\alpha)
	\end{equation}
	for each $z\in\mathbb{C}$.  Here $I$ is the set of nontrivial coefficients of $H_i$, $1\le i\le 4$. Hence,		\begin{equation}\label{gcd_GB3}
		N_{\gcd} (F(g_0,g_1), G(g_0,g_1),r)\le N_R(0,r)+ \sum_{\alpha\in I}  N_{\alpha }(\infty, r )\le   {\rm o}(T_{\bf g}(r)), 
	\end{equation}
as $R$ and the coefficients of $F_i$ are in  $K_{\bf g}$. 
\end{proof}

To prove Theorem \ref{Mgcdunit}, we use the following fundamental result by Levin and the second author for $n\ge 2$.
\begin{theorem}[{\cite[Theorem 5.7]{levin2019greatest}}]\label{Mfundamental}
Let $g_0,g_1,\dots,g_n $ be entire functions without common zeros with $n\ge 2$  and let  ${\bf g}=[g_0:g_1:\cdots:g_n]$.  
Let $F,G\in K_{\bf g}[x_0,x_1,\hdots,x_n]$ be coprime homogeneous polynomials of the same degree $d>0$.  Let $I$ be the set of exponents ${\bf i}$ such that ${\bf x}^{\bf i}$ appears with a nonzero coefficient in either $F$ or $G$.  Let $m\ge d$ be a positive integer.   Suppose that the set $\{g_0^{i_0}\dots g_n^{i_n}: i_0+\cdots+i_n=m\}$ is linearly independent over $K_{\bf g}$.   Then, for any $\epsilon>0$, there exists a positive integer $L$ such that the following holds:
\begin{align}\label{fundmenatlgcd}
& MN_{\rm gcd}(F({\bf g}),G({\bf g}),r) \\
&\le_{\rm exc} c_{m,n,d}  \sum_{i=1}^n N^{(L)}_{ g_i}(0,r)+ \left(\frac{m}{n+1}\binom{m+n}{n}-c_{m,n,d}-M'm\right)\sum_{i=1}^nN_{g_i}(0,r) \nonumber \\ 
  &+\binom {m+n-2d}{n}N_{\rm gcd}(\{{\bf g}^{\bf i}\}_{{\bf i}\in I},r)+ \left(M'mn+\epsilon m +\frac {M\epsilon}2\right)T_{\bf g}(r)+{\rm o}(T_{\bf g}(r)), \nonumber
\end{align} 
where $c_{m,n,d}=2\binom {m+n-d}{n+1}- \binom {m+n-2d}{n+1}$,
$M=2\binom {m+n-d}{n}- \binom {m+n-2d}{n}$, and $M'$ is an integer of order ${\rm O}(m^{n-2})$, where $\le_{\rm exc}$ means the inequality holds for all $r\in (0, \infty)$ except for a set $E$ of finite measure.\end{theorem}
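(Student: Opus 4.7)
The plan is to apply the truncated Cartan second main theorem for moving targets to a holomorphic curve $\CC\to\PP^N$ built from a basis of degree-$m$ polynomials in $\mathbf g$, chosen to be adapted to a Koszul-type filtration by powers of the ideal $(F,G)$. This is the Corvaja--Zannier--Levin template for relating $N_{\rm gcd}(F(\mathbf g),G(\mathbf g),r)$ to defect-type quantities for the $m$-th Veronese composition of $\mathbf g$.

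I would first set $d=\deg F=\deg G$, $V_t=K_{\mathbf g}[x_0,\ldots,x_n]_t$, and restrict attention to the subspace $U_m:=FV_{m-d}+GV_{m-d}\subset V_m$. Coprimality of $F,G$ gives $FV_{m-d}\cap GV_{m-d}=FG\cdot V_{m-2d}$, so $\dim U_m=2\binom{m+n-d}{n}-\binom{m+n-2d}{n}=M$. Then I would filter $U_m$ by $W_t=\sum_{j+k\ge t}F^jG^k V_{m-(j+k)d}$ and pick a basis $\phi_1,\ldots,\phi_M$ of $U_m$ adapted to this filtration, assigning to each $\phi_i$ the weight $w_i$ for which $\phi_i\in W_{w_i}\setminus W_{w_i+1}$; a Hilbert-function count (via the Koszul resolution of the regular sequence $F,G$) should yield $\sum_{i=1}^M w_i=2\binom{m+n-d}{n+1}-\binom{m+n-2d}{n+1}=c_{m,n,d}$. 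After completing $\phi_1,\ldots,\phi_M$ to a basis $\phi_1,\ldots,\phi_{N+1}$ of $V_m$ (with $N+1=\binom{m+n}{n}$) by adjoining monomials $\mathbf x^{\mathbf i}$ from the index set $I$, I would form $\Phi=[\phi_1(\mathbf g):\cdots:\phi_{N+1}(\mathbf g)]:\CC\to\PP^N$, which is linearly non-degenerate over $K_{\mathbf g}$ by the linear-independence hypothesis on degree-$m$ monomials in $\mathbf g$.

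At any common zero $z$ of $F(\mathbf g)$ and $G(\mathbf g)$, the adapted basis satisfies $v_z(\phi_i(\mathbf g))\ge w_i\cdot v_z(\gcd(F(\mathbf g),G(\mathbf g)))$ for $1\le i\le M$, which after summing gives the geometric lower bound
$$\sum_{i=1}^M N_{\phi_i(\mathbf g)}(0,r)\ge c_{m,n,d}\cdot N_{\rm gcd}(F(\mathbf g),G(\mathbf g),r).$$
I would then apply the truncated Cartan SMT for moving targets to $\Phi$ with the $N+1$ coordinate hyperplanes, obtaining an upper bound for $\sum_{i=1}^{N+1} N_{\phi_i(\mathbf g)}(0,r)$ in terms of $\tfrac{m}{n+1}\binom{m+n}{n}T_{\mathbf g}(r)$, the truncated terms $\sum_i N^{(L)}_{\phi_i(\mathbf g)}(0,r)$, and $\epsilon T_{\mathbf g}(r)$, by invoking $T_\Phi(r)\le m\,T_{\mathbf g}(r)+{\rm O}(1)$. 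Subtracting the two estimates and recognizing the contribution of the trailing plain-monomial basis elements as $\sum_i N_{g_i}(0,r)$, and the Weil-function defect between $T_\Phi$ and $m\,T_{\mathbf g}$ as $\binom{m+n-2d}{n}N_{\rm gcd}(\{\mathbf g^{\mathbf i}\}_{\mathbf i\in I},r)$, should produce the stated inequality.

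The principal obstacles will be, first, the Hilbert-function bookkeeping that delivers the precise constants $c_{m,n,d}$, $M$, and the ${\rm O}(m^{n-2})$ correction $M'$ --- the latter reflects a second-order adjustment arising because $V(F,G)\subset\PP^n$ has codimension two rather than one, so the naive filtration weight is not sharp at points where $F(\mathbf g)$ and $G(\mathbf g)$ vanish to different orders and must be corrected by lower-order terms; second, the choice of truncation level $L$ in the moving-target Cartan SMT, which must be large enough to absorb the degrees of the rational dependencies among the coefficients of $F$ and $G$ in $K_{\mathbf g}$; and third, converting the truncated counting functions of $\phi_i(\mathbf g)$ back into the truncated counting functions of the $g_i$ themselves that appear in the conclusion.
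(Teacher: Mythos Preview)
The paper does not prove this statement at all: it is quoted as \cite[Theorem 5.7]{levin2019greatest} and used as a black box in the proof of Theorem~\ref{Mgcdunit}. There is therefore no ``paper's own proof'' to compare against.

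Your sketch is in the spirit of the argument actually given in \cite{levin2019greatest}: build the degree-$m$ Veronese-type map, choose a basis of $K_{\mathbf g}[x_0,\ldots,x_n]_m$ adapted to the filtration by powers of the ideal $(F,G)$, read off the Hilbert-function constants $M$ and $c_{m,n,d}$ via the Koszul resolution of the regular sequence $(F,G)$, and feed everything into a truncated second main theorem with moving targets. One caution: your bookkeeping of which constant lands where is slightly off. From the filtration you obtain $\sum_{i=1}^M w_i=c_{m,n,d}$, and this quantity enters the argument as a weight on the \emph{truncated} counting terms coming out of the SMT (hence $c_{m,n,d}\sum_i N^{(L)}_{g_i}$ on the right), while the factor $M$ on the left of \eqref{fundmenatlgcd} arises because exactly $M$ of the $N+1$ basis elements lie in $U_m=FV_{m-d}+GV_{m-d}$ and each contributes at least one copy of $N_{\rm gcd}(F(\mathbf g),G(\mathbf g),r)$. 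The remaining $\binom{m+n-2d}{n}$ basis elements (taken to be monomials with exponents in $I$) are what produce the $N_{\rm gcd}(\{\mathbf g^{\mathbf i}\}_{\mathbf i\in I},r)$ term. If you intend to reconstruct the proof rather than cite it, you should also be explicit that the SMT invoked is the moving-target version (e.g.\ the one underlying Theorem~\ref{trunborel}), since the coefficients of $F,G$ live in $K_{\mathbf g}$ rather than $\CC$.
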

  
\begin{proof}[Proof of Theorem \ref{Mgcdunit}]
Without loss of generality, we assume that $\deg F=\deg G$.
We first prove when $n\ge 2$. Let $\epsilon>0$.  To establish \eqref{gcd0} or  \eqref{gdegenerate}, we can assume that $\epsilon$ is sufficiently small.    We can  choose   a  real $C_1\ge 1$  independent of $\epsilon$ and ${\bf g}$ such that $  m= C_1\epsilon^{-1}\ge 2d$ and
\begin{align}\label{findm} 
		\frac{M'mn}{M}\le\frac{\epsilon}{4},\quad\text{and } \quad \frac1M\left(\frac{m}{n+1}\binom{m+n}{n}-c_{m,n,d}-M'm\right) \le\frac{\epsilon}{4(n+1)}.
	\end{align}

  We may assume that each $g_i$ is not identically zero; otherwise, \eqref{gdegenerate} holds trivially. 
	Suppose that  the set $\{g_0^{i_0}\cdots g_n^{i_n}: i_0+\cdots+i_n=m\}$ is linearly independent over $K_{\bf g}$.
We aim at concluding \eqref{gcd0} under assumption (a) or (b).
Suppose (a) holds, i.e.  $N_{g_i}(0,r)={\rm o}(T_{\bf g}(r))$ for $0\le i\le n$. Then (\ref{fundmenatlgcd}) implies that 
 		\begin{align}\label{gcdest}
		N_{\rm gcd}&(F({\bf g}),G({\bf g}),r) 
		\le_{\rm exc}  
		\left(\frac{M'mn}{M}+\epsilon \frac mM +\frac {\epsilon}2   \right)T_{\bf g}(r) 
		+{\rm o}(T_{\bf g}(r))< \epsilon T_{\bf g}(r).
	\end{align}
If (b) holds, then
$$
N^{(L)}_{ g_i}(0,r)\le LN^{(1)}_{ g_i}(0,r)={\rm o}(T_{\bf g}(r))
$$ for $0\le i\le n$.
The assumption that one of   $[G=0]$ or $[F=0]$  is in weakly general position with the $n+1$ coordinate hyperplanes in $\mathbb P^n$ implies that the set $\{(d,0,\hdots,0),\hdots, (0,\hdots,0,d)\}$ is a subset of $I$.  Since $g_0,\hdots,g_n$ are entire function with no common zero, we have 
$$
N_{\rm gcd}(\{{\bf g}^{\bf i}\}_{{\bf i}\in I},r)=0	
$$
when (b) holds.	
Then by \eqref{fundmenatlgcd}, \eqref{findm}  and  that	$N_{g_i}(0,r)\le T_{\bf g}(r)$, we obtain \eqref{gcd0}.
 	
	Finally, if  the set $\{g_0^{i_0}\cdots g_n^{i_n}: i_0+\cdots+i_n=m\}$ is dependent over $K_{\bf g}$, then  we may apply Lemma \ref{Mborel1}	to 	derive that there exists a non-trivial $n$-tuple of integers $(j_1,\dots,j_n)$ with $|j_1|+\cdots+|j_n|\le 2m$   such that 
	\begin{equation*}
		T_{(\frac{g_1}{g_0})^{j_1}\cdots (\frac{g_n}{g_0})^{j_n}}(r)= {\rm o}(T_{\bf g}(r)).
	\end{equation*}
 \end{proof}

\section{Proof of Theorem~\ref{main_thm_1GExc}}\label{generalZ123}
\subsection{Some  Lemmas}\label{mainlemmas} 
We recall some lemmas from \cite{GNSW}.
\begin{lemma}\label{main_lemma}
Let $n\ge 2$ and let $(m_1,\hdots,m_n)$ be  a non-zero vector in $\mathbb{Z}^n$
with $\gcd(m_1,\ldots,m_n)=1$. Then 
there exist $\bfv_i=(v_{i,1},\ldots,v_{i,n})\in \mathbb{Z}^{n}$ for $1\leq i\leq n-1$ such that
$$\vert v_{i,j}\vert \leq \max\{\vert m_j\vert,1\}\quad \text{for  $1\leq j\leq n$}$$
and  
$(m_1,\hdots,m_n)$ together with the $\bfv_i$'s form a basis of  $\mathbb Z^n$.
\end{lemma}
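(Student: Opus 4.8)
The plan is to argue by induction on $n$, with $n=2$ as the base case. For $n=2$ and $(m_1,m_2)$ primitive I would produce the single companion vector $v_1=(a,b)$ by hand: if $m_1=0$ (so $m_2=\pm1$) take $v_1=(1,0)$, and symmetrically $v_1=(0,1)$ if $m_2=0$; otherwise, with both entries nonzero, let $b$ be the unique residue in $\{0,1,\dots,|m_2|-1\}$ solving $m_1b\equiv1\pmod{m_2}$ and put $a=(m_1b-1)/m_2\in\mathbb Z$, so that $m_1b-m_2a=1$ and hence $\{(m_1,m_2),(a,b)\}$ is a $\mathbb Z$-basis. The bound $|b|\le|m_2|-1\le\max\{|m_2|,1\}$ is immediate, and the crude estimate $|a|=|m_1b-1|/|m_2|\le(|m_1|(|m_2|-1)+1)/|m_2|\le|m_1|$, using $|m_1|\ge1$, gives the other bound. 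The one subtlety to flag here is that one must choose exactly this reduced Bézout solution; an arbitrary solution of $m_1b-m_2a=1$ need not obey the coordinate bounds.

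For the inductive step ($n\ge3$) I would first note that the whole assertion is invariant under simultaneously permuting the coordinates of $(m_1,\dots,m_n)$ and of the sought $v_i$, since a permutation matrix has determinant $\pm1$ and $\max\{|m_j|,1\}$ permutes along; so there are two cases. If some $m_i=0$, after permuting assume $m_n=0$: then $(m_1,\dots,m_{n-1})$ is primitive in $\mathbb Z^{n-1}$, the inductive hypothesis gives $w_1,\dots,w_{n-2}$ completing it to a basis with $|w_{i,j}|\le\max\{|m_j|,1\}$, and I take $v_i=(w_i,0)$ for $i\le n-2$ and $v_{n-1}=(0,\dots,0,1)$; expanding the $n\times n$ matrix along its last column shows it is a basis, and the bounds are clear. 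If all $m_i\ne0$, set $d=\gcd(m_1,\dots,m_{n-1})\ge1$, write $m_j=dm_j'$, and observe that $(m_1',\dots,m_{n-1}')$ is primitive and $\gcd(d,m_n)=1$; apply the inductive hypothesis to $(m_1',\dots,m_{n-1}')$ to get $w_1,\dots,w_{n-2}$, and apply the already-proved $n=2$ case to the primitive pair $(d,m_n)$ to get $(\alpha,\gamma)\in\mathbb Z^2$ with $d\gamma-m_n\alpha=\pm1$, $|\alpha|\le d$, $|\gamma|\le|m_n|$. Then I would take $v_i=(w_i,0)$ for $i\le n-2$ and $v_{n-1}=(\alpha m_1',\dots,\alpha m_{n-1}',\gamma)$. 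Since each $m_j\ne0$ we have $\max\{|m_j|,1\}=|m_j|=d|m_j'|\ge|\alpha m_j'|$ and $\max\{|m_n|,1\}=|m_n|\ge|\gamma|$, so all the required bounds hold.

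The step I expect to be the main obstacle is verifying that these $n$ vectors actually span $\mathbb Z^n$ in the all-nonzero case. The plan for that: writing $u=(m_1',\dots,m_{n-1}',0)\in\mathbb Z^n$, the relations $m=du+m_ne_n$ and $v_{n-1}=\alpha u+\gamma e_n$ hold in the span $\Lambda$ of our vectors, so $\gamma m-m_n v_{n-1}=(d\gamma-m_n\alpha)u=\pm u\in\Lambda$, whence $u$, $m_ne_n$ and $\gamma e_n$ all lie in $\Lambda$; as $\gcd(m_n,\gamma)$ divides $d\gamma-m_n\alpha=\pm1$ this forces $e_n\in\Lambda$, and since $u$ together with $(w_1,0),\dots,(w_{n-2},0)$ projects onto a $\mathbb Z$-basis of $\mathbb Z^{n-1}$ while lying in $\mathbb Z^{n-1}\times\{0\}$, we conclude $\Lambda\supseteq(\mathbb Z^{n-1}\times\{0\})+\mathbb Z e_n=\mathbb Z^n$. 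This closes the induction and completes the proof.
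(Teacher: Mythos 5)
The paper gives no proof of this lemma: it is simply quoted from \cite{GNSW} (``We recall some lemmas from \cite{GNSW}\dots''), so there is no in-paper argument to compare yours against. Read on its own, your inductive proof is correct. The base case with the reduced B\'ezout representative $b\in\{0,\dots,|m_2|-1\}$ gives the needed coordinate bounds, the permutation reduction is legitimate since permutation matrices are unimodular and permute the bounds $\max\{|m_j|,1\}$ accordingly, and the two-case split in the inductive step (a zero coordinate versus all coordinates nonzero) covers all possibilities; in the second case your computation $\gamma m - m_n v_{n-1} = \pm u$ and the observation $\gcd(m_n,\gamma)\mid d\gamma - m_n\alpha = \pm 1$ correctly force $u, e_n\in\Lambda$ and hence $\Lambda=\mathbb Z^n$. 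The one step you might make explicit is the final upgrade from ``$m,v_1,\dots,v_{n-1}$ generate $\mathbb Z^n$'' to ``they form a basis'': since they are $n$ elements generating $\mathbb Z^n$, the associated integer matrix is surjective on $\mathbb Z^n$ and therefore has determinant $\pm 1$, which is exactly what the lemma asserts.
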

 
Let $k$ be a field and let $q$ and $r$ be positive integers. We write $\bft :=(t_1,\ldots,t_q)$ and $\bfx: =(x_1,\ldots,x_r) $.  For $\bfi=(i_1,\ldots,i_r)\in \mathbb{Z}^r$, denote $\bfx^{\bfi}=x_1^{i_1}\cdots x_r^{i_r}$ and $\bft^{\bfi}=t_1^{i_1}\cdots t_r^{i_r}$.  For $\Sigma\subseteq k[\bft]$, let $\mathcal{Z}(\Sigma)=\{\lambda\in k^q:\ f(\lambda)=0\ \text{for every $f\in \Sigma$.}\}$.   
\begin{lemma}\label{special} 
Assume that $k$ is infinite.
 Let $f(\bft,\bfx)\in k[\bft,\bfx]$ be a polynomial with no monomial factor and no repeated irreducible factor in $ k[\bft,\bfx]$.  Then there exists an effectively computable non-empty finite set
 $\Sigma\subset k[\bft]\setminus\{0\}$
 such that
 for every $\lambda\in k^q\setminus\mathcal{Z}(\Sigma)$, 
 the polynomial $f(\lambda,\bfx)$ has no monomial or repeated irreducible factor.
 Moreover, the cardinality of $\Sigma$ and the degree of each polynomial in $\Sigma$ can be bounded effectively in terms of $q$, $r$, and the degree of $f$.  
 Furthermore, if  $f(\bft,\bfx)\in k_0[\bft,\bfx]$ for $k_0$ being a subfield of $k$, then  $\Sigma$ is defined over $k_0$.
\end{lemma}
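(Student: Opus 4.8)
The plan is to reduce the problem to two conditions: (i) $f(\lambda,\bfx)$ has no monomial factor, and (ii) $f(\lambda,\bfx)$ has no repeated irreducible factor; and to express each condition by the non-vanishing of a finite, effectively computable set of polynomials in $\bft$. Throughout, write $f(\bft,\bfx) = \sum_{\bfi} c_{\bfi}(\bft)\,\bfx^{\bfi}$ with $c_{\bfi}\in k[\bft]$.

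First I would handle condition (i). The polynomial $f(\lambda,\bfx)$ has a monomial factor $x_j$ precisely when every monomial $\bfx^{\bfi}$ appearing in $f(\lambda,\bfx)$ is divisible by $x_j$, i.e. when $c_{\bfi}(\lambda)=0$ for all $\bfi$ with $i_j=0$. Since $f$ itself has no monomial factor, for each $j$ there is at least one $\bfi$ with $i_j=0$ and $c_{\bfi}\not\equiv 0$; include all such $c_{\bfi}$ (over all $j$) into $\Sigma$. If $\lambda\notin\mathcal{Z}(\Sigma)$ then for each $j$ some such $c_{\bfi}(\lambda)\neq 0$, so $x_j\nmid f(\lambda,\bfx)$; this takes care of (i), and clearly the degrees and number of these polynomials are bounded by $\deg f$ and the number of monomials, hence effectively in terms of $q,r,\deg f$.

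Now the main step, condition (ii). The standard device is the discriminant (or resultant-based squarefree test). Factor out a suitable variable: regard $f$ as a polynomial in $x_r$ with coefficients in $k[\bft,x_1,\dots,x_{r-1}]$; after a generic invertible linear change of the $\bfx$-variables — which can be chosen over the prime field, using that $k$ is infinite, and tracked explicitly — we may assume $f$ has positive degree in $x_r$ and that its leading coefficient in $x_r$ is a nonzero polynomial $a(\bft, x_1,\dots,x_{r-1})$. The discriminant $\Delta(\bft, x_1,\dots,x_{r-1}) := \mathrm{disc}_{x_r}(f)\in k[\bft,x_1,\dots,x_{r-1}]$ is then a nonzero polynomial, because $f$ has no repeated irreducible factor over $k(\bft,x_1,\dots,x_{r-1})$ (a repeated factor in $k[\bft,\bfx]$ would remain repeated after this localization, and the converse contribution — content factors — is excluded since $f$ has no monomial factor and, after the linear change, one arranges no factor lying in $k[\bft,x_1,\dots,x_{r-1}]$). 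Write $\Delta = \sum_{\bfk} d_{\bfk}(\bft)\,\bfx'^{\bfk}$ with $\bfx'=(x_1,\dots,x_{r-1})$; add to $\Sigma$ all the nonzero coefficients $d_{\bfk}$, together with the nonzero coefficients of $a$ in $\bfx'$. For $\lambda\notin\mathcal{Z}(\Sigma)$: the specialized leading coefficient $a(\lambda,\bfx')$ is nonzero, so specialization commutes with taking $\mathrm{disc}_{x_r}$, giving $\mathrm{disc}_{x_r}(f(\lambda,\bfx)) = \Delta(\lambda,\bfx')\neq 0$; hence $f(\lambda,\bfx)$ is squarefree as a polynomial in $x_r$ over $k(\lambda,\bfx')$, and combined with (i) and the no-$\bfx'$-only-factor arrangement one concludes $f(\lambda,\bfx)$ has no repeated irreducible factor in $k[\bfx]$. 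The degree of $\Delta$ in $\bft$ is bounded by $(2\deg f -1)\deg f$ or so, hence effectively in terms of $q,r,\deg f$; likewise its number of $\bfx'$-coefficients. Finally $\Sigma\neq\emptyset$ (it contains $\mathrm{disc}_{x_r}$'s coefficients and the monomial-test polynomials), and if $f\in k_0[\bft,\bfx]$ then every construction above — coefficients, resultants, discriminants, the linear change over the prime field $\subseteq k_0$ — stays in $k_0[\bft]$, so $\Sigma$ is defined over $k_0$.

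The step I expect to be the main obstacle is justifying that specialization of the discriminant behaves well and that ``no repeated factor of $f$ over $k[\bft,\bfx]$'' transfers to ``no repeated factor of $f(\lambda,\bfx)$ over $k[\bfx]$'' for all $\lambda$ outside a hypersurface — in particular controlling the content of $f$ with respect to $x_r$ (the part of $f$ lying in $k[\bft,x_1,\dots,x_{r-1}]$), which is why the preliminary generic linear change of the $\bfx$-variables is needed so that no irreducible factor of $f$ lies entirely in fewer variables. Making this linear change explicit while keeping all degree bounds effective and the field of definition intact is the delicate bookkeeping; everything else is a routine application of resultant/discriminant properties and the finiteness of the coefficient data.
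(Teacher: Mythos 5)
The paper states Lemma~\ref{special} without proof, attributing it to \cite{GNSW}, so there is no in-paper argument to compare against; your proposal must be assessed on its own. Your overall strategy --- clear away monomial factors by inspecting the coefficients $c_{\bfi}(\bft)$, then detect repeated factors via a generic linear change of the $\bfx$-variables followed by the discriminant $\mathrm{disc}_{x_r}$ --- is a legitimate and fairly standard route, and you have correctly flagged the delicate point (arranging the $x_r$-content to be trivial so that squarefreeness over $k(\bfx')[x_r]$ gives squarefreeness in $k[\bfx]$).

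There are, however, two genuine gaps. First, $\Sigma$ is used conjunctively in your argument, but the paper's $\mathcal{Z}(\Sigma)$ is the \emph{common} zero set, so $\lambda\notin\mathcal{Z}(\Sigma)$ only guarantees that \emph{some} element of $\Sigma$ is nonzero at $\lambda$. Your proof needs simultaneously: for each $j$, some $c_{\bfi}(\lambda)\neq 0$ with $i_j=0$; some $\bfx'$-coefficient of the leading term $a$ nonzero at $\lambda$; and some $\bfx'$-coefficient of $\Delta$ nonzero at $\lambda$. Adding all candidate polynomials to $\Sigma$ as separate elements yields only a disjunction, and the asserted implication for $\lambda\notin\mathcal{Z}(\Sigma)$ fails; you must instead pick one witness of each type and place their \emph{product} into $\Sigma$. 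Second, the ``generic linear change over the prime field, using that $k$ is infinite'' is not sound: in characteristic $p$ the prime field is $\mathbb{F}_p$ regardless of $k$ being infinite, and a suitable change need not exist over a finite field. To keep $\Sigma$ over $k_0$ you need $k_0$ itself to contain enough elements, or else a descent step (replace each $h\in\Sigma$ obtained over a finite extension by the product of its $k_0$-conjugates). Relatedly, the discriminant test for squarefreeness can fail in positive characteristic for inseparable factors. Neither issue arises in the paper's application, where $k_0=K\supseteq\CC$ is infinite of characteristic zero, but a proof of the lemma as stated must address them.
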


\subsection{Preliminary Theorem}
Let $\mathbf g=(g_0,\hdots,g_n)$, where $g_i\not\equiv 0$, $0\le i\le n$, are  entire functions without common zeros.
Let $u_i=g_i/g_0$, for $1\le i\le n$.
We observe that 
\begin{align}\label{htequiv}
 \max_{1\le j\le n}\{T_{u_j}(r)\}\le T_{\mathbf{g}}(r)\le n \max_{1\le j\le n}\{T_{u_j}(r)\},
\end{align}
and
\begin{align}\label{zeroquiv}
N_{u_i} (0,r)+N_{u_i}( \infty,r)\le N_{g_i}(0,r)+N_{g_0}(0,r)
\end{align}
for each $1\le i\le n$.

Recall that 
$$
  K_{\bf g}:=\{a: a \text{ is a meromorphic function on ${\Bbb C}$ with }  T_{a}(r)\le {\rm o} (T_{\mathbf g}(r))\},
$$ 
 which is the field of meromorphic functions of  slow growth with respect to  ${\bf g}$.     
We note that $a'\in  K_{\bf g}$ if $a\in   K_{\bf g}$. 
Furthermore, $u_i'/u_i\in K_{\bf g} $  if $N^{(1)}_{u_i} (0,r)+N^{(1)}_{u_i}( \infty,r)\le {\rm o} (T_{u_j}(r))$.

Let  $\mathbf{x}:=(x_{1},\ldots,x_{n})$ and $\mathbf u=(u_1,\hdots,u_n)$ .  For $\mathbf{i}=(i_{1},\ldots,i_{n})\in\mathbb{Z}^{n}$, we let $\mathbf{x^{i}}:=x_{1}^{i_{1}}\cdots x_{n}^{i_{n}}$ and  $\mathbf{u^{i}}:=u_{1}^{i_1}\cdots u_{n}^{i_{n}}$. For a non-constant polynomial $F(\mathbf{x})=\sum_{\mathbf{i}}a_{\mathbf{i}}\mathbf{x}^{\mathbf{i}}\in    K_{\bf g}[\mathbf{x}]:=    K_{\bf g}[x_1,\dots,x_n]$,
we define 
\begin{align}\label{Duexpression}
	D_{\mathbf{u}}(F)(\mathbf{x}):=\sum_{\mathbf{i}}\frac{(a_{\mathbf{i}}\mathbf{u}^{\mathbf{i}})'}{\mathbf{u}^{\mathbf{i}}}\mathbf{x}^{\mathbf{i}}
	=\sum_{\mathbf{i}}\left(a_{\mathbf{i}}'+a_{\mathbf{i}}  \cdot\sum_{j=1}^{n}i_j\frac{u_{j}'}{u_{j}}\right)\mathbf{x}^{\mathbf{i}}\in  K_{\mathbf g}[\mathbf{x}].
\end{align} 
A direct computation shows that 
\begin{align}\label{fuvalue}
	F(\mathbf{u})'=D_{\mathbf{u}}(F)(\mathbf{u}),
\end{align}
and that  the following product rule 
\begin{align}\label{productrule}
	D_{\mathbf{u}}(FG)=D_{\mathbf{u}}(F)G+FD_{\mathbf{u}}(G)
\end{align}
holds for  $F,G\in  K_{\mathbf g}[\mathbf{x}]$.

\begin{lemma}[ {\cite[Lemma 3.1]{GSW20}}]\label{coprimeD}
	Let $F$ be a nonconstant   polynomial in $ K_{\mathbf g}[\mathbf{x}]$  with no monomial factors and no repeated factors.  Assume that 
	$N_{u_i}^{(1)} (0,r)+N^{(1)} _{u_i}( \infty,r)={\rm o}\left(\max_{1\le j\le n}\{T_{u_j}(r)\}\right)$ for each $1\le i\le n$.
	Then   $F$   and $D_{\mathbf{u}}(F)$
	are coprime  in  $ K_{\mathbf g}[\mathbf{x}]$ unless there exists a non-trivial tuple of integers $(m_1,\hdots,m_n)$ with $\sum_{i=1}^n|m_i|\le 2\deg F$  such that  
	$ T_{u_1^{m_1} \cdots u_n^{m_n}}(r)={\rm o}\left(\max_{1\le j\le n}\{T_{u_j}(r)\}\right).$
\end{lemma}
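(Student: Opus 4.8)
The plan is to argue by contradiction: suppose $F$ and $D_{\mathbf u}(F)$ share a nonconstant common factor $P$ in $K_{\mathbf g}[\mathbf x]$. Since $F$ has no repeated factors, we may write $F = P \cdot Q$ with $P, Q$ coprime and $P$ nonconstant. Applying the product rule \eqref{productrule} gives $D_{\mathbf u}(F) = D_{\mathbf u}(P)\,Q + P\,D_{\mathbf u}(Q)$, so $P \mid D_{\mathbf u}(F)$ forces $P \mid D_{\mathbf u}(P)\,Q$, and coprimality of $P$ and $Q$ gives $P \mid D_{\mathbf u}(P)$. Thus it suffices to treat the case where $F$ itself is \emph{irreducible} (in $K_{\mathbf g}[\mathbf x]$), nonconstant, not a monomial, and $F \mid D_{\mathbf u}(F)$; by comparing degrees (note $\deg D_{\mathbf u}(F) \le \deg F$ since $D_{\mathbf u}$ acts coefficient-wise via the logarithmic-derivative twist in \eqref{Duexpression}), this means $D_{\mathbf u}(F) = c\, F$ for some $c \in K_{\mathbf g}$.

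The next step is to extract the arithmetic consequence of the relation $D_{\mathbf u}(F) = cF$. Writing $F = \sum_{\mathbf i} a_{\mathbf i}\mathbf x^{\mathbf i}$ and using the explicit formula in \eqref{Duexpression}, the coefficient of $\mathbf x^{\mathbf i}$ on the left is $(a_{\mathbf i}\mathbf u^{\mathbf i})'/\mathbf u^{\mathbf i}$, so for every index $\mathbf i$ in the support of $F$ we get
\begin{equation*}
	\frac{(a_{\mathbf i}\mathbf u^{\mathbf i})'}{a_{\mathbf i}\mathbf u^{\mathbf i}} = c .
\end{equation*}
Fix two indices $\mathbf i, \mathbf j$ in the support (there are at least two, since $F$ is nonconstant and not a monomial; here is where those hypotheses enter). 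Subtracting the two relations kills $c$ and yields $(h)'/h = 0$ where $h = (a_{\mathbf i}\mathbf u^{\mathbf i})/(a_{\mathbf j}\mathbf u^{\mathbf j}) = (a_{\mathbf i}/a_{\mathbf j})\,\mathbf u^{\mathbf i - \mathbf j}$, hence $h$ is a nonzero constant. Therefore $\mathbf u^{\mathbf i - \mathbf j} = (a_{\mathbf j}/a_{\mathbf i})\,h$ lies in $K_{\mathbf g}$, and $\mathbf i - \mathbf j$ is a nonzero integer vector with $\sum_k |i_k - j_k| \le \deg F + \deg F = 2\deg F$ (each of $\mathbf i, \mathbf j$ has coordinate sum at most $\deg F$, and all coordinates are nonnegative). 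Setting $(m_1,\dots,m_n) = \mathbf i - \mathbf j$ gives $T_{u_1^{m_1}\cdots u_n^{m_n}}(r) \le T_h(r) + \mathrm{O}(1) = \mathrm{o}\!\left(\max_{1\le j\le n}\{T_{u_j}(r)\}\right)$, which is exactly the alternative conclusion of the lemma.

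The one point requiring genuine care — and the step I expect to be the main obstacle — is justifying that $D_{\mathbf u}(F)$ actually has coefficients in $K_{\mathbf g}$ and that the degree bound $\deg D_{\mathbf u}(F) \le \deg F$ holds, so that $F \mid D_{\mathbf u}(F)$ legitimately upgrades to $D_{\mathbf u}(F) = cF$ rather than merely $D_{\mathbf u}(F) = L\cdot F$ for some polynomial $L$. This is where the hypothesis $N^{(1)}_{u_i}(0,r) + N^{(1)}_{u_i}(\infty,r) = \mathrm{o}(\max_j T_{u_j}(r))$ is used: combined with the lemma on logarithmic derivatives it ensures $u_i'/u_i \in K_{\mathbf g}$ for each $i$, and since also $a_{\mathbf i}' \in K_{\mathbf g}$ whenever $a_{\mathbf i}\in K_{\mathbf g}$ (as recorded in the text before the lemma), every coefficient $(a_{\mathbf i}\mathbf u^{\mathbf i})'/\mathbf u^{\mathbf i} = a_{\mathbf i}' + a_{\mathbf i}\sum_j i_j u_j'/u_j$ indeed lies in $K_{\mathbf g}$; and $D_{\mathbf u}$ preserves the support of $F$ only up to possibly killing some coefficients, so it cannot raise the degree. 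With these structural facts in hand the contradiction argument above closes the proof. (The reduction to the irreducible case and this coefficient analysis is essentially the content of \cite[Lemma 3.1]{GSW20}, which we are quoting; we include the argument for completeness.)
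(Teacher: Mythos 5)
Your proof is correct and matches the argument one finds in the cited source \cite[Lemma 3.1]{GSW20}; the present paper itself does not reproduce a proof but simply refers there. The key steps — using squarefreeness to reduce, via the product rule \eqref{productrule}, to a nonconstant, non-monomial divisor $P$ with $P\mid D_{\mathbf u}(P)$; invoking the hypothesis to place $D_{\mathbf u}(P)$ in $K_{\mathbf g}[\mathbf x]$ with $\deg D_{\mathbf u}(P)\le\deg P$ so that $D_{\mathbf u}(P)=cP$; and then reading off from the coefficient equations that $\mathbf u^{\mathbf i - \mathbf j}$ is a $K_{\mathbf g}$-multiple of a constant for two distinct support exponents $\mathbf i,\mathbf j$ — are exactly the intended ones, and the bound $\sum_k|i_k-j_k|\le 2\deg P\le 2\deg F$ is handled correctly.
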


 We now state a preliminary theorem in affine form.
 
\begin{theorem}\label{prethm}
Let $K$ be a subfield of the field of meromorphic functions.
Let $G$ be a nonconstant  polynomial in $K[x_1,\hdots,x_n]$ with no monomial  factors and no repeated factors.  For any   $\epsilon >0$, there exists a positive integer    $m$  such that
	for any $n$-tuple of meromorphic functions $\mathbf u=(u_1,\hdots,u_n)$ satisfying $ K\subset K_{\mathbf g}$, where $\mathbf g=[1:u_1:\cdots :u_n]$,  we  have  either  
\begin{align}\label{exception1}
		 T_{{u_1}^{m_1}\cdots  {u_n}^{m_n}}(r)={\rm o}\left(\max_{1\le j\le n}\{T_{u_j}(r)\}\right)
\end{align}
	for a non-trivial $n$-tuple $(m_1,\hdots,m_n)$ of integers with $\sum_{i=0}^n |m_i|\le  m$, or
 \begin{align}\label{multizerobddn}
 	N_{G(\mathbf{u})}(0,r)-N^{(1)}_{G(\mathbf{u})}(0,r)\le_{\rm exc} \epsilon \max_{1\le j\le n}\{T_{u_j}(r)\},
 \end{align}
if additionally one of the following holds:
\begin{enumerate}
\item[{\rm(a)}]
$N_{u_i} (0,r)+N_{u_i}( \infty,r)={\rm o}(\max_{1\le j\le n}\{T_{u_j}(r)\})$ for each $1\le i\le n$, or
\item[{\rm(b)}]$N_{u_i}^{(1)} (0,r)+N^{(1)} _{u_i}( \infty,r)={\rm o}(\max_{1\le j\le n}\{T_{u_j}(r)\})$ for each $1\le i\le n$,  and that  
$[G=0]$  and the $n+1$ coordinate hyperplanes are in weakly general position in $\mathbb P^n$.
\end{enumerate}
  \end{theorem}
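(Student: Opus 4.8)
The plan is to bound $N_{G(\mathbf u)}(0,r)-N^{(1)}_{G(\mathbf u)}(0,r)$ by a greatest-common-divisor counting function and then invoke the GCD theorem, Theorem~\ref{Mgcdunit}. Write $\mathbf g=[h_0:\cdots:h_n]$ with $h_0,\dots,h_n$ entire without common zeros, so $u_i=h_i/h_0$, and put $d:=\deg G$; since $K\subset K_{\mathbf g}$ the coefficients of $G$ lie in $K_{\mathbf g}$, and we may assume $G(\mathbf u)\not\equiv 0$. The starting point is the identity \eqref{fuvalue}, $G(\mathbf u)'=D_{\mathbf u}(G)(\mathbf u)$: a zero of $G(\mathbf u)$ of order $k\ge 1$ is a zero of $G(\mathbf u)'$ of order exactly $k-1$, so comparing local valuations pointwise yields the \emph{exact} identity
\[
 N_{G(\mathbf u)}(0,r)-N^{(1)}_{G(\mathbf u)}(0,r)=N_{\gcd}\bigl(G(\mathbf u),G(\mathbf u)',r\bigr).
\]
I would then homogenize: let $\widehat G$, $\widehat{D_{\mathbf u}(G)}$ be the homogenizations of $G$, $D_{\mathbf u}(G)$ with respect to a new variable $x_0$, and set $B:=x_0^{\,d-e}\widehat{D_{\mathbf u}(G)}$, a form of degree $d$, where $e:=\deg D_{\mathbf u}(G)\le d$. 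One checks $\widehat G(\mathbf g)=h_0^{\,d}G(\mathbf u)$ and $B(\mathbf g)=h_0^{\,d}G(\mathbf u)'$ as meromorphic functions; multiplying both arguments of a gcd by the entire function $h_0^{\,d}$ cannot decrease the gcd counting function pointwise, so
\[
 N_{G(\mathbf u)}(0,r)-N^{(1)}_{G(\mathbf u)}(0,r)\ \le\ N_{\gcd}\bigl(\widehat G(\mathbf g),B(\mathbf g),r\bigr).
\]

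Next I would prepare to apply Theorem~\ref{Mgcdunit} to the pair $\widehat G,B\in K_{\mathbf g}[x_0,\dots,x_n]$. For this $B$ must have coefficients in $K_{\mathbf g}$: they are $\ZZ$-combinations of $a_{\mathbf i}'$ and $a_{\mathbf i}\,u_j'/u_j$, and since both (a) and (b) give $N^{(1)}_{u_i}(0,r)+N^{(1)}_{u_i}(\infty,r)={\rm o}(\max_j T_{u_j}(r))$, the logarithmic-derivative estimate shows $u_j'/u_j\in K_{\mathbf g}$. Coprimality: if $D_{\mathbf u}(G)=0$ then $a_{\mathbf i}\mathbf u^{\mathbf i}$ is constant for each monomial $a_{\mathbf i}\mathbf x^{\mathbf i}$ of $G$, and since $G$ has no monomial factor it has at least two terms, so $\mathbf u^{\mathbf i-\mathbf j}\in K\subset K_{\mathbf g}$ for two distinct exponents — which is \eqref{exception1} with $\sum_i|m_i|\le 2d$; otherwise $B\ne 0$ is nonconstant, and by Lemma~\ref{coprimeD} either \eqref{exception1} holds with $\sum_i|m_i|\le 2d$, or $G$ and $D_{\mathbf u}(G)$ are coprime in $K_{\mathbf g}[\mathbf x]$, in which case, since $x_0\nmid\widehat G$, the forms $\widehat G$ and $B$ are coprime in $K_{\mathbf g}[x_0,\dots,x_n]$.

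It then remains to match the hypotheses. Every zero of $h_0$ is a pole of some $u_j$, and every zero of $h_i$ ($i\ge 1$) is a zero of $u_i$ or a zero of $h_0$; with \eqref{htequiv} this turns assumption (a) of Theorem~\ref{prethm} into hypothesis (a) of Theorem~\ref{Mgcdunit} (namely $N_{h_i}(0,r)={\rm o}(T_{\mathbf g}(r))$) and assumption (b) into its counting-function requirement $N^{(1)}_{h_i}(0,r)={\rm o}(T_{\mathbf g}(r))$; and the weakly-general-position condition in (b) on $[G=0]$ and the $n+1$ coordinate hyperplanes is exactly what Theorem~\ref{Mgcdunit}(b) asks of $\widehat G$. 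Hence, given $\epsilon>0$, apply Theorem~\ref{Mgcdunit} with $\epsilon/(2n)$ in place of its $\epsilon$ to get an integer $m_\star$, independent of $\mathbf g$, such that either $N_{\gcd}(\widehat G(\mathbf g),B(\mathbf g),r)\le_{\rm exc}\frac{\epsilon}{2n}T_{\mathbf g}(r)$ — which, with the last display and $T_{\mathbf g}(r)\le n\max_j T_{u_j}(r)$, gives \eqref{multizerobddn} — or $T_{u_1^{m_1}\cdots u_n^{m_n}}(r)={\rm o}(T_{\mathbf g}(r))$ for a nontrivial tuple with $\sum_i|m_i|\le 2m_\star$, which is \eqref{exception1}. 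Taking $m:=\max(2d,2m_\star)$ absorbs all exceptional tuples arising above; the case $n=1$ is identical with Proposition~\ref{gcdn1} replacing Theorem~\ref{Mgcdunit} (there the gcd term is automatically ${\rm o}(T_{\mathbf g}(r))$, so one always lands in \eqref{multizerobddn}).

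The genuinely substantive inputs are the clean valuation identity of the first step and the coprimality statement Lemma~\ref{coprimeD}; the rest is bookkeeping, and I expect the care to be needed in (i) making the single integer $m$ simultaneously bound the exceptional tuples coming from the degenerate case $D_{\mathbf u}(G)=0$, from Lemma~\ref{coprimeD}, and from Theorem~\ref{Mgcdunit}, and (ii) transporting the counting-function and weakly-general-position hypotheses correctly through the homogenization. The truncated hypothesis (b) (as opposed to (a)) is used precisely to secure $u_j'/u_j\in K_{\mathbf g}$ and to invoke part (b), rather than (a), of the GCD theorem — which is why weakly general position is imposed in (b).
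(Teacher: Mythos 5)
Your proof is correct and follows essentially the same route as the paper's: bound the excess $N_{G(\mathbf u)}(0,r)-N^{(1)}_{G(\mathbf u)}(0,r)$ by a gcd counting function via the valuation identity $G(\mathbf u)'=D_{\mathbf u}(G)(\mathbf u)$, invoke Lemma~\ref{coprimeD} for coprimality, and close with Theorem~\ref{Mgcdunit}. You are somewhat more explicit than the paper in a few spots the paper leaves implicit --- the homogenization $\widehat G$, $B=x_0^{d-e}\widehat{D_{\mathbf u}(G)}$ needed to match the hypotheses of Theorem~\ref{Mgcdunit} (which is stated for forms evaluated at an entire tuple $\mathbf g$, not an affine polynomial evaluated at the meromorphic tuple $\mathbf u$), the adjustment $\epsilon\mapsto\epsilon/(2n)$ to pass from $T_{\mathbf g}$ to $\max_j T_{u_j}$, and the separate treatment of $n=1$ via Proposition~\ref{gcdn1} (the paper's proof of Theorem~\ref{Mgcdunit} only handles $n\ge 2$, so this is worth flagging); the separate case $D_{\mathbf u}(G)=0$ is already absorbed into Lemma~\ref{coprimeD} and is harmless but redundant. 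One small wording quibble: $u_j'/u_j\in K_{\mathbf g}$ already follows from the truncated counting-function bounds common to both (a) and (b), so the distinction between (a) and (b) is used solely to select between the two hypotheses of Theorem~\ref{Mgcdunit}, not to secure the logarithmic derivatives.
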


\begin{proof}
Let $z_0\in\mathbb C$.  If $v_{z_0}( G(\mathbf{u}))\ge 2$, then it follows from \eqref{fuvalue} that $v_{z_0}( D_{\mathbf{u}}(G)(\mathbf{u}))= v_{z_0}( G(\mathbf{u}))-1$.
Hence,
$$
\min\{v_{z_0}^+(G(\mathbf{u})),v_{z_0}^+(D_{\mathbf{u}}(G)(\mathbf{u}))\}\ge v_{z_0}^+( G(\mathbf{u}))-\min\{1,v_{z_0}^+( G(\mathbf{u}))\}.
$$
Consequently,
\begin{align}\label{truncate}
N_{\gcd}(G(\mathbf{u}), D_{\mathbf{u}}(G)(\mathbf{u}),r)\ge N_{G(\mathbf{u})}(0,r)-N^{(1)}_{G(\mathbf{u})}(0,r).
\end{align}
By Lemma \ref{coprimeD}, $G$ and $D_{\mathbf{u}}(G)$ are either coprime or \eqref{exception1} holds for $m=2\deg G$.
Therefore, we assume that $G$ and $D_{\mathbf{u}}(G)$ are coprime.  By Theorem \ref{Mgcdunit}, we find a positive integer $m$ depending only on  $\epsilon$ such that either  \eqref{exception1} holds or 
\begin{align}\label{gcdUPB}
N_{\gcd}(G(\mathbf{u}), D_{\mathbf{u}}(G)(\mathbf{u}),r)\le_{\rm exc} \epsilon \max_{1\le j\le n}\{T_{u_j}(r)\}.
\end{align}
Together with \eqref{truncate}, we obtain \eqref{multizerobddn}.
\end{proof}

\subsection{Further refinement}
We will prove the following theorem by finding an exceptional set  in Theorem \ref{prethm}.
\begin{theorem}\label{keythm}
Let $K$ be a subfield of the field of meromorphic functions.
Let $G$ be a nonconstant  polynomial in $K[x_1,\hdots,x_n]$ with no monomial  factors and no repeated factors.  For any   $\epsilon >0$, there exists a nonconstant polynomial $H$ in $K[x_1,\hdots,x_n]$ such that for any $n$-tuple of meromorphic functions $\mathbf u=(u_1,\hdots,u_n)$ satisfying \begin{align}\label{uismall}
N_{u_i} (0,r)+N_{u_i}( \infty,r)={\rm o}\left(\max_{1\le j\le n}\{T_{u_j}(r)\}\right)\quad\text{for each $1\le i\le n$,}
\end{align} 
	and  $ K\subset K_{\mathbf g}$, where $\mathbf g=[1:u_1:\cdots :u_n]$,	we have either $H(\mathbf u)\equiv 0$ or
 \begin{align}\label{multizerobdd1}
 	N_{G(\mathbf{u})}(0,r)-N^{(1)}_{G(\mathbf{u})}(0,r)\le_{\rm exc} \epsilon \max_{1\le j\le n}\{T_{u_j}(r)\}.
 \end{align}
 Moreover, $H$ can be determined effectively and the degree of $H$ can be bounded effectively in terms of $\epsilon$, $n$ and the degree of $G$. 
  \end{theorem}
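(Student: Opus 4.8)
The plan is to prove Theorem~\ref{keythm} by induction on $n$, using Theorem~\ref{prethm} as the engine; the role of the induction is precisely to replace the ``growth'' alternative \eqref{exception1} of Theorem~\ref{prethm} by the algebraic alternative $H(\mathbf u)\equiv 0$. For $n=1$ the alternative \eqref{exception1} reads $T_{u_1^{m_1}}(r)={\rm o}(T_{u_1}(r))$ with $m_1\neq 0$, which is impossible since $T_{u_1^{m_1}}(r)=|m_1|\,T_{u_1}(r)+{\rm O}(1)$; thus Theorem~\ref{prethm}(a) gives \eqref{multizerobddn} directly and one takes $H=G$ (so that the case $G(\mathbf u)\equiv0$ is also covered). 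We may therefore assume $n\ge 2$ and that Theorem~\ref{keythm} holds in dimension $n-1$ over every subfield of $\mathcal M$.

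For the inductive step, fix $\epsilon>0$, let $m=m(\epsilon,n,\deg G)$ come from Theorem~\ref{prethm}, and assume \eqref{multizerobddn} fails, so $T_{\mathbf u^{\mathbf m}}(r)={\rm o}(\max_j T_{u_j}(r))$ for some nonzero $\mathbf m$ with $\sum_i|m_i|\le m$, which we may take primitive. By Lemma~\ref{main_lemma} complete $\mathbf m$ to a basis $\mathbf v_1,\dots,\mathbf v_{n-1},\mathbf m$ of $\mathbb Z^n$ with all entries bounded by $m$; let $A\in\GL_n(\mathbb Z)$ be the matrix with these rows and set $w_j=\mathbf u^{\mathbf v_j}$ ($1\le j\le n-1$), $w_n=\mathbf u^{\mathbf m}$, so that each $u_i$ is a Laurent monomial in $w_1,\dots,w_n$. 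Then $G(\mathbf u)=\mathbf w^{\mathbf e}\,\widetilde G(\mathbf w)$, where $\widetilde G\in K[w_1,\dots,w_n]$ is $G$ rewritten in the new multiplicative coordinates with its monomial factor removed; since that change of coordinates is an automorphism of the Laurent ring, $\widetilde G$ is nonconstant with no monomial factors and no repeated factors. Because $w_n$ is small, one has, assuming as we may that $\mathbf g$ is nonconstant, $N_{w_j}(0,r)+N_{w_j}(\infty,r)={\rm o}(\max_i T_{u_i}(r))$ for all $j$, $\max_i T_{u_i}(r)\asymp\max_{j<n}T_{w_j}(r)$, and the counting functions $N_{G(\mathbf u)}(0,r)$, $N^{(1)}_{G(\mathbf u)}(0,r)$ differ from $N_{\widetilde G(\mathbf w)}(0,r)$, $N^{(1)}_{\widetilde G(\mathbf w)}(0,r)$ by ${\rm o}(\max_i T_{u_i}(r))$; in particular $(w_1,\dots,w_{n-1})$ satisfies the analogue of \eqref{uismall} and $K(w_n)\subset K_{[1:w_1:\cdots:w_{n-1}]}$.

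Introduce a new variable $t$ in place of $w_n$ and put $\widehat G(t,w_1,\dots,w_{n-1}):=\widetilde G(w_1,\dots,w_{n-1},t)\in K[t,w_1,\dots,w_{n-1}]$, which has no monomial and no repeated factor; by Gauss's lemma the same holds over the field $K(t)$ in the variables $w_1,\dots,w_{n-1}$. If $\widehat G$ involves $t$ only, then $N_{G(\mathbf u)}(0,r)={\rm o}(\max_i T_{u_i}(r))$ and \eqref{multizerobdd1} is immediate; otherwise apply the inductive hypothesis (dimension $n-1$, field $K(t)$, parameter $\epsilon'=\epsilon/(2nm)$) to $\widehat G$, obtaining a nonconstant exceptional polynomial $\widehat H(t,w_1,\dots,w_{n-1})$, which by the explicit and effective nature of the construction may be taken in $K[t][w_1,\dots,w_{n-1}]$, with degree bounded in terms of $\epsilon$, $n$, $\deg G$. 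Invoking Lemma~\ref{special} (with $\mathbf t=(t)$, $\mathbf x=(w_1,\dots,w_{n-1})$, over $\mathcal M$, defined over $K$), and adjoining to it the finitely many polynomial conditions that guarantee compatibility of the construction of $\widehat H$ with the specialization $t\mapsto w_n$, we obtain an effectively computable finite set $\Sigma^{(\mathbf m)}\subset K[t]\setminus\{0\}$ such that, when $w_n\notin\mathcal Z(\Sigma^{(\mathbf m)})$, the polynomial $\widetilde G(\mathbf w)=\widehat G(w_n,w_1,\dots,w_{n-1})$ has no monomial or repeated factor over $K(w_n)$ and the exceptional polynomial produced by the inductive hypothesis for this specialized $(n-1)$-dimensional problem is exactly $\widehat H(w_n,w_1,\dots,w_{n-1})$. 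We distinguish two cases. If $\sigma(w_n)\equiv 0$ for all $\sigma\in\Sigma^{(\mathbf m)}$, fix a nonzero $\sigma_0\in\Sigma^{(\mathbf m)}$ and clear denominators in $\sigma_0(\mathbf u^{\mathbf m})\equiv 0$ to get a nonconstant $H^{(0)}_{\mathbf m}\in K[x_1,\dots,x_n]$ with $H^{(0)}_{\mathbf m}(\mathbf u)\equiv 0$. Otherwise $w_n\notin\mathcal Z(\Sigma^{(\mathbf m)})$, and the inductive hypothesis gives either $\widehat H(w_n,w_1,\dots,w_{n-1})\equiv 0$ or $N_{\widetilde G(\mathbf w)}(0,r)-N^{(1)}_{\widetilde G(\mathbf w)}(0,r)\le_{\rm exc}\epsilon'\max_{j<n}T_{w_j}(r)$. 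In the first case, clearing denominators in $\widehat H(\mathbf u^{\mathbf m},\mathbf u^{\mathbf v_1},\dots,\mathbf u^{\mathbf v_{n-1}})\equiv 0$ produces a nonconstant $H_{\mathbf m}\in K[x_1,\dots,x_n]$ with $H_{\mathbf m}(\mathbf u)\equiv 0$; in the second, combining with $\max_{j<n}T_{w_j}(r)\le nm\max_i T_{u_i}(r)$ and the comparison above yields \eqref{multizerobdd1} for the original $\epsilon$.

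Finally put $H:=G\cdot\prod_{\mathbf m}H_{\mathbf m}\cdot\prod_{\mathbf m}H^{(0)}_{\mathbf m}$, the products over the finitely many nonzero $\mathbf m$ with $\sum_i|m_i|\le m$ (any absent factor taken to be $1$); then $H\in K[x_1,\dots,x_n]$ is nonconstant, depends only on $\epsilon$, $n$, $G$, has degree bounded effectively in terms of $\epsilon$, $n$, $\deg G$ (by the inductive degree bound together with the bounds on $A$ and on $\Sigma^{(\mathbf m)}$), and the discussion above shows that for every admissible $\mathbf u$ either $H(\mathbf u)\equiv 0$ or \eqref{multizerobdd1} holds; the case of constant $\mathbf g$ is handled by the factor $G$. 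I expect the main obstacle to be precisely this bookkeeping: Theorem~\ref{prethm} only delivers the growth relation $T_{\mathbf u^{\mathbf m}}(r)={\rm o}(\cdots)$, and $w_n=\mathbf u^{\mathbf m}$ is a priori an arbitrary small meromorphic function, so producing a single $\mathbf g$-independent polynomial $H$ over $K$ forces one to run the inductive step over the abstract field $K(t)$, to know that the effective construction of the lower-dimensional exceptional polynomial commutes with the specialization $t\mapsto w_n$ away from a locus detected by Lemma~\ref{special} (this being the source of the extra factors $H^{(0)}_{\mathbf m}$), and to keep the degree bounds effective through an induction in which both the base field and the polynomial change at each step.
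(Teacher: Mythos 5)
Your plan reproduces the paper's essential machinery --- Lemma~\ref{main_lemma} to complete the degenerate exponent vector to a unimodular basis, a multiplicative change of coordinates that pushes the small monomial into a single ``slow'' variable, and Lemma~\ref{special} to protect squarefree-ness under specialization --- so the mechanism is the same. But packaging it as a clean induction on $n$, with Theorem~\ref{keythm} in dimension $n-1$ applied over the coefficient field $K(t)$, creates a genuine gap. Theorem~\ref{keythm} is a statement about subfields of $\mathcal M$, and $K(t)$ (for $t$ a free variable) is not one; what you actually need is the $(n-1)$-dimensional statement over $K(w_n)$ for every admissible $\mathbf u$, together with the claim that the exceptional polynomial thus produced is, uniformly in $\mathbf u$, the specialization $t\mapsto w_n$ of a single $\widehat H\in K[t][w_1,\dots,w_{n-1}]$. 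You flag this as the main obstacle, but it is not a consequence of Lemma~\ref{special} as stated: that lemma controls only the monomial/repeated-factor structure of a single polynomial under specialization, not the commutation of the entire recursive construction of the exceptional set with the map $t\mapsto w_n$. Also note that $w_n$ need not be transcendental over $K$, so $K[t]\to K(w_n)$ may have nontrivial kernel, which further complicates any naive ``specialize the proof'' argument.

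The paper avoids the issue by never invoking a lower-dimensional instance of Theorem~\ref{keythm}. Instead it iterates Theorem~\ref{prethm} --- which yields only the two numerical alternatives \eqref{multizerobddn}/\eqref{exception1} and does not itself output an exceptional polynomial --- through $n-1$ explicit steps, at step $s$ working over the honest subfield $K(\lambda_1,\dots,\lambda_s)\subset K_{\mathbf g}\subset\mathcal M$, and using Lemma~\ref{special} at each step solely to ensure that $G_s(\lambda_1,\dots,\lambda_s,X_{s,s+1},\dots,X_{s,n})$ has no monomial or repeated factor; the factors $H_1,\dots,H_{n-1}$ of $H$ come from these finitely many applications of Lemma~\ref{special}, summed over all bounded unimodular matrices. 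The terminal contradiction is that after $n-1$ reductions the degeneracy alternative of Theorem~\ref{prethm} collapses to $T_{\beta_{n-1,n}^m}(r)={\rm o}(T_{\beta_{n-1,n}}(r))$, forcing $\beta_{n-1,n}$ to be constant. To salvage your induction you would have to strengthen the inductive statement so that the dependence of $H$ on the coefficient field is manifestly polynomial and compatible with specialization, which in effect amounts to unwinding the induction into exactly the paper's iterative construction.
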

  \begin{proof} The proof of \cite[Theorem 4]{GNSW} can be adapted to suit the current situation. We will closely adhere to their arguments and notation. We first fix some notations: (i) For a matrix $A=(a_{ij})$ with complex-valued entries, let 
$$\Vert A\Vert_{\infty}=\max_{i}\sum_{j}\vert a_{ij}\vert$$
be the maximum of the absolute row sums;  (ii) We say that a non-trivial meromorphic function $\beta$ {\it has small zeros and poles w.r.t $\mathbf g$}  if 
$N_{\beta} (0,r)+N_{\beta}( \infty,r)= {\rm o}(T_{\mathbf g}(r))$.

Let $G\in K[x_1,\hdots,x_n]\setminus K$ with no monomial  factors and no repeated factors.  Let $\epsilon>0$.
In the following we consider a $n$-tuple of meromorphic functions $\mathbf u=(u_1,\hdots,u_n)$ satisfying
\begin{align*}
N_{u_i} (0,r)+N_{u_i}( \infty,r)={\rm o}\left(\max_{1\le j\le n}\{T_{u_j}(r)\}\right)={\rm o}\left(T_{\mathbf g}(r)\right)
\end{align*}
 for each $1\le i\le n$,
	and  $ K\subset K_{\mathbf g}$, where $\mathbf g=[1:u_1:\cdots :u_n]$.
We note that $\lambda\in K_{\mathbf g}$ if and only if $T_{\lambda}(r)={\rm o}\left(\max_{1\le j\le n}\{T_{u_j}(r)\}\right)$ 
by \eqref{htequiv}.

When $n=1$, the theorem is a direct consequence of Theorem \ref{prethm} since $u_1$ is constant if \eqref{exception1} holds.

From this point, we let $n\ge 2$.
We will effectively construct   a nonconstant polynomial $H$ in $K[x_1,\hdots,x_n]$ 
such that \eqref{multizerobdd1} holds if $H(u_1,\ldots,u_n)\not\equiv 0$.  

The arguments are carried out inductively in several steps. In the following, the $c_{i,j}$'s and $M_i$'s denote positive real numbers depending only on $\epsilon$, $n$, $\deg(G)$, and the previously defined $c_{i',j'}$ and $M_{i'}$. 

\textbf{Step 1}: We apply Theorem~\ref{prethm}. Note that the condition (a) in Theorem~\ref{prethm} holds under our assumption, so if (\ref{multizerobddn}) holds then we are done. Otherwise, 
 there exists an $n$-tuple of integers $(m_1,\ldots,m_n)\neq (0,\ldots,0)$ with $\sum\vert m_i\vert\leq M_1$ such that
\begin{equation}\label{eq:c_1,3 and c_1,4}
\lambda_1:=  u_1^{m_1}\cdots u_n^{m_n} \in  K_{\mathbf{g}}.
\end{equation}

We may assume $\gcd(m_1,\ldots,m_n)=1$. By Lemma \ref{main_lemma}, $(m_1,\hdots,m_n)$ extends to a basis 
$(m_1,\hdots,m_n)$,  $(a_{21},\hdots,a_{2n}),\hdots, (a_{n1},\hdots,a_{nn})$
of  $\mathbb Z^n$ such that 
\begin{align}\label{basisbound_step1}
|a_{i1}|+\cdots+|a_{in}|\le M_1+n\quad\text{ for } 2\le i\le n.
\end{align} 

Consider the change of variables
\begin{align}\label{transform_var_step1}
\Lambda_1:=x_1^{m_1}\cdots x_n^{m_n}, \quad\text{and } \quad
X_{1,i}:=x_1^{a_{i1}}\cdots x_n^{a_{in}}  \quad\text{for } 2\le i\le n
\end{align}
and put
\begin{align}\label{transform_unit_step1}
\beta_{1,i}=u_1^{a_{i1}}\cdots u_n^{a_{in}}  \quad\text{for } 2\le i\le n.
\end{align}

Let $A_1$ denote the $n\times n$ matrix whose rows are the above basis of $\mathbb{Z}^n$. Then we formally express the above identities as
\begin{equation}\label{eq:formal_A1}
(\Lambda_1,X_{1,2},\ldots,X_{1,n})=(x_1,\ldots,x_n)^{A_1}\quad \text{and} \quad (\lambda_1,\beta_{1,2},\ldots,\beta_{1,n})=(u_1,\ldots,u_n)^{A_1}.
\end{equation}
Let $B_1=A_1^{-1}$. The entries of $B_1$ can be bounded from above in terms of $M_1$ and $n$. We have
\begin{equation}\label{eq:formal_B1}
(x_1,\ldots,x_n)=(\Lambda_1,X_{1,2},\ldots,X_{1,n})^{B_1}\quad \text{and} \quad (u_1,\ldots,u_n)=(\lambda_1,\beta_{1,2},\ldots,\beta_{1,n})^{B_1}.
\end{equation}

Let $G_1(\Lambda_1,X_{1,2},\ldots,X_{1,n})\in K[\Lambda_1,X_{1,2},\ldots,X_{1,n}]$ with no monomial factors
and 
\begin{equation}\label{eq:G_1}
G((\Lambda_1,X_{1,2},\ldots,X_{1,n})^{B_1})=\Lambda_1^{d_1}X_{1,2}^{d_2}\cdots X_{1,n}^{d_n} G_1(\Lambda_1,X_{1,2},\ldots,X_{1,n})
\end{equation}
for some integers $d_i$, $1\leq i\leq n$. Since the transformations in \eqref{eq:formal_A1} and \eqref{eq:formal_B1} are invertible of each other and $G$ has no repeated irreducible factors, we have that $G_1$ has no repeated irreducible factors either. The coefficients of $G_1$ are the same as the coefficients of $G$ and $\deg(G_1)$ can be bounded from above explicitly in terms of $M_1$, $n$, and $\deg(G)$. Consider $G_1(\lambda_1,X_{1,2},\ldots,X_{1,n})\in K(\lambda_1)[X_{1,2},\ldots,X_{1,n}]$, by using \eqref{eq:c_1,3 and c_1,4} we have
\begin{equation}\label{eq:tilde h after specialization 1}
K(\lambda_1)\subset K_{\mathbf{g}}.
\end{equation}

For the particular change of variables in \eqref{eq:formal_A1}, \eqref{eq:formal_B1}, and \eqref{eq:G_1} (that depends on the matrix $A_1$),
we apply the Lemma \ref{special}  with $k$ being the field of meromorphic functions $\mathcal M$ and $k_0=K$ and \eqref{transform_var_step1} to find a nonconstant polynomial $H_1'\in K[x_1,\hdots,x_n]$ 
such that   $G_1(\lambda_1,X_{1,2},\ldots,X_{1,n})$
has neither monomial nor repeated irreducible factors  if $H_1'(u_1,\ldots,u_n)\not\equiv  0$. We now take $H_1$ to be the product of all
such $H_1'$ where $A_1$ ranges over the finitely many elements of $\GL_n(\mathbb{Z})$ with $\Vert A_1\Vert_{\infty}\leq M_1+n$. From Lemma \ref{special}, we know that $\deg H_1$ depends only on $\epsilon$, $n$ and $\deg G$.

Since the $u_i$'s, $\lambda_1$, and $\beta_{1,j}$'s have small zero and pole with respect to $\mathbf{g}$, we have:
\begin{align}\label{eq:N_S-N_S^1 step1}
\begin{split}
  N_{G(\mathbf u)}(0,r)-N_{G(\mathbf u)}^{(1)}(0,r) 
=  N_{G_1(\lambda_1,\beta_{1,2},\ldots,\beta_{1,n})}(0,r)-N_{G_1(\lambda_1,\beta_{1,2},\ldots,\beta_{1,n})}^{(1)}(0,r)+{\rm o}( T_{\mathbf{g}}(r)) 
\end{split}
\end{align}
by  \eqref{eq:formal_A1}  and \eqref{eq:G_1}.
From  \eqref{eq:formal_A1}, \eqref{eq:formal_B1} and \eqref{eq:c_1,3 and c_1,4}, we have:
\begin{equation}\label{eq:c_1,78910}
 \max_{1\leq i\leq n}\{T_{u_i}(r)\} ={\rm O}\left(\max\{T_{\lambda_1}(r), T_{\beta_{1,2}}(r)),  \ldots,T_{\beta_{1,n}}(r) \}\right)={\rm O}\left(\max_{2\leq i\leq n}\{T_{\beta_{1,i}}(r)\}\right). 
\end{equation} 
In conclusion,  at the end of this step we have 
\begin{align}\label{eq:end step1 1}
	\max_{2\leq i\leq n}\{T_{\beta_{1,i}}(r)\}={\rm O}\left( \max_{1\leq i\leq n}\{T_{u_i}(r)\}\right).
\end{align}
Furthermore, it remains to consider the case when
\begin{align}\label{eq:end step1 2}
 N_{G_1(\lambda_1,\beta_{1,2},\ldots,\beta_{1,n})}(0,r)-N_{G_1(\lambda_1,\beta_{1,2},\ldots,\beta_{1,n})}^{(1)}(0,r)&<_{\rm exc}\epsilon   \max_{1\leq i\leq n}\{T_{u_i}(r)\}
\end{align}
fails to hold under the assumption that $H_1(u_1,\ldots,u_n)\not\equiv 0$.

There are $n-1$ many steps in total. Hence if $n\geq 3$, we proceed with the following $n-2$ many more steps.

\textbf{Step 2}: We include this step in order to illustrate the transition from Step $s-1$ to Step $s$
below. Since the various estimates and constructions are similar to those in Step 1, we skip some of the details. Suppose $H_1(u_1,\ldots,u_n)\not\equiv 0$ so that $G_1(\lambda_1,X_{1,2},\ldots,X_{1,n})$ has
 neither monomial nor repeated factors. 
 
We now apply
 Theorem~\ref{prethm}, assuming (\ref{eq:end step1 2}) fails to hold, 
 for $G_1(\lambda_1,X_{1,2},\ldots,X_{1,n})$ and $(\beta_{1,2},\ldots,\beta_{1,n})$
 and use \eqref{eq:tilde h after specialization 1}, \eqref{eq:end step1 1},  to get an $(n-1)$-tuple
 $(m_2',\ldots,m_n')\neq (0,\ldots,0)$ with $\sum \vert m_i'\vert\leq M_2$ such that
  \begin{equation}\label{eq:c_2,3 and c_2,4}
\lambda_2:=\beta_{1,2}^{m_2'}\cdots \beta_{1,n}^{m_n'}\in K_{\mathbf{g}}.
\end{equation}
 We may assume $\gcd(m_2',\ldots,m_n')=1$. By Lemma \ref{main_lemma}, $(m_2',\hdots,m_n')$ extends to a basis of $\mathbb{Z}^{n-1}$ in which each vector has $\ell_1$-norm at most $M_2+n$.

Let $A_2'$ be the $(n-1)\times (n-1)$ matrix whose rows are the above basis of $\mathbb{Z}^{n-1}$. We make
the transformation:
$$(\Lambda_2,X_{2,3},\ldots,X_{2,n})=(X_{1,2},\ldots,X_{1,n})^{A_2'} \quad \text{and} \quad
(\lambda_2,\beta_{2,3},\ldots,\beta_{2,n})=(\beta_{1,2},\ldots,\beta_{1,n})^{A_2'}.$$
Let $A_2=(1)\oplus A_2'$ be the $n\times n$ block diagonal matrix with the $(1,1)$-entry $1$ and the matrix $A_2'$ in the remaining $(n-1)\times (n-1)$ block. We have
$$(\Lambda_1,\Lambda_2,\ldots,X_{2,n})=(\Lambda_1,X_{1,2},\ldots,X_{1,n})^{A_2} \quad \text{and} \quad
(\lambda_1,\lambda_2,\ldots,\beta_{2,n})=(\lambda_1,\beta_{1,2},\ldots,\beta_{1,n})^{A_2}.$$
Combining this with \eqref{eq:formal_A1}, we have:
\begin{align}\label{eq:formal_A2}
\begin{split}
(\Lambda_1,\Lambda_2,X_{2,3},\ldots,X_{2,n})=(x_1,\ldots,x_n)^{A_2A_1}\quad \text{and}\\
(\lambda_1,\lambda_2,\beta_{2,3},\ldots,\beta_{2,n})=(u_1,\ldots,u_n)^{A_2A_1}.
\end{split}
\end{align}
Let $B_2=(A_2A_1)^{-1}$. Let $G_2(\Lambda_1,\Lambda_2,X_{2,3},\ldots,X_{2,n})$ be the polynomial with no monomial factors such that
$$G_0((\Lambda_1,\Lambda_2,X_{2,3},\ldots,X_{2,n})^{B_2})=\Lambda_1^{d_1'}\Lambda_2^{d_2'}X_{2,3}^{d_3'}\cdots X_{2,n}^{d_n'}G_2(\Lambda_1,\Lambda_2,X_{2,3},\ldots,X_{2,n})$$
for some $d_1',\ldots,d_n'\in\mathbb{Z}$. We have that $\deg(G_2)$ can be bounded from above explicitly in terms of $M_2$, $M_1$, $n$, and $\deg(G)$. As before, we regard $G_2(\lambda_1,\lambda_2,X_{2,3},\ldots,X_{2,n})$ as a polynomial in $X_{2,3},\ldots,X_{2,n}$  with coefficients in $K_{\mathbf{g}}$ using \eqref{eq:c_1,3 and c_1,4} and \eqref{eq:c_2,3 and c_2,4}.

 For a particular $A_1$ and $A_2$, we apply Lemma~\ref{special}  with $k=\mathcal M$ and $k_0=K$ and use \eqref{eq:c_1,3 and c_1,4} and \eqref{eq:c_2,3 and c_2,4} to get  a nonconstant polynomial $H_2'$ in $K[x_1,\hdots,x_n]$ such that $G_2(\lambda_1,\lambda_2,X_{2,3},\ldots,X_{2,n})$ has neither monomial nor repeated factors. 
We now take $H_2$ to be the product of all such $H_2'$ where $A_1$ and $A_2$
 range over the finitely many unimodular matrices with
 $\Vert A_1\Vert_{\infty}\leq M_1+n$ and $\Vert A_2\Vert_{\infty}\leq M_2+n$.   
 By using similar estimates, at the end of this step, we have
 \begin{align}\label{eq:end step2 1}
\max_{3\leq i\leq n}\{T_{\beta_{1,i}}(r)\}={\rm O}\left( \max_{1\leq i\leq n}\{T_{u_i}(r)\} \right)
\end{align}
and
\begin{align}\label{eq:end step2 3}
 N_{G_2(\lambda_1,\lambda_2,\beta_{2,3},\ldots,\beta_{2,n})}(0,r)-N_{G_2(\lambda_1,\lambda_2,\beta_{2,3},\ldots,\beta_{2,n})}^{(1)}(0,r))<_{exc}\epsilon  \max_{1\leq i\leq n}\{T_{u_i}(r)\}
\end{align}
fails to hold.

Let $2\leq s\leq n-1$ and suppose that we have completed Step $s-1$. This includes the construction
of $H_{s-1}\in K[x_1,\hdots,x_n]$ with degree depends on $\epsilon$, $n$ and $\deg G$ only. We then complete Step $s$ in the same manner Step $2$ is carried out after Step 1. The last one is Step $n-1$ resulting in $H_{n-1}\in K[x_1,\hdots,x_n]$. We now define
$H=H_1 \cdots H_{n-1}$.  Then $\deg H$  depends only on $\epsilon$, $n$ and $\deg G$ since each $H_i$ does so.
Suppose $H(u_1,\ldots,u_n)\not\equiv 0$.   
Assume we go through all the above $(n-1)$ steps
to get the polynomial
$$P(X_{n-1,n}):=G_{n-1}(\lambda_1,\ldots,\lambda_{n-1},X_{n-1,n})\in K_{\mathbf g}[X_{n-1,n}]$$ 
such that its degree can be bounded explicitly in terms of $M_{n-1},\ldots,M_1$, $n$, and $\deg(G)$. At the end of Step $n-1$, we have that $\beta_{n-1,n}$  has small zero and pole with respect to $\mathbf{g}$, so it satisfes
 \begin{align}\label{eq:end step n-1 1}
 T_{\beta_{n-1,n}}(r) ={\rm O}\left( \max_{1\leq i\leq n}\{T_{u_i}(r)\}\right). 
\end{align}
If 
\begin{align}\label{eq:end step n-1 2}
N_{P(\beta_{n-1,n})}(0,r)-N^{(1)}_{P(\beta_{n-1,n})}(0,r)
<_{exc} \epsilon   \max_{1\leq i\leq n}\{T_{u_i}(r)\} \end{align}
holds, then we are done. Otherwise, 
since $H_{n-1}(u_1,\ldots,u_n)\not\equiv 0$, the polynomial $P(X_{n-1,n})$
has neither monomial nor repeated irreducible factors,  according to Theorem~\ref{prethm},   there exists a non-zero integer $m$ such that, by using (\ref{eq:end step n-1 1}), 
\begin{align}\label{eq:cn3 and cn4}
	T_{\beta_{n-1,n} ^m}(r) ={\rm o}( T_{\beta_{n-1,n}}(r)), 
\end{align}
which is not possible since $\beta_{n-1,n}$ is not constant.  This completes the proof.
  \end{proof} 
  
\subsection{Proof of Theorem~\ref{main_thm_1GExc}} 
  
\begin{proof}[Proof of Theorem~\ref{main_thm_1GExc}]
  
  Let $F\in K[x_0, \ldots, x_n]$. 
Consider a holomorphic curve $\mathbf{g}=(g_0,\hdots,g_n)$, where $g_0, \dots, g_n$ are entire functions with no common zeros,  such that $K\subset K_{\mathbf{g}}$ and $N_{\mathbf{g}}(H_i,r)={\rm o}( T_{\mathbf{g}}(r))$ for $0\le i\le n$.
Let $u_i=\frac{g_i}{g_0}$ for $0\le i\le n$, $\mathbf{u}=(u_1,\ldots,u_n)$, and $G:=F(1,x_1,\hdots,x_n)\in K[x_1,\hdots,x_n]$.
Then 
\begin{align}\label{zeroquiv1}
N_{u_i} (0,r)+N_{u_i}( \infty,r)\le N_{g_i}(0,r)+N_{g_0}(0,r)=N_{\mathbf{g}}(H_i,r)+N_{\mathbf{g}}(H_0,r)={\rm o}( T_{\mathbf{g}}(r))
\end{align}
for each $1\le i\le n$, and, by \eqref{htequiv}, 
\begin{equation}\label{CharFun}
 \max_{1\leq i\leq n}\{T_{u_i}(r)\} ={\rm O}( T_{\mathbf{g}}(r)).
 \end{equation} 
 Since $F(\mathbf{g})=F(g_0,\hdots,g_n)=g_0^dG(\mathbf{u})$, we have 
 \begin{equation}\label{affineCountingFun}
N_{F(\mathbf{g})}(0,r)=N_{G(\mathbf{u})}(0,r)+{\rm o}( T_{\mathbf{g}}(r))
 \quad \text{and} \quad 
N^{(1)}_{F(\mathbf{g})}(0,r)=N^{(1)}_{G(\mathbf{u})}(0,r)+{\rm o}( T_{\mathbf{g}}(r)).
 \end{equation} 
Consequently, we may apply Theorem \ref{keythm} for any given positive real $\epsilon$ to find a non-trivial polynomial $Q\in K[x_1, \ldots, x_n]$ such that \eqref{multizerobdd} holds, that is 
 \begin{equation}\label{multizerobddF}  
N_{F(\mathbf{g})}(0,r)- N^{(1)}_{F(\mathbf{g})}(0,r)\le \epsilon T_{\mathbf{g}}(r), 
 \end{equation} 
 when $Q(\mathbf{u})\not\equiv 0$.
In addition,  the polynomial  $Q$ can be determined effectively and the degree of $Q$ can be bounded effectively in terms of $\epsilon$, $n$ and the degree of $F$.  At this step, we take $Z$ to be the zero locus of the homogeneous polynomial $x_0^{\deg Q}\cdot Q(\frac{x_1}{x_0},\hdots,\frac{x_n}{x_0})\in K[x_0,\hdots,x_n]$.
 
Let $F=\sum_{\mathbf i\in I_F}\alpha_{\mathbf i}{\mathbf x}^{\mathbf i}\in K[ x_0,\hdots, x_n]$,   and let $W$ be the Zariski closed subset that is the union of hypersurfaces of $\mathbb P^n$ of the form  $\sum_{\mathbf i\in J}\alpha_{\mathbf i}{\mathbf x}^{\mathbf i}=0$, where $J$ is a non-empty subset of $I_F$.
The Zariski closed set $Z\cup W$ satisfies {\rm(Z1)} and {\rm(Z2)} since both $Z$ and $W$ do so.
We now prove \eqref{truncate1_2} holds (after possibly enlarging $Z$) by further assuming that the hypersurface $[F=0]$ and the coordinate hyperplanes  in $\mathbb P^n$ are in weakly general position.
 Therefore, may write  
\begin{align}\label{expressF}
F(\mathbf g) = \sum_{0\le i\le n}\alpha_{\mathbf i_i}g_i^d+\sum_{\mathbf i\in I_G\setminus I }\alpha_{\mathbf i}{\mathbf g}^{\mathbf i} 
\end{align}
where $\alpha_{\mathbf i_i}\ne 0$ for $0\le i\le n$ and $I=\{ \mathbf i_0:=(d,0,\hdots,0),\hdots, \mathbf i_n:=(0,\hdots,0,d)\}$.

  For  $ {\mathbf g}$ with ${\mathbf g}({\Bbb C})$ is not contained in $Z\cup W$, we may use Theorem~\ref{trunborel} to show that  
\begin{equation*}
		d T_{\mathbf g}(r)\le  N_{F(\mathbf g)}(0,r)+{\rm o}( T_{\mathbf{g}}(r)) 
	\end{equation*}  
 since $\alpha_{\mathbf i_i}\in K_{\mathbf g}$  and $N_{g_i}(0,r)={\rm o}( T_{\mathbf{g}}(r))$ for $0\le i\le n$.
Together with \eqref{multizerobddF}, we arrive at $ N_{F( {\mathbf g})}^{(1)}(0,r)\ge (d-\epsilon)T_{\mathbf{g}}(r)$. By letting $Z\cup W$ be the desired exceptional set $Z$, we finish the proof.
 \end{proof}

\section{Proof of Theorem  \ref{GG_conj}}
We will adapt the proof strategy employed in \cite[Theorem 1.2]{GSW22} to suit the current situation and subsequently apply Theorem \ref{main_thm_1GExc}.
\begin{proof}
Let $z_{0}\in\mathbb{C}$   such that  all the coefficients of  all $F_i$, $1\le i\le n+1$ are  holomorphic at $z_0$ and the zero locus of  $F_i$,   $1\le i\le n+1$, evaluating at $z_0$, denoted by $D_i(z_0)$,  intersect transversally.  We note these conditions imply that $z_0$ is not a common zero of  the coefficients of $F_i$, for each $1\le i\le n+1$.

 Since the zero locus of $F_{i}(z_{0})$, $1\le i\le n+1$, intersect transversally,
they are in general position; thus the set of polynomials  $F_{i}$,
$1\le i\le n+1$, is in  weakly general position. 
Then Proposition \ref{HilbertN} implies that the
only $(x_{0},\ldots,x_{n})\in\mathcal{M}^{n+1}$ with
$F_{i}(x_{0},\ldots,x_{n})\equiv 0$ for each $1\le i\le n+1$ is $(0,\ldots,0)$.
Thus the association \textcolor{black}{${\mathbf x}\mapsto[F_{1}^{a_{1}}({\mathbf x}):\hdots:F_{n+1}^{a_{n+1}}({\mathbf x})]$,
where $a_{i}\coloneqq{\rm lcm}(\deg F_{1},\hdots,\deg F_{n+1})/\deg F_{i}$,
defines a morphism} $\pi:\PP^{n}(\mathcal M)\to\PP^{n}(\mathcal M)$
over $K$.  Let 
\begin{align*}
G:=\det \left(\frac{\partial F_i }{\partial x_j}\right)_{1\le i\le n+1, 0\le j\le n}\in K[x_{0},\hdots,x_{n}].
 \end{align*} 
Denote by 
$\pi|_{z_{0}}=[F_{1}^{a_{1}}(z_{0}) :\dots:F_{n+1}^{a_{n+1}}(z_{0}) ]:\PP^{n}(\mathbb{C})\rightarrow\PP^{n}(\mathbb{C})$, which is a morphism since $F_1(z_0),\hdots,F_{n+1}(z_0)$ are in general position.
As proved in \cite[Theorem 1.2]{GSW22}, we have that $[G(z_0)=0]$ (the zero locus of $G(z_0)$), $D_1(z_0),\hdots,D_{n+1}(z_0)$ are  is in general position (in $\mathbb P^n(\CC)$).  Hence, there is a nonconstant irreducible factor $\tilde G$ of $G$  in $K[x_{0},\hdots,x_{n}]$ such that $\tilde G$,  $F_{1},\ldots,F_{n+1}$
is in weakly general position.   Denote by $Y$ the zero locus of $\tilde G$ in $ \PP^{n}(\overline K) $.  We note that  $Y$ is contained in the ramification divisor of $\pi$ since $G$ is a factor of the determinant of the Jacobian matrix  associated with the map $\pi$.  
Then there exists an irreducible homogeneous  polynomial $A\in K[y_0,\hdots,y_n]$ such that the vanishing order of $\pi ^*A$ along $Y$ is at least 2.  Then this construction gives
$\pi^*\circ  A=\tilde G^2H$ for some $H\in  K[x_{0},\hdots,x_{n}]$. 
Since  the divisors defined by $\tilde G(z_0)$,  $F_{1}(z_0),\ldots,F_{n+1}(z_0)$ are in general position, their images are also in general position.  Therefore,  $ A $ and $y_i$, $0\le i\le n$ are in weakly general position. 

Now let $\mathbf{f}=(f_{0},\hdots,f_{n}):\CC\to\PP^{n}$
be a holomorphic map, where $f_{0},\hdots,f_{n}$ are entire functions
without common zeros.  Assume that $K\subset K_{\mathbf{f}}$. 
Let  $\mathbf{g}\coloneqq\pi(\mathbf{f})=(F_{1}(\mathbf{f})^{a_{1}},\ldots,F_{n+1}(\mathbf{f})^{a_{n+1}})$
is a tuple of entire functions without zeros.  Then
\begin{align}\label{height}
	 T_{\mathbf{g}}(r)=d_1 T_{\mathbf{f}}(r)+{\rm o}(  T_{\mathbf{f}}(r)),
\end{align}
where $d_1=\deg F_1\cdot a_1$.
From the equality $ A(\mathbf{g})=(\pi^*\circ  A)(\mathbf{f})=\tilde G^2(\mathbf{f})H(\mathbf{f})$, it follows that for each $z\in\mathbb{C}$ with $v_{z}(\tilde G(\mathbf{f}))>0$,  
we have 
\begin{align}\label{poleH}
\max\{0,v_{z}(  A(\mathbf{g}))\}\ge 2v_{z}(\tilde G(\mathbf{f}))+\min\{0,v_{z}( H(\mathbf{f}))\}\ge v_{z}(\tilde G(\mathbf{f}))+1+\min\{0,v_{z}( H(\mathbf{f}))\}.
\end{align}
Since  $f_{0},\hdots,f_{n}$ are entire functions, the nonnegative number $-\min\{0,v_{z}( H(\mathbf{f}))\}$ is bounded by the number of poles of the coefficients of $H$ at $z$.   Since the coefficients of $H$ are in $K$ and that $N_{\beta}(\infty,r)\le T_{\beta}(r)+{\rm O}(1)={\rm o}(  T_{\mathbf{f}}(r))$ for any $\beta\in K$, it follows from \eqref{poleH}  that  
\begin{align}\label{countingG}
N_{\tilde G(\mathbf{f})}(0,r)\le N_{A(\mathbf{g})}(0,r)-N_{A(\mathbf{g})}^{(1)}(0,r)+{\rm o}(  T_{\mathbf{f}}(r)).
\end{align}
Assume furthermore that $N_{F_i(\mathbf{f})}(0,r)={\rm o}( T_{\mathbf{f}}(r))$ for $1\le i\le n+1$.  Then  $N_{\mathbf{g}}(H_i,r)={\rm o}( T_{\mathbf{f}}(r))$($={\rm o}( T_{\mathbf{g}}(r))$ by \eqref{height} ) for coordinate hyperplanes $H_i$, $0\le i\le n$, of $\mathbb P^n$.   We now apply Theorem \ref{main_thm_1GExc}  for  $ \epsilon =\frac{1}{4 d_1 }$.  Then we can  find a homogeneous polynomial $B_0\in  K[y_0,\ldots,y_n]$ such that for any nonconstant holomorphic map $\mathbf{f}=(f_0,\hdots,f_n):\CC\to \PP^n$  such that $K\subset K_{\mathbf{f}}$ and $N_{F_i(\mathbf{f})}(0,r)={\rm o}( T_{\mathbf{f}}(r))$ for $1\le i\le n+1$, with $B_0(\mathbf g)=B_0(\pi(\mathbf{f}))$ not identically zero, we have 
\begin{align}\label{multizerobdd}
	 N_{A(\mathbf{g})}(0,r)-N_{A(\mathbf{g})}^{(1)}(0,r)\le_{\rm exc} \epsilon T_{\mathbf{g}}(r),
\end{align}
and
\begin{align}\label{truncate1_2}
N_{A(\mathbf{g})}^{(1)}(0,r)\ge_{\rm exc}  (\deg  A- \epsilon)\cdot T_{\mathbf g}(r).
\end{align}
Combining  \eqref{countingG} and \eqref{multizerobdd}, we have 
\begin{align}\label{countingG2}
N_{\tilde G(\mathbf{f})}(0,r)  \le_{\rm exc} \epsilon  T_{\mathbf{g}}(r).
\end{align}
Since $[\tilde G=0]\le \pi^* ([ A=0])$ as divisors,
we can derive from the functorial property of Weil functions the following:
\begin{align}\label{prox}
m_{\mathbf f}([\tilde G=0],r) \le m_{ \mathbf{g} }([A=0],r)=\deg A \cdot T_{\mathbf{g}}(r)-N_{A(\mathbf{g})}(0,r)+{\rm o}( T_{\mathbf{g}}(r)).
\end{align}
Then by 	\eqref{truncate1_2},we have
\begin{align}\label{proxiG}
	  m_{\mathbf f}([\tilde G=0],r)\le \epsilon T_{\mathbf{g}}(r).
	\end{align}
Combining \eqref{countingG2}, \eqref{proxiG}  and \eqref{height}, we have 
\begin{align}\label{FG}
		T_{[\tilde G=0], \mathbf{f}}(r) \le_{\exc} 2\epsilon T_{\mathbf{g}}(r)=2\epsilon\cdot  d_1 T_{\mathbf{f}}(r)+{\rm o}( T_{\mathbf{f}}(r)), 
	\end{align}
On the other hand, the first main theorem implies that 
\begin{align}\label{FG}
		\deg \tilde G\cdot T_{\mathbf{f}}(r)=T_{[\tilde G=0], \mathbf{f}}(r)+{\rm o}( T_{\mathbf{f}}(r)).	
		\end{align}
Therefore, we have
\begin{align}\label{FG}
		T_{\mathbf{f}}(r) \le_{\exc}  2\epsilon\cdot  d_1 T_{\mathbf{f}}(r)+{\rm o}( T_{\mathbf{f}}(r)), 
	\end{align}
which is not possible since   $\epsilon=\frac{1}{4 d_1 }$.
This shows that  $B_0(\mathbf g)$ is identically zero.  Let $B:=\pi^*(B_0)=B_0(F_{1}^{a_{1}},\hdots,F_{n+1}^{a_{n+1}})\in K[x_0,\hdots,x_n]$, which is not identically zero since $\pi$ is a finite morphism.   
Then $B(\mathbf f)$ is identically zero as asserted.
  \end{proof}
 
\section{Proof of Theorem \ref{defect}}
The defect relation stated in Theorem \ref{defect} directly follows from Theorem \ref{GG_conj} by noticing that
$\sum_{i=1}^{n+1}\delta_{\mathbf{f}} (D_i)=n+1$ if and only if $N_{\mathbf{f}} (D_i, r)={\rm o}( T_{\mathbf{f}}(r))$ for each $i$.
To establish the truncated defect relation for $n=2$,  we relax the assumption to $N_{\mathbf{g}}^{(1)}(H_i,r)={\rm o}( T_{\mathbf{g}}(r))$ for $0\le i\le 2$. We state  a modified version of Theorem \ref{main_thm_1GExc} below
to demonstrate that Theorem \ref{GG_conj} remains valid under these conditions.

\begin{theorem}\label{main_thm_n2}
Let $K$ be a subfield of the field $\mathcal M$ of meromorphic functions.
Let $G$ be a non-constant  homogeneous polynomial   in $K[x_0,x_1,x_2]$ with no monomial factors and no repeated factors.  Let  $H_i=[x_{i-1}=0]$, $1\le i\le 3$,  be the  coordinate hyperplane divisors of   $ \mathbb P^2$.  Assume that the plane curve  $[G=0]$  and $H_i$, $1\le i\le 3$, are in weakly general position.  
Then for any   $\epsilon >0$, there exists a proper Zariski closed subset $Z$ of $\mathbb P^2$ defined over $K$ such that for any nonconstant holomorphic curve   $\mathbf{g}=(g_0,g_1,g_2):\CC\to\PP^2(\CC)$ such that   $N_{\mathbf{g}}^{(1)}(H_i,r)={\rm o}( T_{\mathbf{g}}(r))$ for $0\le i\le 2$ with $\mathbf g$ not contained in $Z$, we have   
\begin{align}\label{multizerobddn2}
	N_{G(\mathbf{g})}(0,r)-N^{(1)}_{G(\mathbf{g})}(0,r)\le_{\rm exc} \epsilon T_{\mathbf{g}}(r),
\end{align}
and
\begin{align}\label{truncate1_n2}
N^{(1)}_{G(\mathbf g)}(0,r)\ge_{\rm exc}  (\deg  G- \epsilon)\cdot T_{\mathbf g}(r).
\end{align}
Furthermore, the exceptional set $Z$ is a finite union of closed subsets given by homogenization equations  of the form $ x_1^{n_1}x_2^{n_2}=\lambda $, where     $\lambda\in K^*$ and  $(n_1,n_2)$ is a pair of integers with $\max\{|n_1|,|n_2|\} $ bounded from above by an effectively computable integer $m$.
\end{theorem}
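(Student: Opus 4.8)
My plan is to run the proof of Theorem~\ref{main_thm_1GExc} for $n=2$, with the two modifications forced by the weaker hypothesis $N^{(1)}_{\mathbf g}(H_i,r)={\rm o}(T_{\mathbf g}(r))$. Put $u_1=g_1/g_0$, $u_2=g_2/g_0$ and $\tilde G(x_1,x_2):=G(1,x_1,x_2)$; then $N^{(1)}_{u_i}(0,r)+N^{(1)}_{u_i}(\infty,r)={\rm o}(T_{\mathbf g}(r))$ for $i=1,2$, and the weakly general position of $[G=0]$ with the coordinate hyperplanes becomes the statement that $\tilde G$ has no monomial factor and $[\tilde G=0]$ is in weakly general position with the coordinate hyperplanes in the affine sense. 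The first task is an analogue of Theorem~\ref{keythm} valid under this hypothesis, and the key point is that Lemma~\ref{coprimeD} already uses only the $1$-truncated control on the $u_i$, while the GCD theorem, Theorem~\ref{Mgcdunit}, is available here through its hypothesis~(b) precisely because we have now imposed the weakly general position; hence the preliminary Theorem~\ref{prethm} goes through with (b) in place of (a).

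For $n=2$ the inductive construction of Theorem~\ref{keythm} collapses to a single step. One application of Theorem~\ref{prethm} (under (b)) either already yields \eqref{multizerobddn} (hence \eqref{multizerobddn2} after homogenizing), or produces a monomial $\lambda_1=u_1^{m_1}u_2^{m_2}\in K_{\mathbf g}$ with $|m_1|+|m_2|$ bounded in terms of $\epsilon$; extending $(m_1,m_2)$ to a basis of $\mathbb Z^2$ via Lemma~\ref{main_lemma}, performing the corresponding unimodular change of variables, and invoking Lemma~\ref{special} (with $k=\mathcal M$, $k_0=K$) to produce the exceptional polynomial $H_1$, one is reduced to a \emph{one}-variable polynomial $P(X)=G_1(\lambda_1,X)\in K_{\mathbf g}[X]$ --- with no monomial or repeated factor once $H_1(\mathbf u)\not\equiv 0$ --- evaluated at the single meromorphic function $\beta:=u_1^{a_{21}}u_2^{a_{22}}$. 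Since $\beta$ is a monomial in $u_1,u_2$, its distinct zeros and poles lie among those of $u_1,u_2$, so it satisfies the hypothesis of Lemma~\ref{coprimeD}, which (for a one-variable polynomial, where the only possible obstructing relation $T_{\beta^m}(r)={\rm o}(T_{\beta}(r))$ forces $m=0$) gives that $P$ and $D_\beta(P)$ are coprime; the argument in the proof of Proposition~\ref{gcdn1} then yields $N_{\rm gcd}(P(\beta),D_\beta(P)(\beta),r)={\rm o}(T_{\mathbf g}(r))$. Combining this with the elementary fact that a zero of $P(\beta)$ of order $\ge 2$ is a zero of $D_\beta(P)(\beta)=P(\beta)'$ of order exactly one less, we get $N_{P(\beta)}(0,r)-N^{(1)}_{P(\beta)}(0,r)\le_{\rm exc}\epsilon T_{\mathbf g}(r)$; unwinding the monomial change of variables --- and using that $\lambda_1\in K_{\mathbf g}$ and $\beta$ contribute only ${\rm o}(T_{\mathbf g}(r))$ to the \emph{difference} $N-N^{(1)}$ --- gives the affine estimate, with $H:=H_1$, and the homogenization of the proof of Theorem~\ref{main_thm_1GExc} produces an exceptional $Z$ which is a finite union of closed sets of the shape $x_1^{n_1}x_2^{n_2}=\lambda$ with $\max\{|n_1|,|n_2|\}$ effectively bounded.

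Two points remain, the first of which I expect to be the real obstacle. Passing from the affine bound to \eqref{multizerobddn2} is not automatic, since $g_0$ may have zeros of large multiplicity: here one uses that, by the weakly general position, a zero of $G(\mathbf g)$ over a zero of $g_0$ forces $\mathbf g$ to meet the finite set $[G=0]\cap\{x_0=0\}$, which avoids the three coordinate points, so that near such a point $G(\mathbf g)$ may instead be read off a dehomogenization by a coordinate which does not vanish; the extra contribution is then either ${\rm o}(T_{\mathbf g}(r))$ or, when it is comparable to $T_{\mathbf g}(r)$, it forces --- by a Borel-type argument in the spirit of Lemma~\ref{Mborel1} --- a relation $u_1u_2^{-1}\in K_{\mathbf g}$, which is of the permitted form and may be absorbed into $Z$. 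Second, for the lower bound \eqref{truncate1_n2} one proceeds exactly as for the lower bound in the proof of Theorem~\ref{main_thm_1GExc}: after enlarging $Z$ by the loci on which some monomial subsum of $G(\mathbf g)$ vanishes, Theorem~\ref{trunborel} applied to the decomposition of $G(\mathbf g)$ --- in which the pure powers $x_0^{\deg G},x_1^{\deg G},x_2^{\deg G}$ occur by the weakly general position --- gives $\deg G\cdot T_{\mathbf g}(r)\le_{\rm exc}N_{G(\mathbf g)}(0,r)+{\rm o}(T_{\mathbf g}(r))$; this step sees the coordinates $g_i$ only through $N^{(2)}_{g_i^{\deg G}}(0,r)\le 2N^{(1)}_{\mathbf g}(H_i,r)={\rm o}(T_{\mathbf g}(r))$ and through monomial counting functions $N^{(2)}_{\mathbf g^{\mathbf i}}(0,r)\le 2\sum_jN^{(1)}_{g_j}(0,r)={\rm o}(T_{\mathbf g}(r))$, so the truncated hypothesis suffices. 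Combining with \eqref{multizerobddn2} yields $N^{(1)}_{G(\mathbf g)}(0,r)\ge_{\rm exc}(\deg G-\epsilon)T_{\mathbf g}(r)$.
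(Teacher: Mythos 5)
Your overall strategy matches the paper's: apply Theorem~\ref{prethm} under hypothesis~(b), and when the degenerate alternative $T_{u_1^{m_1}u_2^{m_2}}(r)={\rm o}(T_{\mathbf g}(r))$ occurs, perform a unimodular change of variables $(x_1,x_2)\mapsto(\Lambda,T)$, specialize $\Lambda$ to $\lambda:=u_1^{n_1}u_2^{n_2}\in K_{\mathbf g}$, and reduce to the one-variable polynomial $B(\lambda,T)$ and the single meromorphic function $\beta$; then coprimality comes from Lemma~\ref{coprimeD} (because $\beta$ is nonconstant) and the GCD bound from Proposition~\ref{gcdn1}. Your invocation of Lemma~\ref{special} to control the specializations $\Lambda\mapsto\lambda$ is essentially what the paper does by hand (via the constant term $B(\Lambda,0)$ and the resultant $R(B_\Lambda,B_\Lambda')$), and in one variable both give exceptional conditions of the monomial shape $x_1^{n_1}x_2^{n_2}=\lambda$.

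Two points diverge from the paper, one of them a genuine gap. First, you are right that the affine-to-projective transition is where things get delicate when only $N^{(1)}_{g_i}(0,r)={\rm o}(T_{\mathbf g}(r))$ is available: the paper does \emph{not} argue this from scratch but cites \cite[Proposition~5.3]{GW22}, which supplies a proper Zariski-closed $W$ (of the right monomial form) off which
\begin{align*}
N_{G(\mathbf g)}(0,r)-N^{(1)}_{G(\mathbf g)}(0,r)\le_{\exc} N_{\gcd}\bigl(\tilde B(\beta_0,\beta_1),\tilde D_\beta(B)(\beta_0,\beta_1),r\bigr),
\end{align*}
with $\tilde B,\tilde D_\beta(B)$ homogenized in two variables and $\beta=\beta_1/\beta_0$. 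Your heuristic that the bad contribution over zeros of $g_0$ either is ${\rm o}(T_{\mathbf g}(r))$ or forces $u_1u_2^{-1}\in K_{\mathbf g}$ is not a proof; one really needs something at the level of that proposition.

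Second, and this is the concrete gap: your handling of the lower bound \eqref{truncate1_n2} by ``enlarging $Z$ by the loci on which some monomial subsum of $G(\mathbf g)$ vanishes'' (i.e.\ adding hypersurfaces $\sum_{\mathbf i\in J}\alpha_{\mathbf i}\mathbf x^{\mathbf i}=0$ as in the proof of Theorem~\ref{main_thm_1GExc}) destroys exactly the structural conclusion that makes this theorem useful: those hypersurfaces are not of the form $x_1^{n_1}x_2^{n_2}=\lambda$, yet the statement demands that \emph{all} of $Z$ has that shape (it is this shape that is later used to argue that $Z$ can be avoided). The paper's argument is sharper and avoids any new component of $Z$: writing $B(\Lambda,T)=\sum_{i\in I_B}b_i(\Lambda)T^i$, let $\mathcal R\subset K$ be the zero set of all $b_i$ and all their proper subsums, and exclude $\lambda\in\mathcal R$ (again a monomial condition on $u_1,u_2$). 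For $\lambda\notin\mathcal R$, a vanishing proper subsum of \eqref{expressG} translates, after the change of variables, into a relation $\sum_{i}a_i(\lambda)\beta^i=0$ with at least two nonzero coefficients $a_i(\lambda)\in K(\lambda)\subset K_{\mathbf g}$, which forces $T_\beta(r)={\rm o}(T_{\mathbf g}(r))$, contradicting that $\beta$ is nonconstant. Hence no subsum of \eqref{expressG} can vanish once $\lambda\notin\mathcal R$, and Theorem~\ref{trunborel} applies as in the proof of Theorem~\ref{main_thm_1GExc}. This transfer of the subsum condition into a condition on $\lambda$ is the idea you are missing.
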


\begin{proof}[Proof of Theorem \ref{defect}]
Since $0\le \delta_{\mathbf{f}}(D_i)\le 1$ for $1\le i\le n+1$, it is clear that 
$\sum_{i=1}^{n+1}\delta_{\mathbf{f}} (D_i)=n+1$ if and only if $\delta_{\mathbf{f}} (D_i)=1$ for each $i$.
On the other hand, $\delta_{\mathbf{f}}(D_i)=1$ if and only if $N_{\mathbf{f}}(D_i,r)={\rm o}( T_{\mathbf{f}}(r))$.
Therefore, we have either $\sum_{i=1}^{n+1}\delta_{\mathbf{f}}(D_i)<n+1$ or  there exists a homogeneous polynomial $B\in K[x_0,\hdots,x_n]$ as described  Theorem \ref{GG_conj} such that $B(\mathbf{f})$ is identically zero. 

When $n=2$, the conclusion of Theorem \ref{GG_conj} holds under a weaker assumption that $N^{(1)}_{\mathbf{f}}(D_i,r)={\rm o}( T_{\mathbf{f}}(r))$ for $i=0,1,2$ by replacing the use of Theorem  \ref{main_thm_1GExc} with Theorem \ref{main_thm_n2}.  Therefore, the above arguments show that $\sum_{i=1}^{3}\delta^{(1)}_{\mathbf{f}}(D_i)<3$  or 
 there exists a homogeneous polynomial $B\in K[x_0,x_1,x_2]$ as described  Theorem \ref{GG_conj} such that $B(\mathbf{f})$ is identically zero. 

\end{proof}

\begin{proof}[Proof of Theorem \ref{main_thm_n2}] Let  $\mathbf{g}=(g_0,g_1,g_2)$ with $N^{(1)}_{g_i} (0,r) ={\rm o}(T_{\mathbf g}(r))$, $0 \le i \le 2$, where $g_0, g_1, g_2$ 
have no common zeros. 
We  prove \eqref{multizerobddn2} first.  Note that under our assumption, the condition (b) in Theorem \ref{prethm} holds. 
Hence, by Theorem \ref{prethm},  we only need to consider the case that 
\begin{align}\label{exception01}
		 T_{(\frac{g_1}{g_0})^{n_1}  (\frac{g_2}{g_0})^{n_2}}(r)={\rm o}( T_{\mathbf g}(r) )
\end{align}
holds. 

We may assume that  $n_1$ and $n_2$ are coprime. Consequently, there exist integers $a$ and $b$ such that $n_1a+n_2b=1$. 
Consider the variables
	\begin{align}\label{LamG}
		\Lambda=X^{n_{1}}  Y^{n_{2}}\qquad\text{and}\qquad  T=X^bY^{-a}.
	\end{align} 
Then, we may express
	\begin{align}\label{XY}
		X=\Lambda^a T^{n_2}\qquad\text{and}\qquad  Y=\Lambda^bT^{-n_1}.
	\end{align} 
 Let $G_1(X,Y)=G(1,X,Y)$.
Define   $B(\Lambda,T)\in K[\Lambda,T]$ as the polynomial with no monomial factors and such that  \begin{align}\label{GB}
		G_1(X,Y)=G_1(\Lambda^a T^{n_2},\Lambda^b T^{-n_1})= T^{M_1}\Lambda^{M_2}B(\Lambda,T)
	\end{align}
	for some integers $M_1$ and $M_2$.

Let $u_1=g_1/g_0$, $u_2=g_2/g_0$,  and $\lambda:=u_{1}^{n_{1}}  u_{2}^{n_{2}}$. 	 Then we have
\begin{align}\label{Tlambda}
		T_{\lambda}(r)={\rm o}( T_{\mathbf g}(r) ). 
	\end{align}

To prove \eqref{multizerobddn2},  we will reduce the problem to one-variable polynomials $B(\lambda,T)$ for all possible $\lambda\in K$ that satisfy \eqref{Tlambda} but not \eqref{multizerobddn2}.   Our objective is to eliminate those $\lambda$ values with $B(\lambda, T)$ containing a factor of $T$ or having repeated factors, so that we can apply the GCD theorem after eliminating those $\lambda$.
Since $T$ is not a factor of  $B(\Lambda,T)$, it follows that  $B(\Lambda,0)\in K[\Lambda]$ is not identically zero.  Consequently, there exist at most finite	$ \gamma_1,\dots,\gamma_s\in   K$ such that $B(\gamma_i,0)=0$ for $1\le i\le s.$ Therefore, $T$ is not a factor of $B(\lambda, T)$ if $\lambda\ne \gamma_i$, $1\le i\le s$.  
Regarding  repeated factors,  let's express $B(\Lambda,T)=B_{\Lambda}(T)\in K[\Lambda][T]$. 
Since the transformation in \eqref{LamG} establishes to a bijection between the sets $\{X^{t_1}Y^{t_2}: t_1,t_2\in\mathbb Z\}$ and $\{\Lambda^{a_1}T^{a_2}: a_1,a_2\in\mathbb Z\}$,    it is evident  that $B(\Lambda,T)\in K[\Lambda,T]$  is square free, given that $G$ is square-free. Consequently,  the resultant $R(B_{\Lambda},B_{\Lambda}')$ of $B_{\Lambda}$ and $B_{\Lambda}'(T)$ is a  polynomial in  $K[\Lambda]$, which is not identically zero.
Let 
	\begin{align}\label{zeroresultant}
		\text{$\alpha_i\in K$, $1\le i\le t$, be the zeros of  the resultant $R(B_{\Lambda},B_{\Lambda}')$.}
	\end{align}  
	It is clear that  $B(\lambda,T)$  has no multiple factors in $K[\lambda][T]$ if $\lambda\ne\alpha_i$ for any $1\le i\le t$.
Therefore, it is clear that we need to consider those $\lambda$ with  $\lambda\ne\alpha_i$ for any $1\le i\le t$  and $\lambda\ne\gamma_j$ for any $1\le j\le s$.  Assuming such, let $B(T):=\lambda^{M_2}B(\lambda,T)$ as in \eqref{GB}.
Let $\beta:=u_1^bu_2^{-a}$ and define $D_{ \beta }( B)\in  K_{\bf g}[T]$ as in \eqref{Duexpression}. 
By Lemma \ref{coprimeD}, the polynomials $B$ and $D_{ \beta }( B)$ are coprime in $K_{\bf g}[T]$.
 Let $\tilde B\in K(\lambda )[Z,U]$ and $\tilde D_{ \beta }( B)$ be the homogenization of  $B  $ and  $D_{ \beta }( B)$ respectively.    Write $\beta=\beta_1/\beta_0$, where $\beta_0$ and $\beta_1$ are entire functions without common zeros. 
 Then by Proposition \ref{gcdn1}
\begin{align}\label{n1} N_{\gcd} (\tilde B(\beta_0,\beta_1), \tilde  D_{\beta}(B)(\beta_0,\beta_1),r)\le{\rm o}( T_{\mathbf g}(r))
\end{align}
since $\beta$ is not constant.
 On the other hand, from the proof of  \cite[Proposition 5.3]{GW22}, there exists a proper Zariski closed set $W $ of $\mathbb P^2(\mathbb C)$, independent of $\mathbf{g}$, such that, if  image of $\mathbf{g}$ is  contained in $W$, 
\begin{align}\label{n2}		N_{G(\mathbf{g})}(0,r)-N_{G(\mathbf{g})}^{(1)}(0,r)\le_{\exc} N_{\gcd} (\tilde B(\beta_0,\beta_1),\tilde D_{ \beta }( B)(\beta_0,\beta_1),r).
\end{align}
Furthermore, $W$ can be described in Theorem \ref{main_thm_n2}.
We conclude the proof of \eqref{multizerobddn2} by combining (\ref{n1}) and (\ref{n2}).

 We now proceed to prove \eqref{truncate1_n2}.    
 Let $G=\sum_{\mathbf i\in I_G}\alpha_{\mathbf i}{\mathbf x}^{\mathbf i}\in K[ x_0,x_1, x_2]$.  Since the hypersurface $[G=0]$ and the coordinate hyperplanes  in $\mathbb P^2$ are in weakly general position, may write  
\begin{align}\label{expressG}
G(\mathbf g) = \sum_{0\le i\le 2}\alpha_{\mathbf i_i}g_i^d+\sum_{\mathbf i\in I_G\setminus I }\alpha_{\mathbf i}{\mathbf g}^{\mathbf i} 
\end{align}
where $\alpha_{\mathbf i_i}\ne 0$ for $0\le i\le n$ and $I=\{ (d,0,0),(0,d,0), (0,0,d)\}$.

 Let's express $B(\Lambda, T)$ in the form: 
\begin{align}\label{expressBT}
B(\Lambda, T)=\sum_{i\in I_B} b_i(\Lambda) T^{i}\in K[\Lambda][T],
\end{align} 
where $b_i\ne 0$ if $i\in I_B$.
We define $J\subset  K[\Lambda]$ as the finite set containing all $b_i(\Lambda)$ for $i\in I_B$ and all of their proper subsums.
Set $\mathcal R:=\{  r\in K\,|\, h(r)=0 \text{ for some  }h\in J\}$. 
 It is crucial to note that the proof of Theorem \ref{main_thm_1GExc}   has already demonstrated that \eqref{truncate1_n2} holds if neither $G(\mathbf g)$ nor any proper subsum of \eqref{expressG} is zero. 
Therefore, when evaluating $B(\Lambda, T)$ at $\Lambda=\lambda\notin \mathcal R$ and $T=\beta$, we need to consider equations of the following type:
\begin{align}\label{betaequation}
\sum_{i\in I_B} a_i(\lambda) \beta^{i}=0,
\end{align}
where $a_i(\Lambda)$ is a subsum of $b_i(\Lambda)$,  and there are at least two nontrivial  $ a_i$ in the left hand side of \eqref{betaequation} since $\lambda\notin \mathcal R$.
  Hence,  	$$
	T_{\beta}(r)\le c_3 T_{\lambda}(r)={\rm o}( T_{\mathbf{g}}(r)).$$
 This, however, leads to a contradiction.


\end{proof}

\end{document}